\documentclass[%draft,
a4paper,reqno]{amsart}
\usepackage[english]{babel}
\usepackage{amssymb,bbm,enumerate}
\usepackage{color}
\usepackage[linktocpage=true,colorlinks=true, linkcolor=blue, citecolor=red, urlcolor=green]{hyperref}
\usepackage[all]{xy}

\usepackage{float}
\restylefloat{table}

\usepackage{tikz}

\renewcommand{\ker}{\Ker}

\newcommand{\mc}[1]{\mathcal{#1}}
\newcommand{\mf}[1]{\mathfrak{#1}}
\newcommand{\mb}[1]{\mathbb{#1}}

\newcommand{\id}{\mathbbm{1}}

\DeclareMathOperator{\Mat}{Mat}
\DeclareMathOperator{\Hom}{Hom}
\DeclareMathOperator{\End}{End}

\DeclareMathOperator{\tr}{tr}

\DeclareMathOperator{\ad}{ad}
\DeclareMathOperator{\im}{Im}

\DeclareMathOperator{\Ker}{Ker}

\DeclareMathOperator{\gr}{gr}

\theoremstyle{plain}
\newtheorem{theorem}{Theorem}[section]
\newtheorem{lemma}[theorem]{Lemma}
\newtheorem{proposition}[theorem]{Proposition}
\newtheorem{corollary}[theorem]{Corollary}

\theoremstyle{definition}
\newtheorem{definition}[theorem]{Definition}
\newtheorem{example}[theorem]{Example}

\theoremstyle{remark}
\newtheorem{remark}[theorem]{Remark}

\setcounter{tocdepth}{1}

\numberwithin{equation}{section}

%%% Definizione commento
\definecolor{light}{gray}{.9}

%%%%%%%%%%%%%%%%%%%%%%%%%%%%%%%%%%%%%%%%%%%%%%%%%
%%%%%%%%%%%%%%%%%%%%%%%%%%%%%%%%%%%%%%%%%%%%%%%%%
\begin{document}

\title{A Lax type operator for quantum finite $W$-algebras}

\author{Alberto De Sole}
\address{Dipartimento di Matematica, Sapienza Universit\`a di Roma,
P.le Aldo Moro 2, 00185 Rome, Italy}
\email{desole@mat.uniroma1.it}
\urladdr{www1.mat.uniroma1.it/\$$\sim$\$desole}

\author{Victor G. Kac}
\address{Dept of Mathematics, MIT,
77 Massachusetts Avenue, Cambridge, MA 02139, USA}
\email{kac@math.mit.edu}

\author{Daniele Valeri}
\address{Yau Mathematical Sciences Center, Tsinghua University, 100084 Beijing, China}
\email{daniele@math.tsinghua.edu.cn}

%\subjclass{
%Primary 17B63; 
%Secondary 17B69, 17B80, 37K30, 17B08
%}

%%%%%%%%

\begin{abstract}
For a reductive Lie algebra $\mf g$, its nilpotent element $f$
and its faithful finite dimensional representation,
we construct a Lax operator $L(z)$
with coefficients in the quantum finite $W$-algebra $W(\mf g,f)$.
We show that for the classical linear Lie algebras 
$\mf{gl}_N$, $\mf{sl}_N$, $\mf{so}_N$ and $\mf{sp}_N$,
the operator $L(z)$ satisfies a generalized Yangian identity.
The operator $L(z)$ is a quantum finite analogue
of the operator of generalized Adler type
which we recently introduced in the classical affine setup.
As in the latter case, $L(z)$ is obtained as a generalized quasideterminant.
\end{abstract}

\keywords{
Quantum finite $\mc W$-algebra,
generalized quasideterminant,
twisted Yangians,
operators of twisted Yangian type,
Kazhdan filtration,
Rees algebra.
}

\maketitle

\tableofcontents

%%%%%%%%%%%%%%%%%%%%%%%%%%%%%%%%%%%%%%%
\section{Introduction}
\label{sec:1}

In our previous paper \cite{DSKV17},
for any nilpotent $N\times N$ matrix $f$
we introduced the following $r_1\times r_1$ matrix
with entries in $U(\mf g)((z^{-1}))$,
where $\mf g=\mf{gl}_N$ and $r_1$ is the multiplicity of the largest Jordan block of $f$:
\begin{equation}\label{vic:1.1}
\tilde{L}(z)
=
\big|z\id_N+f+\pi_{\leq\frac12}E+D\big|_{I_1J_1}
\,.
\end{equation}
Here $E=(e_{ji})_{i,j=1}^N$,
where $\{e_{ij}\}$ is the standard basis of the Lie algebra $\mf{gl}_N$,
$\pi_{\leq\frac12}$ is the projection on the $\leq\frac12$ part
of the $\ad x$-grading
\begin{equation}\label{vic:1.2}
\mf g
=
\oplus_{j\in\frac12\mb Z}\mf g_j
\,,
\end{equation}
associated to the $\mf{sl}_2$-triple $(f,2x,e)$,
$I_1\in\Mat_{N\times r_1}\mb F$, $J_1\in\Mat_{r_1\times N}\mb F$,
$D$ is a certain $N\times N$ diagonal matrix over $\mb F$, called the \emph{shift matrix},
and $|A|_{I_1J_1}=(J_1A^{-1}I_1)^{-1}$ denotes the $(I_1,J_1)$-quasideterminant 
of the invertible $N\times N$ matrix $A$ (over a unital associative algebra),
see \cite[Sec.4.1-4.3]{DSKV17} for details.
A discussion of generalized quasideterminants can be found in \cite[Sec.2.2]{DSKV17}.
In Section 2 of the present paper we give a coordinate free definition.

Let $I$ be the left ideal of $U(\mf g)$ generated by the set $\{m-(f|m)\,|\,m\in\mf g_{\geq1}\}$,
where $(\cdot\,|\,\cdot)$ is the trace form on $\mf g$,
and let $M=U(\mf g)/I$ be the corresponding left $U(\mf g)$-module;
it is also naturally a right $\mf g_{\geq\frac12}$-module.
Then, the subspace
$$
W(\mf g,f)
=
M^{\ad\mf g_{\geq\frac12}}
\,,
$$
has a natural structure of a unital associative algebra induced from that of $U(\mf g)$,
which is called the \emph{quantum finite} $W$-\emph{algebra} associated to 
the nilpotent element $f$ of $\mf g$.

Let
\begin{equation}\label{vic:1.3}
L(z)
=
\tilde{L}(z)\bar 1
\in\Mat_{r_1\times r_1}M((z^{-1}))
\,.
\end{equation}
The first main theorem of \cite{DSKV17} (Theorem 4.2) states that the matrix $L(z)$
has entries with coefficients in $W(\mf g, f)$,
and the second main theorem of \cite{DSKV17} (Theorem 4.3)
states that $L(z)$ is an operator of  Yangian type
for the algebra $W(\mf g,f)$.

Recall that for a unital associative algebra $R$ and a vector space $V$
the \emph{Yangian identity} for $A(z)\in R((z^{-1}))\otimes\End V$
is the following identity in $R[[z^{-1},w^{-1}]][z,w]\otimes\End V\otimes\End V$:
\begin{equation}\label{vic:1.4}
(z-w)[A(z),A(w)]
=
\Omega_V(A(w)\otimes A(z)-A(z)\otimes A(w))
\,,
\end{equation}
where $\Omega_V\in\End V\otimes\End V$
is the operator of permutation of factors.
This identity appeared in the famous talk of Drinfeld \cite{Dr86}
in the definition of the Yangian of $\mf{gl}_N$.
An operator $A(z)$ satisfying identity \eqref{vic:1.4}
is called an operator of \emph{Yangian type}.

Note that the Yangian identity is the finite quantum counterpart
of the Adler identity from the theory of classical affine $W$-algebras,
introduced in \cite{DSKV15},
building on the work of Adler \cite{Adl79},
and developed in our papers \cite{DSKV16a,DSKV16b}.
Note also that the mysterious shift matrix $D$ 
is a purely quantum effect, which does not appear in the classical situation.

The first main theorem of the present paper (Theorem \ref{thm:main1}) is a far reaching
generalization of the first main theorem of \cite{DSKV17}.
Namely, we replace $\mf{gl}_N$ by an arbitrary reductive Lie algebra $\mf g$,
and the standard representation of $\mf{gl}_N$ in $\mb F^N$
by an arbitrary faithful representation $\varphi$ of $\mf g$ 
in a finite dimensional vector space $V$.
We assume in addition that the trace form $(\cdot\,|\,\cdot)_V$
is non-degenerate (which automatically holds if $\mf g$ is semisimple).
To these data and a nilpotent element $f$ of $\mf g$,
we associate an analogue of the operator $\tilde{L}(z)$
defined by \eqref{vic:1.1},
and of the operator $L(z)$, defined by \eqref{vic:1.3},
by replacing the operators $E$ and $D$
by the operators $E_{\mf g,V}$ and $D_{\mf g,V}$ defined as follows.
Choose a basis $\{u_i\}_{i\in I}$ of $\mf g$
compatible with the grading \eqref{vic:1.2},
and let $\{u^i\}_{i\in I}$ be the dual basis of $\mf g$ w.r.t. the trace form.
Let
\begin{equation}\label{vic:1.5}
E_{\mf g,V}
=
\sum_{i\in I} u_i\otimes \varphi(u^i)
\in U(\mf g)\otimes\End V
\,,
\end{equation}
and
\begin{equation}\label{vic:1.6}
D_{\mf g,V}
=
-\sum_{i\in I, u_i\in\mf g_{\geq1}} \varphi(u^i)\varphi(u_i)
\in \End V
\,.
\end{equation}
Note that the shift operator $D$ of \cite{DSKV17}
was constructed by a rather complicated combinatorial procedure,
but it is easy to see that it coincides with $D_{\mf{gl}_N,\mb F^N}$.

Theorem \ref{thm:main1} states that the $r_1\times r_1$-matrix
$L(z)$ has entries with coefficients in $W(\mf g,f)$,
where $r_1$ is the dimension of the $\varphi(x)$-eigenspace in $V$
attached to the maximal eigenvalue.

Unfortunately, an analogue of the second main theorem of \cite{DSKV17}
does not appear to hold in such a generality.
In fact, for the second main theorem 
we need to assume that $\mf g$ is one of the classical Lie algebras 
$\mf{gl}_N$, $\mf{sl}_N$, $\mf{so}_N$ or $\mf{sp}_N$,
and $\varphi$ is its standard representation in $\mb F^N$.

We found that the Yangian identity \eqref{vic:1.4} for $\mf{gl}_N$,
which also holds for $\mf{sl}_N$,
should be generalized to the following $(\alpha,\beta,\gamma)$-\emph{Yangian identity}:
\begin{equation}\label{vic:1.7}
\begin{array}{l}
\displaystyle{
\vphantom{\Big(}
(z-w+\alpha\Omega_V)
(A(z)\otimes\id_V)
(z+w+\gamma-\beta\Omega_V^\dagger)
(\id_V\otimes A(w))
} \\
\displaystyle{
\vphantom{\Big(}
=
(\id_V\otimes A(w))
(z+w+\gamma-\beta\Omega_V^\dagger)
(A(z)\otimes\id_V)
(z-w+\alpha\Omega_V)
\,,}
\end{array}
\end{equation}
where $\alpha,\beta,\gamma\in\mb F$.
Here, if $\beta\neq0$,
we assume that $V$ is endowed with a non-degenerate symmetric or skewsymmetric bilinear form,
and $\Omega_V^\dagger$ is obtained by taking the adjoint (w.r.t. this form) of the first factor in $\Omega_V$.
Note that for $\alpha=1$, $\beta=\gamma=0$,
identity \eqref{vic:1.7} turns into \eqref{vic:1.4},
while for $\alpha=\beta=-1$, $\gamma=0$,
it turns into the RSRS presentation of the extended twisted Yangian 
of $\mf{so}_N$ and $\mf{sp}_N$, introduced by Olshanski \cite{Ols92}, see also \cite{Mol07}.

Our second main theorem (Theorem \ref{thm:main2})
states that for $\mf g=\mf{gl}_N$, $\mf{sl}_N$, $\mf{so}_N$ or $\mf{sp}_N$,
and $V=\mb F^N$,
the operator $L(z)$ satisfies the $(\alpha,\beta,\gamma)$-Yangian identity
with $(\alpha,\beta,\gamma)=(1,0,0)$ for $\mf g=\mf{gl}_N$ or $\mf{sl}_N$,
and $(\alpha,\beta,\gamma)=(\frac12,\frac12,\frac{\epsilon-N+r_1}2)$,
where $\epsilon=+1$, (espectively $-1$),
if $\mf g=\mf {so}_N$ (resp. $\mf {sp}_N$).

The classical affine analogues of both the main Theorems \ref{thm:main1} and \ref{thm:main2}
have been established in our recent paper \cite{DSKV18}.
These results led, in the context of classical affine $W$-algebras,
to a large class of integrable hierarchies
of Hamiltonian equations of Lax type,
encompassing all Drinfeld-Sokolov hierarchies attached 
to classical affine Lie algebras \cite{DS85}.

Throughout the paper the base field $\mb F$ is a field of characteristic zero.

\subsubsection*{Acknowledgments} 

We are grateful to the referee for careful reading of the paper,
for several corrections and various enlightening observations.
The first author would like to acknowledge
the hospitality of MIT, where he was hosted during the spring semester of 2017,
when this project started.
The second author would like to acknowledge
the hospitality of the University of Rome La Sapienza
during his visit in Rome in January 2017.
The third author is grateful to the University of Rome La Sapienza
for its hospitality during his several visits in 2016 and 2017.
All three authors are extremely grateful to IHES
for their kind hospitality during the summer of 2017,
when the paper was completed.
The first author is supported by National FIRB grant RBFR12RA9W,
National PRIN grant 2015ZWST2C,
and University grant C26A158K8A,
the second author was supported by an NSF grant.

%%%%%%%%%%%%%%%%%%%%%%%%%%%%%%%%%%%%%%%
\section{Dirac reduction and generalized quasideterminants in linear algebra}
\label{sec:2}

Let $R$ be a unital associative algebra over $\mb F$
and let 
\begin{equation}\label{eq:linalg1}
\chi_\alpha\,:\,\,
0\to U_\alpha
\stackrel{\Psi_\alpha}{\longrightarrow} 
V_\alpha
\stackrel{\Pi_\alpha}{\longrightarrow} 
W_\alpha\to0
\,,\,\,
\alpha=1,2
\,,
\end{equation}
be two short exact sequences of $R$-modules.

%%%
\subsection{Dirac reduction}
\label{sec:2.1}

Let $A:\,V_1\to V_2$ be an $R$-module endomorphism.
If the following conditions are met:
\begin{equation}
\label{eq:linalg2}
%\!\!\!\!\!\!\!\!\!\!\!\!\!\!\!\!\!\!\!\!\!\!\!\!\!\!\!\!\!\!\!\!\!\!\!\!\!\!\!\!\!\!\!\!\!\!\!\!\!\!\!\!\!\!
\text{(i) }\,
\im\Psi_1=\ker\Pi_1\subset\ker A
\,\,\,,\qquad\qquad\qquad
\text{ (ii) }\,
\im A\subset \im\Psi_2=\ker\Pi_2
\,,
\end{equation}
then, 
%by the first isomorphism theorem of linear algebra,
we have the canonically induced $R$-module homomorphism
$\bar A:\,W_1\to U_2$ making the following diagram commute:
\begin{equation}\label{eq:linalg3}
\UseTips
\xymatrix{
V_1
\ar[d]_{\Pi_1}
\ar[rr]^{A}
& &
V_2  \\
% 2
W_1
\ar[rr]_{\bar A\,=\,\Psi_2^{-1}A\Pi_1^{-1}}
& &
\ar[u]^{\Psi_2}
U_2
}
\end{equation}
If, on the other hand, conditions (i) and (ii) are not met,
we can still induce a well defined $R$-module 
homomorphism $W_1\to U_2$,
at the price of ``Dirac modifying'' the endomorphism $A$.
This can be done provided that
\begin{equation}\label{eq:linalg4}
\Pi_2 A\Psi_1\,:\,\, U_1\longrightarrow W_2
\,\,\text{ is an isomorphism }
\,.
\end{equation}
In this case, we define the \emph{Dirac modified} $R$-module homomorphism
\begin{equation}\label{eq:linalg5}
A^D_{\chi_1,\chi_2}
:=
A-A\Psi_1(\Pi_2 A\Psi_1)^{-1}\Pi_2 A
\,\in\Hom_R(V_1,V_2)
\,.
\end{equation}
\begin{lemma}\label{lem:linalg1}
Assume that condition \eqref{eq:linalg4} holds.
Then, the Dirac modified homomorphism $A^D_{\chi_1,\chi_2}:\,V_1\to V_2$
is well defined and it satisfies conditions (i) and (ii) in \eqref{eq:linalg2}.
Hence, we get an induced $R$-module homomorphism
\begin{equation}\label{eq:linalg6}
\bar A^D_{\chi_1,\chi_2}
=
\Psi_2^{-1}\big(A-A\Psi_1(\Pi_2 A\Psi_1)^{-1}\Pi_2 A\big)\Pi_1^{-1}
\,:\,\,W_1\to U_2
\,,
\end{equation}
that we call the \emph{Dirac reduction} of $A$ w.r.t. the short exact sequences $\chi_1$ and $\chi_2$.
\end{lemma}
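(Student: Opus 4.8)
The plan is to verify directly that $A^D_{\chi_1,\chi_2}$ satisfies conditions (i) and (ii) of \eqref{eq:linalg2}, since everything else (well-definedness of the formula, and the construction of $\bar A^D_{\chi_1,\chi_2}$ via the commutative diagram \eqref{eq:linalg3}) follows formally once those two conditions hold. Note first that $A^D_{\chi_1,\chi_2}$ is a genuine element of $\Hom_R(V_1,V_2)$: the only potentially problematic ingredient is $(\Pi_2 A\Psi_1)^{-1}$, which exists as an $R$-module isomorphism $W_2\to U_1$ precisely by hypothesis \eqref{eq:linalg4}, and all the other maps $A,\Psi_1,\Pi_2$ are $R$-module homomorphisms, so the composite and difference are too.

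For condition (i), I need $\im\Psi_1\subset\ker A^D_{\chi_1,\chi_2}$, i.e.\ $A^D_{\chi_1,\chi_2}\Psi_1=0$. Computing from \eqref{eq:linalg5},
\[
A^D_{\chi_1,\chi_2}\Psi_1
=
A\Psi_1-A\Psi_1(\Pi_2 A\Psi_1)^{-1}(\Pi_2 A\Psi_1)
=
A\Psi_1-A\Psi_1
=
0
\,,
\]
where I used that $(\Pi_2 A\Psi_1)^{-1}$ is a two-sided inverse (it is an isomorphism by \eqref{eq:linalg4}). Since $\im\Psi_1=\ker\Pi_1$ by exactness of $\chi_1$, this gives $\ker\Pi_1\subset\ker A^D_{\chi_1,\chi_2}$, which is (i). For condition (ii), I need $\im A^D_{\chi_1,\chi_2}\subset\im\Psi_2=\ker\Pi_2$, and by exactness of $\chi_2$ it is equivalent to check $\Pi_2 A^D_{\chi_1,\chi_2}=0$. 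Again from \eqref{eq:linalg5},
\[
\Pi_2 A^D_{\chi_1,\chi_2}
=
\Pi_2 A-(\Pi_2 A\Psi_1)(\Pi_2 A\Psi_1)^{-1}\Pi_2 A
=
\Pi_2 A-\Pi_2 A
=
0
\,,
\]
using the two-sided inverse once more. This establishes (ii).

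Having verified (i) and (ii), the discussion preceding the lemma (the construction around diagram \eqref{eq:linalg3}) applies verbatim to $A^D_{\chi_1,\chi_2}$ in place of $A$: condition (i) guarantees that $A^D_{\chi_1,\chi_2}$ descends through $\Pi_1$ to a map out of $W_1=V_1/\ker\Pi_1$, and condition (ii) guarantees that its image lands in $\im\Psi_2$, so that we may factor through $\Psi_2$ to obtain a well-defined $R$-module homomorphism $W_1\to U_2$; this is exactly the map written in \eqref{eq:linalg6}, namely $\Psi_2^{-1}A^D_{\chi_1,\chi_2}\Pi_1^{-1}$, with $\Pi_1^{-1}$ and $\Psi_2^{-1}$ understood as in \eqref{eq:linalg3}. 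The argument is entirely formal, so there is no real obstacle; the only point requiring a moment's care is the bookkeeping that $(\Pi_2 A\Psi_1)^{-1}$, being the inverse of an isomorphism, cancels $\Pi_2 A\Psi_1$ on both the left and the right, which is what makes both cancellations above legitimate.
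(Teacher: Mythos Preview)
Your proof is correct and follows the same approach as the paper: reducing conditions (i) and (ii) to the equations $A^D_{\chi_1,\chi_2}\Psi_1=0$ and $\Pi_2 A^D_{\chi_1,\chi_2}=0$, and verifying these directly from the formula \eqref{eq:linalg5}. The paper's proof is terser (it simply states these two equations ``can be immediately checked''), but your version spells out exactly the same computation.
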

\begin{proof}
Conditions (i) and (ii) are equivalent, respectively, to the equations
$$
A^D_{\chi_1,\chi_2}\Psi_1=0
\,\,\text{ and }\,\,
\Pi_2A^D_{\chi_1,\chi_2}=0\,,
$$
which can be immediately checked.
\end{proof}
\begin{remark}\label{rem:linalg2}
Let $\mc V$ be a Poisson algebra.
Recall that, 
given a set of elements $\theta_1,\dots,\theta_r\in\mc V$ (constraints)
one defines the Dirac reduced Poisson algebra structure 
on the algebra $\mc V/\langle\theta_i\rangle_{i=1}^r$
by a well defined Poisson bracket
\begin{equation}\label{eq:linalg9}
\{f,g\}^D
=
\{f,g\}-\sum_{i,j=1}^r\{f,\theta_i\}(S^{-1})_{ij}\{\theta_j,g\}
\,,
\end{equation}
where $S$ is the matrix with entries
$S_{ij}=\{\theta_i,\theta_j\}$,
and is assumed to be invertible.
In terms of the corresponding Poisson structure $H:\,\mc V^\ell\to\mc V^\ell$
($\ell$ being the number of independent variables)
formula \eqref{eq:linalg9} becomes
\begin{equation}\label{eq:linalg9b}
H^D(F)
=
H(F)-\sum_{i,j=1}^r H(\nabla\theta_i)(S^{-1})_{ij}\nabla\theta_j\cdot H(F)
\,,
\end{equation}
which is a special case of \eqref{eq:linalg5}
(with the following data: $V_1=\mc V\otimes\mc V^\ell$;
$V_2=\mc V^\ell$;
$A(g\otimes F)=gH(F)$;
$\Psi_1:\,\mc V^r\to\mc V\otimes\mc V^\ell,\,\big(g_i\big)_{i=1}^r\mapsto\sum_ig_i\otimes\nabla\theta_i$;
$\Pi_2:\,\mc V^\ell\to \mc V^r,\,F\to\big(F\cdot\nabla\theta_i\big)_{i=1}^r$).
This is the reason for naming $\bar A^D_{\chi_1,\chi_2}$ the ``Dirac reduction'' of $A$.
\end{remark}

%%%
\subsection{Generalized quasideterminant}
\label{sec:2.2}

The \emph{generalized quasideterminant} 
of the $R$-module homomorphism $A:\,V_1\to V_2$
with respect to the the maps $\Psi_2$ and $\Pi_1$ in \eqref{eq:linalg1}, 
is the $R$-module homomorphism (cf. \cite{DSKV16a})
\begin{equation}\label{eq:linalg7}
|A|_{\Psi_2,\Pi_1}
:=
\big(\Pi_1 A^{-1}\Psi_2\big)^{-1}
\,:\,\,W_1\to U_2
\,,
\end{equation}
provided that it exists, i.e. provided that $A:\,V_1\to V_2$ is invertible,
and that 
\begin{equation}\label{eq:linalg8}
\Pi_1 A^{-1}\Psi_2:\,U_2\to W_1
\,\,\text{ is invertible }
\,.
\end{equation}
\begin{remark}\label{rem:linalg3}
Recall that, given a square matrix $A\in\Mat_{N\times N}R$,
its $(i,j)$-quasi-determinant, if it exists, is defined as \cite{GGRW05}
$$
|A|_{ij}=\big((A^{-1})_{ji}\big)^{-1}
\,\in R
\,.
$$
In \cite{DSKV16a} we generalized this notion as follows:
given rectangular matrices $I\in\Mat_{N\times M}\mb F$ and $J\in\Mat_{M\times N}\mb F$,
the $(I,J)$-quasideterminant of $A$, if it exists, is defined as
$$
|A|_{IJ}=\big(JA^{-1}I\big)^{-1}
\,\in\Mat_{M\times M} R
\,.
$$
Obviously, \eqref{eq:linalg7} provides a further generalization of the notion 
of quasideterminant, hence the name ``generalized quasideterminant''.
\end{remark}

%%%
\subsection{Dirac reduction as a generalized quasideterminant}
\label{sec:2.3}

\begin{proposition}\label{prop:linalg}
Suppose that the $R$-module homomorphism $A:\,V_1\to V_2$
is invertible.
Then, 
the Dirac reduction $\bar A^D_{\chi_1,\chi_2}:\,W_1\to U_2$ exists,
i.e. \eqref{eq:linalg4} holds,
if and only if the generalized quasideterminant $|A|_{\Psi_2,\Pi_1}:\,W_1\to U_2$ exists,
i.e. \eqref{eq:linalg8} holds,
and, in this case, they coincide: $\bar A^D_{\chi_1,\chi_2}=|A|_{\Psi_2,\Pi_1}$.
\end{proposition}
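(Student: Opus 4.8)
The plan is to prove a single statement that yields both halves of the Proposition at once: \emph{assuming $A:\,V_1\to V_2$ invertible, whenever the Dirac-modified map $\bar A^D_{\chi_1,\chi_2}:\,W_1\to U_2$ is defined, it is a two-sided inverse of $\Pi_1A^{-1}\Psi_2:\,U_2\to W_1$}. Indeed, once this is known, condition \eqref{eq:linalg4} (which by Lemma \ref{lem:linalg1} is what makes $\bar A^D_{\chi_1,\chi_2}$ well defined) forces $\Pi_1A^{-1}\Psi_2$ to be invertible, i.e.\ \eqref{eq:linalg8} holds, and then $|A|_{\Psi_2,\Pi_1}=(\Pi_1A^{-1}\Psi_2)^{-1}=\bar A^D_{\chi_1,\chi_2}$; the only other thing needed is the reverse implication \eqref{eq:linalg8}$\,\Rightarrow\,$\eqref{eq:linalg4}, which I would get by a symmetry argument rather than a second computation.

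First I would assume \eqref{eq:linalg4}, so that $\bar A^D_{\chi_1,\chi_2}=\Psi_2^{-1}A^D_{\chi_1,\chi_2}\Pi_1^{-1}$ is well defined, with $A^D_{\chi_1,\chi_2}=A-A\Psi_1(\Pi_2A\Psi_1)^{-1}\Pi_2A$. To evaluate $\bar A^D_{\chi_1,\chi_2}\circ(\Pi_1A^{-1}\Psi_2)$ on $u\in U_2$, the point is that $A^{-1}\Psi_2u\in V_1$ is a convenient lift of $\Pi_1A^{-1}\Psi_2u$ along $\Pi_1$, so that $\bar A^D_{\chi_1,\chi_2}(\Pi_1A^{-1}\Psi_2u)=\Psi_2^{-1}\big(A^D_{\chi_1,\chi_2}A^{-1}\Psi_2u\big)$; expanding $A^D_{\chi_1,\chi_2}A^{-1}\Psi_2u=\Psi_2u-A\Psi_1(\Pi_2A\Psi_1)^{-1}\Pi_2\Psi_2u$ and using exactness $\Pi_2\Psi_2=0$ leaves $\Psi_2u$, so the composite is $\id_{U_2}$. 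For $(\Pi_1A^{-1}\Psi_2)\circ\bar A^D_{\chi_1,\chi_2}$ on $w\in W_1$, pick any $v\in V_1$ with $\Pi_1v=w$; then $A^{-1}A^D_{\chi_1,\chi_2}v=v-\Psi_1(\Pi_2A\Psi_1)^{-1}\Pi_2Av$, and applying $\Pi_1$ together with $\Pi_1\Psi_1=0$ returns $\Pi_1v=w$. This proves the core claim, hence \eqref{eq:linalg8} and the equality.

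For the converse I would invoke the manifest symmetry of the data under $A\mapsto A^{-1}$ and $\chi_1\leftrightarrow\chi_2$: condition \eqref{eq:linalg4} for $(A^{-1};\chi_2,\chi_1)$ reads ``$\Pi_1A^{-1}\Psi_2$ is an isomorphism'', which is exactly \eqref{eq:linalg8} for $(A;\chi_1,\chi_2)$; applying the first part to $(A^{-1};\chi_2,\chi_1)$ then gives ``$\Pi_2A\Psi_1$ is an isomorphism'', i.e.\ \eqref{eq:linalg4}. There is no real obstacle in any of this — it is a short formal computation resting only on $\Pi_\alpha\Psi_\alpha=0$ and the invertibility of $A$; the two places to be careful are (i) reading the symbol $\Pi_1^{-1}$ in \eqref{eq:linalg6} as ``choose an arbitrary $\Pi_1$-lift'', legitimate by Lemma \ref{lem:linalg1}, which is why one should exhibit $A^{-1}\Psi_2u$ explicitly as the lift, and (ii) not recomputing the reverse implication but deducing it from the $A\leftrightarrow A^{-1}$ symmetry. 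As an alternative, one can first observe that each of \eqref{eq:linalg4} and \eqref{eq:linalg8} is equivalent to the internal direct sum decomposition $V_1=\Psi_1(U_1)\oplus A^{-1}\Psi_2(U_2)$, after which the equivalence of conditions is transparent and only the formula for the reduced map needs the computation above.
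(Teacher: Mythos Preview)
Your proof is correct and the core computation---showing that $\bar A^D_{\chi_1,\chi_2}$ is a two-sided inverse of $\Pi_1A^{-1}\Psi_2$ using $\Pi_1\Psi_1=0$ and $\Pi_2\Psi_2=0$---is exactly what the paper does, only you write it element-wise while the paper writes it as an operator identity. The one organizational difference is in the equivalence of \eqref{eq:linalg4} and \eqref{eq:linalg8}: the paper establishes this first and directly, by observing that invertibility of $\Pi_2A\Psi_1$ is equivalent to $V_2=A(\im\Psi_1)\oplus\im\Psi_2$, then applying $A^{-1}$ to get $V_1=\im\Psi_1\oplus A^{-1}(\im\Psi_2)$, which is invertibility of $\Pi_1A^{-1}\Psi_2$; you instead extract one implication from the inverse computation and get the other by the $A\leftrightarrow A^{-1}$, $\chi_1\leftrightarrow\chi_2$ symmetry. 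Your alternative remark at the end is precisely the paper's argument, so the two routes converge.
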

\begin{proof}
Since $A$ is invertible, 
to say that $\Pi_2A\Psi_1:\, U_1\to W_2$ is invertible is equivalent to the conditions
$$
A(\im\Psi_1)\cap\im\Psi_2=0 
\,\,\text{ and }\,\, 
V_2=A(\im\Psi_1)+\im\Psi_2
\,.
$$
Applying $A^{-1}$ to both these equalities, we get
$$
A^{-1}(\im\Psi_2)\cap\im\Psi_1=0 \,\,\text{ and }\,\, V_1=A^{-1}(\im\Psi_2)+\im\Psi_1
\,,
$$
which is equivalent to saying that $\Pi_1A^{-1}\Psi_2:\, U_2\to W_1$ is invertible.
This proves the first statement.
We are left to prove the equation $\bar A^D_{\chi_1,\chi_2}=|A|_{\Psi_2,\Pi_1}$.
By definition, we have
$$
\begin{array}{l}
\displaystyle{
\vphantom{\Big(}
\Pi_1A^{-1}\Psi_2\bar A^D_{\chi_1,\chi_2}
=
\Pi_1A^{-1}\Psi_2
\Psi_2^{-1}\big(A-A\Psi_1(\Pi_2 A\Psi_1)^{-1}\Pi_2 A\big)\Pi_1^{-1}
} \\
\displaystyle{
\vphantom{\Big(}
=
\Pi_1\big(\id_V-\Psi_1(\Pi_2 A\Psi_1)^{-1}\Pi_2 A\big)\Pi_1^{-1}
=
\Pi_1\Pi_1^{-1}
=\id_{W_1}
\,,}
\end{array}
$$
since $\Pi_1\Psi_1=0$.
Hence, $\bar A^D_{\chi_1,\chi_2}$ is a right inverse of $\Pi_1A^{-1}\Psi_2$.
A similar computation shows that 
$\bar A^D_{\chi_1,\chi_2}$ is a left inverse of $\Pi_1A^{-1}\Psi_2$ as well,
proving the claim.
\end{proof}

%%%%%%%%%%%%%%%%%%%%%%%%%%%%%%%%%%%%%%%
\section{Review of finite \texorpdfstring{$W$}{W}-algebras}
\label{sec:3}

Let $\mf g$ be a reductive Lie algebra with a non-degenerate symmetric 
invariant bilinear form $(\cdot\,|\,\cdot)$.
Let $f\in\mf g$ be a nilpotent element;
by the Jacobson-Morozov Theorem, it can be included in an $\mf{sl}_2$-triple
$\{f,2x,e\}\subset\mf g$.
We have the corresponding $\ad x$-eigenspace decomposition
\begin{equation}\label{eq:grading}
\mf g=\bigoplus_{k\in\frac{1}{2}\mb Z}\mf g_{k}
\,\,\text{ where }\,\,
\mf g_k=\big\{a\in\mf g\,\big|\,[x,a]=ka\big\}
\,,
\end{equation}
so that $f\in\mf g_{-1}$, $x\in\mf g_{0}$ and $e\in\mf g_{1}$.
We shall denote, for $j\in\frac12\mb Z$, $\mf g_{\geq j}=\oplus_{k\geq j}\mf g_k$,
and similarly $\mf g_{\leq j}$.

A key role in the theory of $W$-algebras is played by the left ideal
\begin{equation}\label{0225:eq4}
J=
U(\mf g)\big\langle m-(f|m) \big\rangle_{m\in\mf g_{\geq1}}
\subset U(\mf g)
\,,
\end{equation}
and the corresponding left $\mf g$-module 
\begin{equation}\label{eq:M}
M=U(\mf g)/J\,.
\end{equation}
We shall denote by $\bar 1\in M$ the image of $1\in U(\mf g)$
in the quotient space.
Note that, by definition, $g\bar1=0$ if and only if $g\in J$.
\begin{lemma}[{\cite[Lem.3.1]{DSKV17}}]\label{0303:lem4}
\begin{enumerate}[(a)]
\item
$U(\mf g)JU(\mf g_{\geq\frac12})\subset J$.
\item
The Lie algebra $\mf g$ acts on the module $M$ by left multiplication,
and its subalgebra $\mf g_{\geq\frac12}$ 
acts on $M$ both by left and by right multiplication
(hence, also via adjoint action).
\end{enumerate}
\end{lemma}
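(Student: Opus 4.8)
The plan is to reduce part (a) to one bracket computation and then read off part (b) formally. Since $J$ is by construction a left ideal of $U(\mf g)$, we have $U(\mf g)J=J$, so it remains to show that $J$ is stable under right multiplication by $U(\mf g_{\geq\frac12})$. As $U(\mf g_{\geq\frac12})$ is generated as an associative algebra by $\mf g_{\geq\frac12}$, an easy induction on the number of factors reduces this to the claim that $Jn\subset J$ for every $n\in\mf g_{\geq\frac12}$.

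To prove this I would test it on the spanning set of $J$, namely the elements $a\big(m-(f|m)\big)$ with $a\in U(\mf g)$ and $m\in\mf g_{\geq1}$ (the latter using that $m\mapsto m-(f|m)$ is linear). Commuting $n$ to the left past $m$ gives
\[
a\big(m-(f|m)\big)n
=
an\big(m-(f|m)\big)
+a\big([m,n]-(f|[m,n])\big)
+(f|[m,n])\,a
\,.
\]
The first summand lies in $U(\mf g)\big(m-(f|m)\big)\subset J$. For the other two, note that $[m,n]\in[\mf g_{\geq1},\mf g_{\geq\frac12}]\subset\mf g_{\geq\frac32}\subset\mf g_{\geq1}$, so $[m,n]-(f|[m,n])$ is again one of the generators of $J$ and the middle summand is in $J$; and $(f|[m,n])=0$, since the invariant form pairs $\mf g_i$ with $\mf g_j$ trivially unless $i+j=0$, while $f\in\mf g_{-1}$ and $[m,n]$ has components only in degrees $\geq\frac32$. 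Hence $a\big(m-(f|m)\big)n\in J$, which proves (a).

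Part (b) then follows formally. The left $\mf g$-action on $M=U(\mf g)/J$ is immediate because $J$ is a left ideal. Right multiplication by $n\in\mf g_{\geq\frac12}$ descends to $M$ precisely because $Jn\subset J$ by (a), giving the right $\mf g_{\geq\frac12}$-module structure. Combining the two, for $n\in\mf g_{\geq\frac12}$ the operator $\ad n:=L_n-R_n$ (with $L_n$ and $R_n$ denoting left and right multiplication) is well defined on $M$, and $n\mapsto\ad n$ is a Lie algebra action since $[L_m,L_n]=L_{[m,n]}$, $[R_m,R_n]=-R_{[m,n]}$ and $[L_m,R_n]=0$ together give $[\ad m,\ad n]=\ad[m,n]$.

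The only step that is not pure bookkeeping is the vanishing of the scalar term $(f|[m,n])$: this is exactly where the definition of $J$ via $\mf g_{\geq1}$ (rather than $\mf g_{\geq\frac12}$), together with the weight-homogeneity of $(\cdot\,|\,\cdot)$, is used, and it is what makes right multiplication by $\mf g_{\geq\frac12}$ — but not by all of $\mf g_{\geq0}$ — well defined on $M$; everything else is routine manipulation with the PBW basis and the bracket.
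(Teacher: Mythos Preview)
Your argument is correct: the reduction to showing $Jn\subset J$ for $n\in\mf g_{\geq\frac12}$ via the commutator identity, together with the observation that $[m,n]\in\mf g_{\geq\frac32}$ forces $(f|[m,n])=0$, is exactly the right computation, and part (b) follows formally as you say. The present paper does not reprove this lemma but only cites it from \cite{DSKV17}; the argument there is the same standard one you have written.
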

Consider the subspace
\begin{equation}\label{0225:eq3}
\widetilde{W}
:=
\big\{
w\in U(\mf g)\,\big|\,
[a,w]\bar 1=0\,\text{ in }\, M\,,\,\,\text{for all } a\in\mf g_{\geq\frac12}
\big\}
\,\subset U(\mf g)\,.
\end{equation}
\begin{lemma}[{\cite[Lem.3.2]{DSKV17}}]\phantomsection\label{0225:prop1a}
\begin{enumerate}[(a)]
\item
$J\subset\widetilde{W}$.
\item
For $h\in J$ and $w\in\widetilde{W}$, we have $hw\in J$.
\item
$\widetilde{W}$ 
is a subalgebra of $U(\mf g)$.
\item
$J$ is a (proper) two-sided ideal of $\widetilde{W}$.
\end{enumerate}
\end{lemma}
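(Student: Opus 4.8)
The plan is to prove the four assertions in the order given, noting that (c) and (d) become purely formal once (a) and (b) are in hand. Throughout I will use, without further comment, that by the very definition of the left ideal $J$ one has $g\bar 1=0$ in $M$ if and only if $g\in J$, and that $J$ is spanned as a vector space by the elements $g\big(m-(f|m)\big)$ with $g\in U(\mf g)$ and $m$ running over a basis of $\mf g_{\geq1}$.

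For part (a), I would take $h\in J$ and $a\in\mf g_{\geq\frac12}$ and check directly that $[a,h]=ah-ha\in J$: the term $ah$ lies in $J$ because $J$ is a left ideal of $U(\mf g)$, and the term $ha$ lies in $J$ by Lemma \ref{0303:lem4}(a), since $h\in J$ and $a\in\mf g_{\geq\frac12}\subset U(\mf g_{\geq\frac12})$. Hence $[a,h]\bar 1=0$ for all such $a$, i.e. $h\in\widetilde{W}$.

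For part (b), fix $w\in\widetilde{W}$; by linearity it suffices to show $g\big(m-(f|m)\big)w\in J$ for all $g\in U(\mf g)$ and $m\in\mf g_{\geq1}$. The point is the straightening identity $\big(m-(f|m)\big)w=w\big(m-(f|m)\big)+[m,w]$ in $U(\mf g)$, which gives
\[
g\big(m-(f|m)\big)w \;=\; gw\big(m-(f|m)\big)\;+\;g[m,w].
\]
The first summand lies in $J$ by definition of $J$; for the second, since $m\in\mf g_{\geq1}\subset\mf g_{\geq\frac12}$ and $w\in\widetilde{W}$ we have $[m,w]\in J$, hence $g[m,w]\in J$. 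This proves (b).

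Parts (c) and (d) then follow formally. For (c): $1\in\widetilde{W}$ is immediate, and for $w_1,w_2\in\widetilde{W}$ and $a\in\mf g_{\geq\frac12}$ the Leibniz rule $[a,w_1w_2]=[a,w_1]w_2+w_1[a,w_2]$, applied to $\bar 1$, has vanishing second term (as $[a,w_2]\in J$ and $J$ is a left ideal) and vanishing first term (as $[a,w_1]\in J$, $w_2\in\widetilde{W}$, so $[a,w_1]w_2\in J$ by (b)); hence $w_1w_2\in\widetilde{W}$. For (d): $J\cdot\widetilde{W}\subset J$ is exactly (b), $\widetilde{W}\cdot J\subset U(\mf g)\cdot J\subset J$ since $J$ is a left ideal, and $J\subsetneq\widetilde{W}$ because $1\in\widetilde{W}$ while $1\notin J$ (the module $M$ being nonzero). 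I expect the only step requiring any thought to be (b) — specifically choosing the right straightening of $(m-(f|m))w$ and remembering to reduce to the generators of $J$ before commuting; everything else is bookkeeping with the left-ideal property and Lemma \ref{0303:lem4}(a).
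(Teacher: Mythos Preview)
Your proof is correct. Note, however, that the paper does not actually supply its own proof of this lemma: it is quoted verbatim as \cite[Lem.~3.2]{DSKV17} and left without argument, so there is no in-paper proof to compare against. Your argument is the standard one and matches what one finds in the cited reference: use Lemma~\ref{0303:lem4}(a) for (a), straighten $(m-(f|m))w$ for (b), and deduce (c), (d) formally from (a), (b) and the left-ideal property of $J$.
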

\begin{proposition}[{\cite[Prop.3.3]{DSKV17}}]\label{0225:prop1}
The quotient
\begin{equation}\label{0225:eq8}
W(\mf g,f)
=
M^{\ad\mf g_{\geq\frac12}}
=
\widetilde{W}/J
\end{equation}
has a natural structure of a unital associative algebra,
induced by that of $U(\mf g)$.
\end{proposition}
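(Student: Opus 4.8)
The plan is to deduce everything from the two lemmas immediately preceding the statement, since the real work is already encapsulated there. First I would establish the vector space identification $M^{\ad\mf g_{\geq\frac12}}=\widetilde W/J$. By Lemma~\ref{0303:lem4}(b) the subalgebra $\mf g_{\geq\frac12}$ acts on $M$ both by left and by right multiplication, the latter being well defined because $JU(\mf g_{\geq\frac12})\subset J$ by Lemma~\ref{0303:lem4}(a); hence for $a\in\mf g_{\geq\frac12}$ and $w\in U(\mf g)$ one computes $\ad a(w\bar1)=aw\bar1-w\bar1a=\overline{aw}-\overline{wa}=[a,w]\bar1$ in $M$. Therefore $w\bar1\in M^{\ad\mf g_{\geq\frac12}}$ if and only if $[a,w]\bar1=0$ for all $a\in\mf g_{\geq\frac12}$, i.e. if and only if $w\in\widetilde W$ in the notation of \eqref{0225:eq3}. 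Since the projection $U(\mf g)\to M$ is surjective, the induced map $\widetilde W\to M^{\ad\mf g_{\geq\frac12}}$, $w\mapsto w\bar1$, is surjective, with kernel $\widetilde W\cap J=J$ because $J\subset\widetilde W$ by Lemma~\ref{0225:prop1a}(a). This yields a linear isomorphism $\widetilde W/J\cong M^{\ad\mf g_{\geq\frac12}}$.

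Next I would transport the algebra structure through this isomorphism. By Lemma~\ref{0225:prop1a}(c) the subspace $\widetilde W$ is a subalgebra of $U(\mf g)$, and it is unital since $[a,1]=0$ gives $1\in\widetilde W$; by Lemma~\ref{0225:prop1a}(d), $J$ is a two-sided ideal of $\widetilde W$. Hence the quotient $\widetilde W/J$ is tautologically a unital associative algebra, with product $\overline w\cdot\overline{w'}=\overline{ww'}$, unit $\bar1$, and associativity inherited from $U(\mf g)$; composing with the isomorphism above equips $M^{\ad\mf g_{\geq\frac12}}$ with this structure. Concretely the product on $M^{\ad\mf g_{\geq\frac12}}$ reads $(u\bar1)(v\bar1)=u\cdot(v\bar1)=\overline{uv}$ for $u,v\in\widetilde W$, using the left $U(\mf g)$-module structure of $M$ on the first factor. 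To check directly that this does not depend on the chosen representatives: replacing $v$ by $v+h$ with $h\in J$ changes $uv$ by $uh\in J$ since $J$ is a left ideal of $U(\mf g)$, while replacing $u$ by $u+h$ changes $uv$ by $hv$, which lies in $J$ by Lemma~\ref{0225:prop1a}(b).

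The only genuinely delicate point in the argument is this last well-definedness, and it is exactly what Lemma~\ref{0225:prop1a} is built to supply: $J$ is merely a \emph{left} ideal of $U(\mf g)$, not a two-sided one, so $hv\in J$ for $h\in J$ is false in general and really requires $v\in\widetilde W$; similarly the closure $\widetilde W\cdot\widetilde W\subset\widetilde W$ is not evident from the defining condition \eqref{0225:eq3}. Once these structural facts are granted, the remaining verifications—the identification with $\widetilde W/J$, associativity, and the unit $\bar1$—are purely formal. Accordingly I do not expect any serious obstacle here beyond bookkeeping: the substance of the proposition resides in Lemmas~\ref{0303:lem4} and~\ref{0225:prop1a}, and this statement is their clean packaging.
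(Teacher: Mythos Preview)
Your argument is correct and is exactly the intended route: the vector-space identification $M^{\ad\mf g_{\geq\frac12}}\simeq\widetilde W/J$ via $w\mapsto w\bar1$, followed by the observation that Lemma~\ref{0225:prop1a}(c)--(d) make $\widetilde W/J$ a unital associative algebra, with well-definedness of the product supplied by Lemma~\ref{0225:prop1a}(b). Note, however, that the present paper does not give its own proof of this proposition; it is stated as a citation of \cite[Prop.~3.3]{DSKV17}, so there is no in-paper argument to compare against---your write-up simply fills in what the paper takes for granted from the earlier reference.
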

\begin{definition}\label{def:Walg}
The \emph{finite} $W$-\emph{algebra}
associated to the Lie algebra $\mf g$ and its nilpotent element $f$
is the algebra $W(\mf g,f)$ defined in \eqref{0225:eq8}.
\end{definition}

%%%%%%%%%%%%%%%%%%%%%%%%%%%%%%%%%%%%%%%
\section{The operator \texorpdfstring{$L(z)$}{L(z)} for the \texorpdfstring{$W$}{W}-algebra \texorpdfstring{$W(\mf g,f)$}{W(g,f)}}
\label{sec:4}

%%% SET UP
\subsection{Setup and notation}\label{sec:4.1}

As in Section \ref{sec:3},
let $\mf g$ be a reductive Lie algebra,
let $\{f,2x,e\}\subset\mf g$ be an $\mf{sl}_2$-triple
and let \eqref{eq:grading}
be the corresponding $\ad x$-eigenspace decomposition.

Let $\varphi:\,\mf g\to\End V$ be a faithful representation of $\mf g$
on the $N$-dimensional vector space $V$.
Throughout the paper we shall often use the following convention:
we denote by lowercase latin letters elements of the Lie algebra $\mf g$,
and by the same uppercase letters the corresponding (via $\varphi$)
elements of $\End V$.
For example, $F=\varphi(f)$ is a nilpotent endomorphism of $V$.
Moreover, $X=\varphi(x)$ is a semisimple endomorphism of $V$ with half-integer eigenvalues.
The corresponding $X$-eigenspace decomposition of $V$ is
\begin{equation}\label{eq:grading_V}
V=\bigoplus_{k\in\frac12\mb Z}V[k]
\,.
\end{equation}
Let $\frac d2$ be the largest $X$-eigenvalue in $V$.
We also have the corresponding $\ad X$-eigenspace 
decomposition of $\End V$:
\begin{equation}\label{eq:grading_EndV}
\End V=\bigoplus_{k\in\frac12\mb Z}(\End V)[k]
\,,
\end{equation}
which has largest eigenvalue $d$.
We shall denote, for $k\in\frac12\mb Z$, $V[\geq k]=\oplus_{j\geq k}V[j]$,
and similarly $V[>k],\,V[\leq k],\,V[<k]$.
Also, we shall denote $(\End V)[\geq k]=\oplus_{j\geq k}(\End V)[j]$,
and similarly for $(\End V)[\leq k]$, etc.

We shall denote, for $k\in\frac12\mb Z$, the maps
\begin{equation}\label{eq:chi}
\Psi_k:\,V[k]\hookrightarrow V
\,\text{ and }\,
\Pi_k:\,V\twoheadrightarrow V[k]
\,,
\end{equation}
where $\Psi_k$ is the natural immersion and $\Pi_k$ is the projection
w.r.t. the decomposition \eqref{eq:grading_V}.
Similarly, we shall also denote
\begin{equation}\label{eq:chi2}
\begin{array}{l}
\displaystyle{
\vphantom{\Big(}
\Psi_{>k}:\,V[>k]\hookrightarrow V
\,,\,\,
\Psi_{<k}:\,V[<k]\hookrightarrow V
\,,} \\
\displaystyle{
\vphantom{\Big(}
\Pi_{>k}:\,V\twoheadrightarrow V[>k]
\,,\,\
\Pi_{<k}:\,V\twoheadrightarrow V[<k]
\,.}
\end{array}
\end{equation}
Using these maps, we can construct the short exact sequences
\begin{equation}\label{eq:chi3}
\begin{array}{l}
\displaystyle{
\vphantom{\Big(}
\chi_1\,:\,\,
0\to V\big[>-\frac d2\big]
\stackrel{\Psi_{>-\frac d2}}{\longrightarrow} 
V
\stackrel{\Pi_{-\frac d2}}{\longrightarrow} 
V\big[-\frac d2\big]
\,,} \\
\displaystyle{
\vphantom{\Big(}
\chi_2\,:\,\,
0\to V\big[\frac d2\big]
\stackrel{\Psi_{\frac d2}}{\longrightarrow} 
V
\stackrel{\Pi_{<\frac d2}}{\longrightarrow} 
V\big[<\frac d2\big]
\,.}
\end{array}
\end{equation}

Recalling the $\ad x$-eigenspace decomposition \eqref{eq:grading}
we shall denote, for $k\in\frac12\mb Z$,
\begin{equation}\label{20170623:eq1}
\pi_k\,:\,\,
\mf g\twoheadrightarrow \mf g_k
\,,
\end{equation}
the projection w.r.t. \eqref{eq:grading},
and similarly for the maps
\begin{equation}\label{20170623:eq2}
\pi_{>k}:\,
\mf g\twoheadrightarrow \mf g_{>k}
\,\,,\,\,\,\,
\pi_{<k}\,:\,\,
\mf g\twoheadrightarrow \mf g_{<k}
\,\,,\,\,\,\,
\pi_{\geq k}\,:\,\,
\mf g\twoheadrightarrow \mf g_{\geq k}
\,\,,\,\,\,\,
\pi_{\leq k}\,:\,\,
\mf g\twoheadrightarrow \mf g_{\leq k}
\,.
\end{equation}
We shall also denote, with a slight abuse of notation,
\begin{equation}\label{20170623:eq1b}
\Pi_k\,:\,\,
\End V\twoheadrightarrow (\End V)[k]
\,,
\end{equation}
the projection with respect to the $\ad X$-eigenspace decomposition \eqref{eq:grading_EndV},
and similarly for $\Pi_{>k},\,\Pi_{<k},\,\Pi_{\geq k},\,\Pi_{\leq k}$.

Recall that the trace form on $\mf g$ associated to the representation $V$ 
is, by definition,
\begin{equation}\label{20170317:eq1}
(a|b)=\tr_V(\varphi(a)\varphi(b))\,,
\qquad
a,b\in\mf g
\,,
\end{equation}
and we assume that it is non-degenerate.
Let $\{u_i\}_{i\in I}$ be a basis of $\mf g$ 
compatible with the $\ad x$-eigenspace decomposition \eqref{eq:grading},
i.e. $I=\sqcup_k I_k$ where $\{u_i\}_{i\in I_k}$ is a basis of $\mf g_k$.
We also denote $I_{\leq\frac12}=\sqcup_{k\leq\frac12}I_k$,
and similarly for $I_{\leq0}$, $I_{\geq\frac12}$, etc.
%Moreover, we shall also need, in the second part of the paper, 
%that $\{u_i\}_{i\in I}$ contains a basis $\{u_i\}_{i\in I_f}$ of $\mf g^f$.
Let $\{u^i\}_{i\in I}$ be the basis of $\mf g$ dual to $\{u_i\}_{i\in I}$ with respect 
to the form \eqref{20170317:eq1},
i.e. $(u_i|u^j)=\delta_{i,j}$.
According to our convention,
we denote by $U_i=\varphi(u_i)$ and $U^i=\varphi(u^i)$, $i\in I$, 
the corresponding endomorphisms of $V$.

Consider the following important element
\begin{equation}\label{20170623:eq4}
U=\sum_{i\in I}u_i U^i\in\mf g\otimes\End V
\,.
\end{equation}
Here and further we are omitting the tensor product sign.
Then we have, according to the notation \eqref{20170623:eq1}-\eqref{20170623:eq1b}, the identities
\begin{equation}\label{20170623:eq3}
\pi_k U=\Pi_{-k} U
\,\,,\,\,\,\,
\pi_{\geq k} U=\Pi_{\leq -k} U
\,,\dots
\,,
\end{equation}
where $\pi_k$ and $\pi_{\geq k}$
act on the first factors of the tensor product $\mf g\otimes\End V$,
while $\Pi_{-k}$ and $\Pi_{\leq -k}$ act on the second factors.

We shall denote by $\delta(a)$ the eigenvalue of $\ad x$ on a (homogeneous) element $a\in\mf g$,
i.e.
\begin{equation}\label{20170623:eq5}
\delta(a)=k
\,\,\text{ if and only if }\,\,
a\in\mf g_k
\,.
\end{equation}
Similarly, for an index $i\in I$, we shall denote
\begin{equation}\label{20170623:eq6}
\delta(i)=\delta(u_i)
\,.
\end{equation}
Throughout the paper, we shall use the following convenient notation
on summations ($h,k\in\frac12\mb Z$):
\begin{equation}\label{20170623:eq7}
\sum_{h\leq\delta(i)\leq k}F(i)
=
\sum_{h\leq\ell\leq k}\sum_{i\in I_\ell}F(i)
\,,
\end{equation}
where $F(i)$ is any expression depending on the index $i$.

%%% SHIFT
\subsection{The shift matrix}\label{sec:4.1b}

The following endomorphism of $V$ 
(which we will call the ``shift matrix'')
will play an important role in the paper:
\begin{equation}\label{eq:D}
D
=
-\sum_{\delta(i)\geq1}U^iU_i
\,\in(\End V)[0]
\,,
\end{equation}
where we are using the notation \eqref{20170623:eq7}.
Note that, by definition, we have (cf. \eqref{eq:chi}):
\begin{equation}\label{20170623:eq8}
\Pi_{\frac d2}D=0
\,\,\text{ and }\,\,
D\Psi_{\frac d2}=0
\,.
\end{equation}
\begin{remark}\label{20170628:rem}
Note that $D$ remains unchanged if we replace $U_i$ by $\varphi(v_i)$,
where $\{v_i\}$ is any basis of $\mf g_{\geq1}$,
and we replace $U^i$ by $\varphi(v^i)$,
where $\{v^i\}$ is the dual (w.r.t. the trace form of $\mf g$) basis of $\mf g_{\leq-1}$.
Moreover, $D$ commutes with the action of $\varphi(\mf g_0)$ on $V$.
In particular, $D$ commutes with any Cartan subalgebra of $\mf g$ contained in $\mf g_0$,
and therefore it preserves the corresponding weight space decomposition of $V$.
However, as Example \ref{ex:Dadg} below shows, 
$D$ does not act necessarily as a scalar on each weight space.
\end{remark}
\begin{example}\label{ex:DglN}
Consider the Lie algebras $\mf{gl}_N$ or $\mf{sl}_N$ and their standard representation $V=\mb F^N$.
In this case, it is not hard to compute the shift matrix $D$ explicitly
(for example, by fixing the standard basis $\{E_{ij}\}$ of elementary matrices,
and its dual, w.r.t. the trace form, basis $\{E_{ji}\}$,
and assuming that the degree operator $X$ is diagonal).
As a result, we get (both for $\mf{gl}_N$ and $\mf{sl}_N$):
\begin{equation}\label{20170623:eq11}
D
=
-\sum_{k\in\frac12\mb Z}\dim(V[\geq k+1]) \id_{V[k]}
\,,
\end{equation}
which is the same as the diagonal matrix defined in \cite[Eq.(4.6)]{DSKV17}.
As a side remark, taking the trace of both sides of \eqref{20170623:eq11},
we get the following interesting combinatorial identity:
$$
\dim(\mf g_{\geq1})=\sum_{k}\dim(V[k])\dim(V[\geq k+1])
\,.
$$
\end{example}
\begin{example}\label{ex:DsoN}
Consider the Lie algebra $\mf{so}_N$
and its standard representation $V=\mb F^N$.
With a similar computation as in Example \ref{ex:DglN}
(for example, one can represent the Lie algebra $\mf{so}_N$ 
as the subalgebra of $\mf{gl}_N$
spanned by the matrices 
$F_{ij}=E_{ij}-E_{N+1-j,N+1-i}$,
and take the basis $\{F_{ij}\}_{j<N+1-i}$,
and the dual basis $\{\frac12F_{ji}\}$), we get
\begin{equation}\label{20170623:eq12}
D
=
-\frac12\sum_{k\in\frac12\mb Z}\dim(V[\geq k+1]) \id_{V[k]}+\frac12\id_{V[\leq-\frac12]}
\,.
\end{equation}
\end{example}
\begin{example}\label{ex:DspN}
Consider the Lie algebra $\mf{sp}_N$, for even $N$,
and its standard representation $V=\mb F^N$.
The computation in this case is analogous to that of Example \ref{ex:DsoN}.
In this case we take the basis 
$F_{ij}=E_{ij}-(-1)^{i+j}E_{N+1-j,N+1-i}$,
and the indices satisfy the inequality $j\leq N+1-i$.
The result for the shift matrix in this case is
\begin{equation}\label{20170623:eq13}
D
=
-\frac12\sum_{k\in\frac12\mb Z}\dim(V[\geq k+1]) \id_{V[k]}-\frac12\id_{V[\leq-\frac12]}
\,.
\end{equation}
\end{example}
\begin{remark}
Note that \eqref{20170623:eq12} and \eqref{20170623:eq13}
are, up to a scaling factor and adding an overall constant, the same shifts that they get in \cite{Br09}
to describe the finite $W$-algebras associated to the Lie algebras 
$\mf{so}_N$ and $\mf{sp}_N$ and their rectangular nilpotent elements corresponding to a partition of the form $N=p+\dots+p$
with $p$ even (for $p$ odd the role of the shift matrix $D$ and the shifts in \cite{Br09} is different).
Hence, our results of Section \ref{sec:5}
extend, in particular, the construction of \cite{Br09}
to arbitrary nilpotent elements of $\mf{so}_N$ and $\mf{sp}_N$.
\end{remark}
\begin{example}\label{ex:Dsl2}
Consider the Lie algebra $\mf{sl}_2$ and its irreducible representation 
in the $N$-dimensional vector space $V$.
The action of $\mf{sl}_2$ in some basis of $V$ is given by
$$
F=\sum_{i=1}^{N-1}E_{i+1,i}
\,\,,\,\,\,\,
X=\frac12\sum_{i=1}^N(N+1-2i)E_{ii}
\,\,,\,\,\,\,
E=\sum_{i=1}^{N-1}i(N-i)E_{i,i+1}
\,.
$$
In this case, 
the shift matrix \eqref{eq:D} is easily computed:
$$
D
=
-\frac1{\tr(FE)}FE
=
-\frac6{N^3-N}\sum_{i=1}^N(i-1)(N+1-i) E_{ii}
\,.
$$

\end{example}
\begin{example}\label{ex:Dadg}
Let $V=\mf g$ be the adjoint representation of $\mf g$ and let $f\in\mf g$ be a principal nilpotent element.
In this case, $\mf g_0$ is a Cartan subalgebra of $\mf g$,
and consider the corresponding root space decomposition 
$\mf g=\mf g_{\leq-1}\oplus\mf g_0\oplus\mf g_{\geq1}$,
with $\mf g_{\geq1}=\oplus_{\alpha>0}\mb Fe_\alpha$
and $\mf g_{\leq-1}=\oplus_{\alpha>0}\mb Fe_{-\alpha}$.
We may normalize the root vectors by letting $(e_\alpha|e_{-\alpha})=1$.
Then, we have
$$
D(a)
=
-\sum_{\alpha>0}[e_{-\alpha},[e_{\alpha},a]]
\,,
$$
for every $a\in\mf g$.
In particular, for $h\in\mf g_0$, we have
$$
D(h)
=
-\sum_{\alpha>0}\alpha(h)h_\alpha
\,.
$$
This shows that, even though $D$ preserves the root space decomposition of $\mf g$
(hence all the root vectors $e_\alpha$ are its eigenvectors),
it does not act as a scalar on $\mf g_0$.
\end{example}

\subsection{The operator $L(z)$}
\label{sec:4.3}

We introduce some important $\End V$-valued polynomials in $z$,
and Laurent series in $z^{-1}$, with coefficients in $U(\mf g)$.
The first one is (cf. \eqref{20170623:eq4})
\begin{equation}\label{eq:A}
A(z)=z\id_V+U=z\id_V+\sum_{i\in I}u_i U^i
\,\,
\in U(\mf g)[z]\otimes\End(V)
\,.
\end{equation}
Here and further, we drop the tensor product sign when writing an element of 
$\mc U\otimes\End V$ ($\mc U$ being, in this case, the associative algebra $U(\mf g)[z]$).
Another important operator is
(keeping the same notation as in \cite{DSKV17})
\begin{equation}\label{eq:Arho}
A^{\rho}(z)
=
z\id_V+F+\pi_{\leq\frac12}U
=
z\id_V+F+\sum_{i\in I_{\leq\frac12}}u_i U^i
\,\,\in U(\mf g)[z]\otimes\End V
\,.
\end{equation}
There is a close connection between the operators $A(z)$ and $A^{\rho}(z)$,
which can be described in terms of the $U(\mf g)$-module $M$
defined in \eqref{eq:M}.
%
%Clearly, $U(\mf g)[z]\otimes\End V$,
%being tensor product of unital associative algebras,
%is a unital associative algebra.
%
We extend, in the obvious way,
the left action of $U(\mf g)$ on the module $M$ \eqref{eq:M}
to a left action of the associative algebra 
$U(\mf g)[z]\otimes\End V$ 
on the module $M[z]\otimes\End V$.
(Later we shall also consider the further extension to a left action
of $U(\mf g)((z^{-1}))\otimes\End V$ 
on the module $M((z^{-1}))\otimes\End V$).
Applying \eqref{eq:A} and \eqref{eq:Arho} to $\bar1\,(=\bar 1\id_V)$, 
by \eqref{0225:eq4}, \eqref{eq:M}
we get the following identity
$$
A(z)\,\bar 1
=
A^\rho(z)\,\bar1
\,\in M[z]\otimes\End V
\,.
$$

Now we introduce 
the operator $L(z)$.
Consider the generalized quasideterminant (cf. \eqref{eq:linalg7})
\begin{equation}\label{eq:tildeL}
\widetilde L(z)
=
|A^\rho(z)+D|_{\Psi_{\frac d2},\Pi_{-\frac d2}}
=
\Big(\Pi_{-\frac d2}\big(z\id_V+F+\pi_{\leq\frac12}U+D\big)^{-1}\Psi_{\frac d2}\Big)^{-1}
%\,\in U(\mf g)((z^{-1}))\otimes\Hom\big(V\big[\frac d2\big],V\big[-\frac d2\big]\big)
\,,
\end{equation}
where $\Pi_{-\frac d2}$ and $\Psi_{\frac d2}$ are defined in \eqref{eq:chi}
and $D$ is the ``shift matrix'' \eqref{eq:D}.
We shall prove that,
if we view $A^\rho(z)+D$
as an element of the algebra $U(\mf g)((z^{-1}))\otimes\End V$,
then its generalized quasideterminant \eqref{eq:tildeL} exists,
and it lies in $U(\mf g)((z^{-1}))\otimes\Hom\big(V\big[-\frac d2\big],V\big[\frac d2\big]\big)$.
The matrix $L(z)$ is defined as the image in the module 
$M((z^{-1}))\otimes\Hom\big(V\big[-\frac d2\big],V\big[\frac d2\big]\big)$
of this quasideterminant:
\begin{equation}\label{eq:L}
L(z)
=
L_{\mf g,f,V}(z)
:=
\widetilde L(z)\bar 1
\,.
\end{equation}

The main result of the present Section is that
the entries of the coefficients of $L(z)$ actually lie in the $W$-algebra 
$W(\mf g,f)\subset M$.
This is stated in Theorem \ref{thm:main1} below.
Before stating it, we prove, 
in Section \ref{sec:4.4},
that the generalized quasideterminant defining $\widetilde L(z)$ exists.

% L(Z) EXISTS
\subsection{$\widetilde L(z)$ exists}
\label{sec:4.4}

\begin{proposition}\label{thm:L1}
\begin{enumerate}[(a)]
\item
The operator $A^\rho(z)+D$ is invertible in
$U(\mf g)((z^{-1}))\otimes\End V$.
\item
The operator $\Pi_{-\frac d2}(A^\rho(z)+D)^{-1}\Psi_{\frac d2}
\,\in U(\mf g)((z^{-1}))\otimes\Hom\big(V\big[\frac d2\big],V\big[-\frac d2\big]\big)$
is a Laurent series in $z^{-1}$
of degree (=the largest power of $z$ with non-zero coefficient) 
equal to $-d-1$,
and with leading coefficient $(-1)^{d}\Pi_{-\frac d2}F^d\Psi_{\frac d2}$.
In particular, it is invertible,
with inverse in
$U(\mf g)((z^{-1}))\otimes\Hom\big(V\big[-\frac d2\big],V\big[\frac d2\big]\big)$.
\item
Consequently,
the quasideterminant defining $\widetilde L(z)$ (cf. \eqref{eq:tildeL}) exists
and lies in $U(\mf g)((z^{-1}))\otimes\Hom\big(V\big[-\frac d2\big],V\big[\frac d2\big]\big)$.
\end{enumerate}
\end{proposition}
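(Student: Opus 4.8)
The plan is to invert $A^\rho(z)+D$ by a geometric series in $z^{-1}$, and then to read off the order and leading coefficient of the matrix coefficient $\Pi_{-\frac d2}(A^\rho(z)+D)^{-1}\Psi_{\frac d2}$ by bookkeeping of the $\ad X$-grading.

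For (a) I would write $A^\rho(z)+D=z\id_V+B$, where $B:=F+\pi_{\leq\frac12}U+D\in U(\mf g)\otimes\End V$ does not depend on $z$. Since $z$ is central, the candidate $z^{-1}\sum_{n\geq0}(-1)^nz^{-n}B^n$ makes sense in $U(\mf g)((z^{-1}))\otimes\End V$ (for each power of $z^{-1}$ only one summand contributes, and it is a single element of $U(\mf g)\otimes\End V$), and a one-line computation shows it is a two-sided inverse of $z\id_V+B$; in fact it lies in $z^{-1}U(\mf g)[[z^{-1}]]\otimes\End V$.

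For (b), starting from (a) one has $(A^\rho(z)+D)^{-1}=\sum_{n\geq0}(-1)^nz^{-n-1}B^n$, hence
\[
\Pi_{-\frac d2}(A^\rho(z)+D)^{-1}\Psi_{\frac d2}
=\sum_{n\geq0}(-1)^nz^{-n-1}\,\Pi_{-\frac d2}B^n\Psi_{\frac d2}\,.
\]
Expanding $B^n=\big(F+\sum_{i\in I_{\leq\frac12}}u_iU^i+D\big)^n$, every summand is of the form $g\otimes T$ with $g\in U(\mf g)$ a product of the chosen $u_i$'s and $T\in\End V$ a product, in some order, of $a$ factors $F\in(\End V)[-1]$, of $b'$ factors $U^i$ with $i\in I_{\leq\frac12}$ (so $U^i\in(\End V)[-\delta(i)]$ with $\delta(i)\leq\frac12$), and of $b''$ factors $D\in(\End V)[0]$, where $a+b'+b''=n$. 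Such a $T$ is $\ad X$-homogeneous of degree $-a-\sum\delta(i)\geq -a-\frac{b'}2$. Since $\Pi_{-\frac d2}T\Psi_{\frac d2}$ is a map $V[\frac d2]\to V[-\frac d2]$, it vanishes unless this degree equals $-d$, which forces $a=d-\sum\delta(i)\geq d-\frac{b'}2$, so $n=a+b'+b''\geq d+\frac{b'}2+b''$. As $n,d$ are integers and $b',b''\geq0$, this yields $n\geq d$, with $n=d$ only when $b'=b''=0$, i.e. $T=F^d$ and $g=1$; hence
\[
\Pi_{-\frac d2}(A^\rho(z)+D)^{-1}\Psi_{\frac d2}
=(-1)^dz^{-d-1}\Pi_{-\frac d2}F^d\Psi_{\frac d2}
+\sum_{n\geq d+1}(-1)^nz^{-n-1}\,\Pi_{-\frac d2}B^n\Psi_{\frac d2}\,.
\]
By $\mf{sl}_2$-representation theory only the $(d+1)$-dimensional irreducible summands of $V$ meet $V[\pm\frac d2]$, and on each of them $F^d$ carries the highest-weight line isomorphically onto the lowest-weight line; since $F^d(V[\frac d2])\subseteq V[-\frac d2]$, the operator $\Pi_{-\frac d2}F^d\Psi_{\frac d2}$ is precisely this isomorphism $V[\frac d2]\to V[-\frac d2]$. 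Therefore the Laurent series above has top degree exactly $-d-1$ with leading coefficient $(-1)^d\Pi_{-\frac d2}F^d\Psi_{\frac d2}$, and being of the form $z^{-d-1}c_0\big(\id_{V[\frac d2]}+(\text{strictly lower order in }z^{-1})\big)$ with $c_0$ invertible over $\mb F$, it is invertible with inverse in $U(\mf g)((z^{-1}))\otimes\Hom\big(V[-\frac d2],V[\frac d2]\big)$.

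Part (c) is then immediate from \eqref{eq:linalg7}--\eqref{eq:linalg8}: the generalized quasideterminant $\widetilde L(z)=|A^\rho(z)+D|_{\Psi_{\frac d2},\Pi_{-\frac d2}}=\big(\Pi_{-\frac d2}(A^\rho(z)+D)^{-1}\Psi_{\frac d2}\big)^{-1}$ exists precisely because $A^\rho(z)+D$ is invertible (part (a)) and $\Pi_{-\frac d2}(A^\rho(z)+D)^{-1}\Psi_{\frac d2}$ is invertible (part (b)), and it lies in $U(\mf g)((z^{-1}))\otimes\Hom\big(V[-\frac d2],V[\frac d2]\big)$ by the last assertion of (b). I expect the only delicate point to be the degree count in (b): it is essential that $A^\rho(z)$ carries the truncation $\pi_{\leq\frac12}U$ rather than the full $U$, so that every non-$F$ factor contributes $\ad X$-degree $\geq-\frac12$ to $T$; together with the integrality of $n$ and $d$, this is exactly what pushes all corrections to order $\leq-d-2$ and isolates the leading term.
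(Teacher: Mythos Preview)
Your argument is correct and follows essentially the same route as the paper: invert $A^\rho(z)+D=z\id_V+B$ by a geometric series, then use the $\ad X$-grading (each factor of $B$ lowers $X$-weight by at most $1$, and only $F$ achieves exactly $-1$) to isolate the leading term $(-1)^d\Pi_{-\frac d2}F^d\Psi_{\frac d2}$, whose invertibility comes from $\mf{sl}_2$ theory. Your degree count is a slightly more explicit version of the paper's grading observation, but the content is the same.
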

\begin{proof}
The operator 
$A^\rho(z)+D=z\id_V+F+\pi_{\leq\frac12}U+D$
is a polynomial in $z$ of degree $1$,
with leading coefficient $\id_V$.
Hence it is invertible in the algebra 
$U(\mf g)((z^{-1}))\otimes\End V$,
and its inverse can be computed by geometric series expansion:
\begin{equation}\label{eq:thm1-pr1}
(A^\rho(z)+D)^{-1}
=
\sum_{\ell=0}^\infty (-1)^\ell z^{-\ell-1}
\big(F+\pi_{\leq\frac12}U+D\big)^\ell
\,.
\end{equation}
This proves part (a). 
Next, we prove part (b).
We have, by \eqref{eq:thm1-pr1},
\begin{equation}\label{eq:thm1-pr2}
\Pi_{-\frac d2}(A^\rho(z)+D)^{-1}\Psi_{\frac d2}
=
\sum_{\ell=0}^\infty (-1)^\ell z^{-\ell-1}
\Pi_{-\frac d2}\big(F+\pi_{\leq\frac12}U+D\big)^\ell\Psi_{\frac d2}
\,.
\end{equation}
Note that
$F\in(\End V)[-1]$,
$U^i\in(\End V)[\geq-\frac12]$ for every $i\in I_{\leq\frac12}$,
and $D\in(\End V)[0]$.
Since $\im\Psi_{\frac d2}= V\big[\frac d2\big]$
and $\ker\Pi_{-\frac d2}=V\big[>-\frac d2\big]$,
we have
$$
\Pi_{-\frac d2}\big(F+\pi_{\leq\frac12}U+D\big)^\ell\Psi_{\frac d2}
=0
\,\,\text{ for every }\,\,
0\leq \ell<d
\,,
$$
and 
$$
\Pi_{-\frac d2}\big(F+\pi_{\leq\frac12}U+D\big)^d\Psi_{\frac d2}
=
\Pi_{-\frac d2}F^d\Psi_{\frac d2}
\,.
$$
Hence, by \eqref{eq:thm1-pr2},
$\Pi_{-\frac d2}(A^\rho(z)+D)^{-1}\Psi_{\frac d2}=(-1)^dz^{-d-1}\Pi_{-\frac d2}F^d\Psi_{\frac d2}+$
lower powers of $z$.
On the other hand, by representation theory of $\mf{sl}_2$,
the map
$\Pi_{-\frac d2}F^d\Psi_{\frac d2}:\,V\big[\frac d2\big]\to V\big[-\frac d2\big]$
is invertible.
Claim (b) follows.
Claim (c) is an obvious consequence of (a) and (b).
\end{proof}

%%%
\subsection{The first Main Theorem}
\label{sec:main-thm}

\begin{theorem}\label{thm:main1}
The entries of the coefficients of the operator $L(z)$ defined in \eqref{eq:L} 
lie in the $W$-algebra $W(\mf g,f)$:
$$
L(z):=|z\id_V+F+\pi_{\leq\frac12}U+D|_{\Psi_{\frac d2},\Pi_{-\frac d2}}\bar 1
\,\in
W(\mf g,f)((z^{-1}))\otimes\Hom\big(V\big[-\frac d2\big],V\big[\frac d2\big]\big)
\,.
$$
\end{theorem}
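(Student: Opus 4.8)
The plan is to show that $\widetilde L(z)$ lies in $\widetilde W((z^{-1}))\otimes\Hom(V[-\frac d2],V[\frac d2])$, i.e.\ that for every $a\in\mf g_{\geq\frac12}$ and every entry $w$ of the coefficients of $\widetilde L(z)$ one has $[a,w]\bar 1=0$ in $M$; passing to the quotient $\widetilde W/J=W(\mf g,f)$ then gives the claim. Working with the generalized quasideterminant directly is awkward, so I would instead exploit the characterization of $\widetilde L(z)$ as a Dirac reduction: by Proposition~\ref{prop:linalg}, $\widetilde L(z)=\overline{(A^\rho(z)+D)}^{\,D}_{\chi_1,\chi_2}$ for the short exact sequences $\chi_1,\chi_2$ of \eqref{eq:chi3}, so
$$
\widetilde L(z)=\Big(\Psi_{\frac d2}^{-1}\big((A^\rho+D)-(A^\rho+D)\Psi_{>-\frac d2}\big(\Pi_{<\frac d2}(A^\rho+D)\Psi_{>-\frac d2}\big)^{-1}\Pi_{<\frac d2}(A^\rho+D)\big)\Pi_{-\frac d2}^{-1}\Big)\,.
$$
Actually the cleaner route, and the one I expect the paper uses, is to relate $\widetilde L(z)$ to the operator $A(z)=z\id_V+U$: the point of the shift matrix $D$ is precisely that $A^\rho(z)+D$ and $A(z)$ have the ``same'' generalized quasideterminant after applying $\bar 1$, and $A(z)$ manifestly has good commutation properties because $\sum_i u_iU^i$ transforms nicely under $\ad\mf g$.

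Concretely, the key computation is the following. For $a\in\mf g$ one has $[a\otimes\id_V,\,U]=[a\otimes\id_V,\sum_i u_iU^i]=\sum_i [a,u_i]U^i$, and using $\sum_i[a,u_i]\otimes u^i=-\sum_i u_i\otimes[a,u^i]$ (invariance of the trace form) together with $\varphi([a,u^i])=[A,U^i]$, this equals $-\sum_i u_i[A,U^i]=-[\,\mathrm{id}\otimes A,\,U\,]$ up to the term coming from $[A,\cdot]$ acting on the scalar $z\id_V$, which vanishes. Hence
$$
[a\otimes\id_V,\,A(z)]=-[\,\id_{U(\mf g)}\otimes A,\,A(z)\,]=[A(z),\,\id\otimes A]\,,
$$
i.e.\ $\ad(a\otimes\id_V)$ acts on $A(z)$ the same way as $-\ad(1\otimes A)$. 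This is the structural identity that makes everything work: it says that conjugating $A(z)$ by the (formal) one-parameter group generated by $a\otimes\id_V$ equals conjugating $A(z)^{\pm1}$ by $\exp$ of $A=\varphi(a)$ acting on $V$, a purely $\End V$-level operation that commutes with taking generalized quasideterminants. First I would establish this commutation lemma carefully (it is essentially Lemma~3.? of \cite{DSKV17} in the $\mf{gl}_N$ case, generalized), then transfer it from $A(z)$ to $A^\rho(z)+D$ by showing the discrepancy lands in the left ideal $J$ after applying $\bar 1$ — this is where Lemma~\ref{0303:lem4} and the definition of $D$ enter, and it is the analogue of the $\mf{gl}_N$ argument of \cite[Thm.4.2]{DSKV17}.

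The remaining steps are then formal: since generalized quasideterminants are natural with respect to $\End V$-conjugation on the source and target (one checks directly from \eqref{eq:linalg7} that $|g\cdot A\cdot h|_{g'\Psi_2,\,\Pi_1 h'}$ relates to $|A|_{\Psi_2,\Pi_1}$ for invertible $g,h$ and suitable $g',h'$, and in particular $\ad(1\otimes A)$ of the quasideterminant equals the quasideterminant of $\ad(1\otimes A)$ applied to $A^\rho(z)+D$, modulo corrections from the fact that $\Psi_{\frac d2},\Pi_{-\frac d2}$ are not $\ad X$-equivariant against an arbitrary $A$), I would deduce $[a\otimes\id_V,\widetilde L(z)]\bar 1=0$ for all $a\in\mf g_{\geq\frac12}$ from the corresponding property of $A^\rho(z)+D$ applied to $\bar 1$. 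Two technical points need care: (1) the quasideterminant is only defined after inverting $A^\rho(z)+D$ in $U(\mf g)((z^{-1}))\otimes\End V$, so the commutation identities must be checked at the level of the inverse series $\sum_\ell(-1)^\ell z^{-\ell-1}(F+\pi_{\leq\frac12}U+D)^\ell$ of \eqref{eq:thm1-pr1}, not just formally; (2) when pushing $[a,\cdot]$ past $(A^\rho(z)+D)^{-1}$ one produces terms $(A^\rho+D)^{-1}[a,A^\rho+D](A^\rho+D)^{-1}$, and one must verify $[a,\,F+\pi_{\leq\frac12}U+D]$, after applying $\bar 1$ on the appropriate side, reduces to something proportional to $[a,f]$-type elements that are killed by the definition of $J$ and of $D$.

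The main obstacle I anticipate is precisely controlling the shift matrix term: one must show that the ``correction'' built into $D$ is exactly what is needed so that $[a\otimes\id_V,\,F+\pi_{\leq\frac12}U+D]\,\bar 1$ matches $[A(z)\text{-version}]\,\bar 1$ modulo $J$ — in the $\mf{gl}_N$ case of \cite{DSKV17} this used the explicit combinatorial form of $D$, whereas here one only has the intrinsic definition \eqref{eq:D}, so the identities $\Pi_{\frac d2}D=0=D\Psi_{\frac d2}$ of \eqref{20170623:eq8}, Remark~\ref{20170628:rem} (basis-independence and $\mf g_0$-equivariance of $D$), and the invariance of the trace form must be combined to replace the lost combinatorics. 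Once that bookkeeping is done, applying $\Psi_{\frac d2}^{-1}(\,\cdot\,)\Pi_{-\frac d2}^{-1}$ and reducing modulo $J$ yields the statement.
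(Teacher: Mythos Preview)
Your high-level plan is the paper's: establish $[a,\,|\cdot|_{\Psi_{\frac d2},\Pi_{-\frac d2}}]\bar1=0$ for $a\in\mf g_{\geq\frac12}$ via the structural identity $[a,U]=[U,\varphi(a)]$ (so that $\Pi_{-\frac d2}\varphi(a)=0=\varphi(a)\Psi_{\frac d2}$ kills everything), and separately show that the quasideterminant of the ``full'' operator and of $A^\rho(z)+D$ agree on $\bar1$. Both insights are correct, and the commutation computation you wrote is exactly equations \eqref{20170626:eq8}--\eqref{20170626:eq10}.

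Two genuine gaps remain. First, the paper does not run the commutation argument on $A(z)$ or on $A^\rho(z)+D$ directly, but on $\id_V+z^{-\Delta}U=z^{-1-X}A(z)z^X$ viewed in the completed Rees algebra $\mc RU(\mf g)$ and its localization $\mc R_\infty U(\mf g)$ (Section~\ref{sec:ore}). This is not cosmetic: one needs a framework in which the quasideterminant is honestly invertible and in which membership in $W(\mf g,f)$ can be detected (Proposition~\ref{0413:prop1}); your proposal to work in $U(\mf g)((z^{-1}))$ and appeal to $\widetilde W$ does not give this, since $\widetilde L(z)^{-1}$ involves inverting elements of $U(\mf g)[[z^{-1}]]$ with nontrivial constant term. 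Second, the ``transfer'' step is the Main Lemma~\ref{lem:main} and is substantially harder than you indicate. After rewriting both quasideterminants by the Dirac formula (as you suggest) and cancelling, one is reduced to an identity $(A+B)^{-1}(v+w)\bar1=A^{-1}v\bar1$ (equation~\eqref{0330:eq9}), whose proof goes through solving a triangular system in the auxiliary quantities $X_i=(z^{-\Delta}u_i-(f|u_i))A^{-1}v\bar1$, $i\in I_{\geq1}$ (Lemmas~\ref{lem:key}--\ref{lem:key2}). The role of $D$ is pinned down by the identity $\sum_{1\leq\delta(j)\leq\delta(i)+\frac12}[U^j,U_i]U_j=-[D,U_i]$ (equation~\eqref{20170626:eq3}); this is what replaces the $\mf{gl}_N$ combinatorics, and it does not appear at the level of $[a,\,F+\pi_{\leq\frac12}U+D]\bar1$ as you suggest, but only after expanding inside the Dirac-reduction formula.
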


%%%%%%%%%%%%%%%%%%%%%%%%%%%%%%%%%%%%%%%
\section{Proof of Theorem \ref{thm:main1}}
%\label{sec:5}

We shall prove Theorem \ref{thm:main1}
in Section \ref{sec:4.6}.
Its proof will rely on the Main Lemma \ref{lem:main},
which will be stated in Section \ref{sec:4.5}
and proved in Sections \ref{sec:step1}-\ref{step5}.
In order to state (and prove) Lemma \ref{lem:main}, though,
we need to extend
the action of $U(\mf g)$ on the module $M=U(\mf g)/J$
to an action of the (completed) Rees algebra $\mc RU(\mf g)$
(and its extension $\mc R_\infty U(\mf g)$)
on the corresponding (completed) Rees module $\mc RM$.
This is the content of the next Section \ref{sec:ore}.

%%%
\subsection{Preliminaries: the Kazhdan filtration of $U(\mf g)$,
the Rees algebra $\mc RU(\mf g)$\!,
its localization $\mc R_\infty U(\mf g)$,
and the Rees module $\mc RM$
}
\label{sec:ore}

In the present section we review,
following  \cite[Sec.5]{DSKV17},
the construction of the (completed) Rees algebra $\mc RU(\mf g)$,
its extension $\mc R_\infty U(\mf g)$,
and the (completed) Rees module $\mc RM$.

First recall that,
associated to the grading \eqref{eq:grading} of $\mf g$,
we have the \emph{Kazhdan filtration} of $U(\mf g)$,
\begin{equation}\label{0312:eq1}
F_n U(\mf g)
=
\sum_{s-j_1-\dots-j_s\leq n}
\mf g_{j_1}\dots\mf g_{j_s}
\,\,,\,\,\,\,
n\in\frac12\mb Z
\,.
\end{equation}
In other words, $\{F_n U(\mf g)\}_{n\in\frac12\mb Z}$ 
is the increasing filtration of $U(\mf g)$ defined letting the degree,
called the \emph{conformal weight}, of $\mf g_j$ equal to $1-j$.
It has the following properties:
$F_{\Delta_1}U(\mf g)\cdot F_{\Delta_2}U(\mf g)\subset F_{\Delta_1+\Delta_2}U(\mf g)$,
and 
$[F_{\Delta_1}U(\mf g),F_{\Delta_2}U(\mf g)]\subset F_{\Delta_1+\Delta_2-1}U(\mf g)$.
Since $m-(f|m)$ is homogeneous in conformal weight,
the Kazhdan filtration induces the increasing filtration of the left ideal $J\subset U(\mf g)$,
given by
\begin{equation}\label{0312:eq2}
F_\Delta J
:=
\big(F_\Delta U(\mf g)\big)\cap J
=
\sum_{j\geq1}
(F_{\Delta+j-1}U(\mf g))
\big\{m-(f|m)\,\big|\,m\in\mf g_j\big\}
\,.
\end{equation}
Hence, we get the induced filtration of the quotient module $M=U(\mf g)/J$,
$$
F_\Delta M=F_\Delta U(\mf g)/F_\Delta J
\,\,,\,\,\,\,
\Delta\in\frac12\mb Z
\,.
$$
\begin{proposition}[{\cite[Sec.5.1]{DSKV17}}]\label{prop:rees1}
\begin{enumerate}[(a)]
\item
$F_n U(\mf g)=\delta_{n,0}\mb F\oplus F_nJ$ for every $n\leq 0$.
\item
$F_n M=\delta_{n,0}\mb F\bar 1$ for every $n\leq 0$.
\item
$F_0J\subset F_0U(\mf g)$ is a twosided ideal of codimension $1$,
and the corresponding quotient map 
is the algebra homomorphism
$\epsilon_0:\,F_0U(\mf g)\to\mb F$
given by the following formula:
\begin{equation}\label{0410:eq10}
\epsilon_0\big(\sum a_1\dots a_\ell\big) = \sum (f|a_1)\dots (f|a_\ell) 
\,.
\end{equation}
\item
The action of $F_0U(\mf g)$ on $F_0M=\mb F\bar 1$ is 
induced by the map \eqref{0410:eq10},
i.e. $u\bar 1=\epsilon(u)\bar 1$ for every $u\in F_0U(\mf g)$.
\end{enumerate}
\end{proposition}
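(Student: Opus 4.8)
The plan is to establish part (a) directly from the Poincar\'e--Birkhoff--Witt theorem and then to read off (b), (c), (d) essentially for free; the only real computation is the reduction underlying (a), and parts (c), (d) will be soft consequences of it.

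For (a), the inclusion $\delta_{n,0}\mb F\oplus F_nJ\subseteq F_nU(\mf g)$ is clear, since $1$ has conformal weight $0$ and $F_nJ=F_nU(\mf g)\cap J$ by \eqref{0312:eq2}; directness of the sum when $n=0$ comes from $1\notin J$ (which holds because $M=U(\mf g)/J\neq0$, a standard PBW fact). For the reverse inclusion I would fix a homogeneous PBW basis of $\mf g$ ordered so that the basis vectors of $\mf g_{\leq\frac12}$ come first, then those of $\mf g_1$, then those of $\mf g_{\geq\frac32}$. Since reordering a product of basis vectors produces commutator terms of strictly smaller conformal weight (each commutator merges two factors into one, lowering the conformal weight by $1$), the space $F_nU(\mf g)$ is spanned by the ordered monomials $w=a_1\cdots a_s$ of conformal weight $\leq n\leq0$, and I claim each such $w$ is congruent modulo $F_nJ$ to a scalar multiple of $1$, the scalar being $0$ when $n<0$. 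There are two cases. If $w$ has a factor in $\mf g_{\geq\frac32}$, then by the chosen ordering its \emph{last} factor is such an element $c\in\mf g_j$ with $j>1$; since the form pairs $\mf g_j$ with $\mf g_{-j}$ we have $(f|c)=0$, so $c=c-(f|c)$ is a generator of $J$ and $w=(w/c)\bigl(c-(f|c)\bigr)\in J$; as $w\in F_nU(\mf g)$ this gives $w\in F_nU(\mf g)\cap J=F_nJ$. If instead $w$ has no factor in $\mf g_{\geq\frac32}$, then — as the factors from $\mf g_{\leq\frac12}$ carry conformal weight $\geq\frac12$ while those from $\mf g_1$ carry conformal weight $0$ — the bound "conformal weight $\leq0$" forces $w=b_1\cdots b_r$ to be a product of basis vectors $b_i\in\mf g_1$ only, of total conformal weight $0$; writing $w=(b_1\cdots b_{r-1})\bigl(b_r-(f|b_r)\bigr)+(f|b_r)\,b_1\cdots b_{r-1}$, the first summand lies in $(F_0U(\mf g))\{m-(f|m):m\in\mf g_1\}\subseteq F_0J$ by \eqref{0312:eq2}, and the second is a scalar times a shorter monomial of the same type, so induction on $r$ yields $w\equiv\bigl(\textstyle\prod_i(f|b_i)\bigr)1\pmod{F_0J}$. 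Applying this to each monomial occurring in a given $u\in F_nU(\mf g)$ proves the reverse inclusion, with the scalar vanishing when $n<0$ because the second case occurs only at conformal weight $0$.

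Parts (b), (c), (d) then follow quickly. Part (b) is immediate: $F_nM=F_nU(\mf g)/F_nJ\cong\delta_{n,0}\mb F\bar1$. For (c), (a) shows $F_0J$ is a left ideal of $F_0U(\mf g)$ — indeed $F_0U(\mf g)\cdot F_0J\subseteq F_0U(\mf g)\cap J=F_0J$, using that $F_0U(\mf g)$ is a subalgebra and $J$ a left ideal — of codimension one, and any codimension-one left ideal of a unital algebra is the kernel of the character describing the action on the one-dimensional quotient, hence is two-sided; that character is the $\epsilon_0$ of the statement. To match it with the explicit formula \eqref{0410:eq10}, note that for $m\in\mf g_{\geq1}$ one has $m\in F_0U(\mf g)$ and $m-(f|m)\in F_0J=\ker\epsilon_0$, whence $\epsilon_0(m)=(f|m)$, so by multiplicativity $\epsilon_0(b_1\cdots b_r)=\prod_i(f|b_i)$ for $b_i\in\mf g_1$; and for an arbitrary product $a_1\cdots a_\ell$ of homogeneous elements lying in $F_0U(\mf g)$, PBW-reordering changes it by commutator terms of conformal weight $<0$, which lie in $F_{<0}U(\mf g)=F_{<0}J\subseteq\ker\epsilon_0$ by (a), so $\epsilon_0(a_1\cdots a_\ell)=\prod_i(f|a_i)$ when all $a_i\in\mf g_1$ and $0$ otherwise — but in the second case $\prod_i(f|a_i)=0$ as well, since $(f|\cdot)$ is supported on $\mf g_1$. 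Finally (d) is just the definition of $\epsilon_0$: $F_0M=\mb F\bar1$ is the quotient $F_0U(\mf g)/F_0J$, on which $u\in F_0U(\mf g)$ acts by multiplication by $\epsilon_0(u)$.

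The one step needing genuine care is the reduction in (a), and what makes it go through is that $\mf g_1$ sits in conformal weight exactly $0$: removing such a factor via the defining relations of $J$ neither raises the conformal weight (so we never leave $F_n$) nor is obstructed (the correction term lands in $F_{\mathrm{cw}(w)}J$ by the explicit description \eqref{0312:eq2}), while any factor in $\mf g_{\geq\frac32}$ pairs trivially with $f$ and so can be absorbed into $J$ outright. Everything after that — the codimension count, the passage from a codimension-one left ideal to a character, and the identification of that character — is formal.
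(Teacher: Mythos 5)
Your proof is correct. The present paper does not actually prove this proposition---it is quoted from \cite[Sec.5.1]{DSKV17}---and your argument (PBW reordering with the observation that commutators strictly lower conformal weight, the case split on whether a factor lies in $\mf g_{\geq\frac32}$ or all factors lie in $\mf g_1$, and the formal passage from a codimension-one left ideal to the character $\epsilon_0$) is exactly the standard route one would take; the only glossed point is the assertion $1\notin J$, which indeed follows from the usual PBW argument showing $M\cong U(\mf g_{<1})$ as a vector space since $(f|\cdot)$ vanishes on $[\mf g_{\geq1},\mf g_{\geq1}]\subset\mf g_{\geq2}$.
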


The (completed) Rees algebra $\mc RU(\mf g)$
associated to the Kazhdan filtration 
is defined as follows
\begin{equation}\label{eq:reesb}
\mc RU(\mf g)
=
\widehat{\sum}_{n\in\frac12\mb Z}
z^{-n}F_{n}U(\mf g)
\,\subset U(\mf g)((z^{-\frac12}))
\,,
\end{equation}
where the completion is defined by allowing 
series with infinitely many negative integer powers of $z^{\frac12}$.
Note that 
$F_0U(\mf g)\subset\mc RU(\mf g)$ is a subalgebra of the Rees algebra
(but, for $n>0$, $F_nU(\mf g)$ is not contained in $\mc RU(\mf g)$).
Note also that $z^{-\frac12}\in\mc RU(\mf g)$
is a central element of the Rees algebra
(but, for $n>0$,  $z^n$ does not lie in $\mc RU(\mf g)$).
We can consider the left ideal $\mc RJ$ of the Rees algebra $\mc RU(\mf g)$, defined,
with the same notation as in \eqref{eq:reesb}, as
\begin{equation}\label{eq:reesI}
\mc RJ
=
\widehat{\sum}_{n\in\frac12\mb Z}
z^{-n}F_{n}J
\,\subset J((z^{-\frac12}))
\,.
\end{equation}
Taking the quotient of the Rees algebra $\mc RU(\mf g)$ by its left ideal $\mc RJ$ 
we get the corresponding Rees module
\begin{equation}\label{eq:reesM}
\mc RM
=
\mc RU(\mf g)/\mc RJ
=
\mb F\bar1\oplus\mc R_-M
\,,
\end{equation}
where
\begin{equation}\label{0411:eq1}
\mc R_-M
=
\widehat{\sum}_{n\geq\frac12}
z^{-n}F_{n}M
\,\subset z^{-\frac12}M[[z^{-\frac12}]]
\end{equation}
is a submodule of codimension $1$.
Obviously, $\mc RM$ is a cyclic module over $\mc RU(\mf g)$
generated by the cyclic element $\bar 1$.
\begin{proposition}[{\cite[Sec.5.2-3]{DSKV17}}]\label{prop:rees2}
\begin{enumerate}[(a)]
\item
The map 
$\epsilon_0:\,F_0U(\mf g)\to\mb F$
defined by \eqref{0410:eq10}
extends to an algebra homomorphism
$\epsilon:\,\mc RU(\mf g)\to\mb F$
given by
\begin{equation}\label{0406:eq1}
\epsilon\big(\sum_na_nz^n\big)=\epsilon_0(a_0)
\,.
\end{equation}
\item
$\mc RJ\cdot\mc RM\subset z^{-1}\mc RM$,
and $z^{-\frac12}\mc RM\subset\mc R_-M$.
\item
The action of $\mc RU(\mf g)$ 
on the quotient module $\mc RM/\mc R_-M=\mb F\bar1$
is induced by the map \eqref{0406:eq1},
i.e. $a(z)\bar 1\equiv \epsilon(a(z))\bar 1\mod\mc R_-M$
for every $a(z)\in\mc RU(\mf g)$.
\item
An element $a(z)\in\mc RU(\mf g)$ acts as an invertible endomorphism of $\mc RM$
if and only if $\epsilon(a(z))\neq0$.
\end{enumerate}
\end{proposition}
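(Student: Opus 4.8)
\textbf{Proof proposal for Proposition \ref{prop:rees2}.}
The plan is to establish the four claims in order, relying on Proposition \ref{prop:rees1} as the basic input and on the structure of the Kazhdan filtration. For part (a), I would first check that $\epsilon_0:\,F_0U(\mf g)\to\mb F$ is a well-defined algebra homomorphism with kernel $F_0J$; this is precisely Proposition \ref{prop:rees1}(c), so the only new point is that the formula \eqref{0406:eq1} patches together into a homomorphism on all of $\mc RU(\mf g)$. Given $a(z)=\sum_n a_n z^{-n}$ and $b(z)=\sum_m b_m z^{-m}$ in $\mc RU(\mf g)$ (so $a_n\in F_nU(\mf g)$, $b_m\in F_mU(\mf g)$), the degree-zero component of the product $a(z)b(z)$ is $\sum_{n} a_n b_{-n}$, where $a_n b_{-n}\in F_0U(\mf g)$. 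By Proposition \ref{prop:rees1}(a), for $n<0$ we have $F_nU(\mf g)=F_nJ$, and since $J$ is a left ideal, $a_n b_{-n}\in F_0J=\ker\epsilon_0$; similarly for $n>0$, $b_{-n}\in F_{-n}J$, and since $F_\Delta J\cdot F_{\Delta'}U(\mf g_{\geq 1/2})$-type arguments (Lemma \ref{0303:lem4}(a)) give $a_n b_{-n}\in J$, hence again in $F_0J$. So only the $n=0$ term survives modulo $\ker\epsilon_0$, giving $\epsilon(a(z)b(z))=\epsilon_0(a_0b_0)=\epsilon_0(a_0)\epsilon_0(b_0)=\epsilon(a(z))\epsilon(b(z))$. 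Linearity and $\epsilon(1)=1$ are immediate.

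For part (b), the inclusion $z^{-1/2}\mc RM\subset\mc R_-M$ is immediate from the definitions \eqref{eq:reesM}--\eqref{0411:eq1}, since multiplication by $z^{-1/2}$ raises the $z$-adic order by $1/2$ and $\mc R_-M$ collects the strictly positive-order part. For $\mc RJ\cdot\mc RM\subset z^{-1}\mc RM$, take a homogeneous generator $z^{-\Delta}(m-(f|m))$ of $\mc RJ$ with $m\in\mf g_j$, $j\geq 1$, which by \eqref{0312:eq2} lies in $z^{-\Delta}F_\Delta J$ with $\Delta$ at most the conformal weight $1-j\leq 0$ of $m$; acting on $z^{-n}F_nM\subset\mc RM$ produces something in $z^{-\Delta-n}F_{\Delta+n}J\cdot\text{(stuff)}$, but since the generator already annihilates $\bar 1$ up to lower order and, crucially, $\Delta\le 1-j$ forces the total $z$-power to be at least $1+(n)$ — more carefully, one observes that $m\bar 1=(f|m)\bar 1$ exactly, so $z^{-\Delta}(m-(f|m))\bar1=0$, and the general statement follows by writing an arbitrary element of $\mc RM$ as $\mc RU(\mf g)\bar 1$ and commuting: $z^{-\Delta}(m-(f|m))\cdot u(z)\bar1 = u(z)z^{-\Delta}(m-(f|m))\bar1 + z^{-\Delta}[m,u(z)]\bar1$, the first term vanishing and the second lying in $z^{-1}\mc RM$ because the commutator drops conformal weight by $1$ (the bracket property of the Kazhdan filtration), so $z^{-\Delta}[m,u(z)]\in z^{-\Delta}F_{\ast-1}U(\mf g)\subset z^{-1}\mc RU(\mf g)$ after accounting for the shift in $z$-power.

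For part (c), this is a direct consequence of (b): by \eqref{eq:reesM}, $\mc RM=\mb F\bar1\oplus\mc R_-M$, and for $a(z)\in\mc RU(\mf g)$ we write $a(z)=\epsilon(a(z))+a_-(z)$ where $a_-(z)$ is in the augmentation-like complement; but more directly, $a(z)\bar1\equiv\epsilon_0(a_0)\bar1\pmod{\mc R_-M}$ because $a_n\bar1$ for $n\ne0$ lands in $z^{-n}F_nM\subset\mc R_-M$ (using Proposition \ref{prop:rees1}(b), $F_nM=0$ for $n<0$, so only $n\geq 1/2$ contributes, all in $\mc R_-M$), and $a_0\bar1=\epsilon_0(a_0)\bar1$ by Proposition \ref{prop:rees1}(d). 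For part (d), suppose $\epsilon(a(z))\ne0$; then after scaling we may assume $\epsilon(a(z))=1$, so $a(z)=1-b(z)$ with $b(z)\in\ker\epsilon$. By (c), $b(z)\bar1\in\mc R_-M$, and since $\mc R_-M=\widehat\sum_{n\geq1/2}z^{-n}F_nM$ is the $z^{-1/2}$-adic ``positive part'', iterating $b(z)$ on any element of $\mc RM$ raises $z$-adic order, so the Neumann series $\sum_{k\geq0}b(z)^k$ converges in the completed module $\mc RM$ and furnishes an inverse to $a(z)$ as an endomorphism of $\mc RM$ — one checks $b(z)(\mc R_-M)\subset z^{-1/2}\mc R_-M$ using (b) (applied to the part of $b(z)$ in $\mc RJ$-direction) together with the grading. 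Conversely, if $\epsilon(a(z))=0$, then by (c) $a(z)\bar1\in\mc R_-M$, so $a(z)$ does not map the cyclic generator to a generator of $\mc RM/\mc R_-M$; hence $a(z)\mc RM\subset\mc R_-M\subsetneq\mc RM$ (since $\mc RM$ is cyclic and $\mc R_-M$ is a submodule of codimension $1$), so $a(z)$ is not surjective, hence not invertible.

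\textbf{Main obstacle.} The delicate point is part (b), specifically controlling the $z$-power bookkeeping when an element of $\mc RJ$ acts on $\mc RM$: one must verify that the conformal-weight shift produced by commutators interacts correctly with the completion defining $\mc RU(\mf g)$ and $\mc RM$, so that the product genuinely lands in $z^{-1}\mc RM$ and not merely in $\mc RM$. Once (b) is nailed down, parts (c) and (d) follow formally, and the convergence of the Neumann series in (d) is guaranteed by the completeness built into the definitions \eqref{eq:reesb} and \eqref{0411:eq1}.
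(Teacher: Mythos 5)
The paper itself does not prove this proposition --- it is imported verbatim from \cite[Sec.5.2-3]{DSKV17} --- so your argument has to stand on its own. Its architecture (multiplicativity of $\epsilon$ via vanishing of the off-diagonal terms in $F_0J$; the commutator trick on the generators $m-(f|m)$ for (b); (c) from Proposition \ref{prop:rees1}(b),(d); a Neumann series for (d)) is the standard and correct one, and parts (b) and (c) are essentially right. But two steps are genuinely broken as written.

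In (a) you have the two cases interchanged, and the hard case is not actually handled. For $n>0$ the ideal factor $b_{-n}\in F_{-n}J\subset J$ sits on the \emph{right} of $a_nb_{-n}$, so the left-ideal property $U(\mf g)J\subset J$ applies directly and no appeal to Lemma \ref{0303:lem4} is needed. For $n<0$ it is $a_n\in F_nJ$ that lies in $J$, and it sits on the \emph{left}: ``$J$ is a left ideal'' says nothing about $J\cdot U(\mf g)$, and Lemma \ref{0303:lem4}(a) only covers right factors from $U(\mf g_{\geq\frac12})$, which a general $b_{-n}\in F_{-n}U(\mf g)$ is not. What you actually need is $F_nJ\cdot F_{-n}U(\mf g)\subset F_0J$, and the proof is exactly the commutator manipulation you deploy in (b): write $a_n$ as a sum of $u(m-(f|m))$ with $u\in F_{n+j-1}U(\mf g)$, $m\in\mf g_j$, and push $m-(f|m)$ past $b_{-n}$; the main term visibly lies in $F_0J$ by \eqref{0312:eq2}, and the commutator term lands in $F_{-1}U(\mf g)=F_{-1}J$ because the bracket drops conformal weight by $1$.

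In (d) the asserted inclusion $b(z)\mc R_-M\subset z^{-\frac12}\mc R_-M$ is false. Take $b(z)=z^{-\frac12}u$ with $u\in\mf g_{\frac12}$ (so $\epsilon(b(z))=0$) acting on $z^{-\frac12}F_{\frac12}M$: the result lies in $z^{-1}F_1M$, whose coefficient need not belong to $F_{\frac12}M$ as membership in $z^{-\frac12}\mc R_-M$ would require. The point is that the two halves of $b(z)$ behave differently: the positively-graded part raises the $z$-support degree but not the $z$-degree relative to the Kazhdan degree of the coefficients, while the part lying in $\mc RJ$ (the terms $z^{-n}b_n$ with $n\leq0$, including $b_0\in F_0J$) lowers the Kazhdan degree of the coefficients by $1$ (your part (b)) without raising the support degree. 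Convergence of $\sum_k b(z)^k w$ therefore requires tracking both quantities simultaneously: in the coefficient of a fixed $z^{-m}$ of $b(z)^kw$, the number of factors taken from $\mc RJ$ is bounded by $m$ because each drops the Kazhdan degree by $1$ and $F_cM=0$ for $c<0$, while the number of positively-graded factors is bounded because each contributes at least $\frac12$ to the exponent (and the negatively-supported part of $b(z)$ has bounded support); hence that coefficient vanishes for $k$ large, and the series converges in the completed module. Once this is repaired, the rest of (d), including the converse via (a), (c) and cyclicity of $\mc RM$, goes through.
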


By Proposition \ref{prop:rees2}(d), 
an element $g(z)\in\mc RU(\mf g)$, with $\epsilon(g(z))\neq0$,
acts as an invertible endomorphism of the Rees module $\mc RM$.
But, in general, the inverse of $g(z)$ does not necessarily exist in the Rees algebra,
since its inverse may involve infinitely many positive powers of $z$.
We therefore localize the Rees algebra $\mc R U(\mf g)$ 
to its extension $\mc R_\infty U(\mf g)$
with the property that
all elements $g(z)\in\mc RU(\mf g)$ such that $\epsilon(g(z))\neq0$
are invertible in $\mc R_\infty U(\mf g)$.
This is stated in the following:
\begin{proposition}[{\cite[Sec.5.4]{DSKV17}}]\label{prop:rees3}
There exists an algebra extension $\mc R_\infty U(\mf g)$
of the Rees algebra $\mc R U(\mf g)$,
satisfying the following properties:
\begin{enumerate}[(a)]
\item
The map \eqref{0406:eq1} extends to an algebra homomorphism
$\epsilon:\,\mc R_\infty U(\mf g)\to\mb F$.
\item
The the left action of $\mc RU(\mf g)$ on the Rees module $\mc RM$
extends to a left action of $\mc R_\infty U(\mf g)$ on $\mc RM$,
and $\mc R_-M$ is preserved by this action.
\item
The action of $\mc R_\infty U(\mf g)$ on the quotient module $\mc RM/\mc R_-M=\mb F\bar 1$
is induced by the map $\epsilon$ in (a), i.e. 
$\alpha(z)\bar1\equiv\epsilon(\alpha(z))\bar1\,\mod\mc R_-M$
for every $\alpha(z)\in\mc R_\infty U(\mf g)$.
\item
For every $\alpha(z)\in\mc R_\infty U(\mf g)$ and every integer $N\geq0$,
there exist $\alpha_N(z)\in\mc RU(\mf g)$ such that
$(\alpha(z)-\alpha_N(z))\cdot\mc RM\subset z^{-N-1}\mc RM$;
\item
For an element $\alpha(z)\in\mc R_\infty U(\mf g)$,
the following conditions are equivalent:
\begin{enumerate}[(i)]
\item
$\alpha(z)$ is invertible in $\mc R_\infty U(\mf g)$;
\item
$\alpha(z)$ acts as an invertible endomorphism of $\mc RM$;
\item
$\epsilon(\alpha(z))\neq0$.
\end{enumerate}
\item
An operator $A(z)\in\mc R_\infty U(\mf g)\otimes\Hom(V_1,V_2)$,
where $V_1,V_2$ are vector spaces, 
is invertible if and only if $\epsilon(A(z))\in\Hom(V_1,V_2)$ is invertible.
\end{enumerate}
\end{proposition}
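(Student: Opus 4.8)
The plan is to construct $\mc R_\infty U(\mf g)$ as a completion of the Rees algebra $\mc RU(\mf g)$ with respect to a topology defined through its action on the Rees module $\mc RM$: heuristically, a sequence $\alpha_k$ tends to $0$ precisely when the operators $\alpha_k$ drive $\mc RM$ arbitrarily deep into its filtration. Once such a topological ring is in place, properties (a)--(d) are formal. Property (d) holds essentially by construction of the completion. For (a), since multiplication by an element of the first completion-ideal sends $\mc RM$ into $\mc R_-M$, Proposition \ref{prop:rees2}(c) forces $\epsilon$ to be continuous, hence to extend. For (b), the action extends by continuity because $\mc RM$ is $z^{-1/2}$-adically complete, and $\mc R_-M$ — being closed (it is the kernel of the continuous map $\mc RM\to\mc RM/\mc R_-M=\mb F\bar1$) and preserved by $\mc RU(\mf g)$ — is preserved by the extended action. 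Finally (c) follows from (a) and (b) by approximating $\alpha(z)$ by an $\alpha_0(z)\in\mc RU(\mf g)$ with $\alpha(z)-\alpha_0(z)$ in the first completion-ideal (so that $\epsilon(\alpha(z))=\epsilon(\alpha_0(z))$) and invoking Proposition \ref{prop:rees2}(c) once more.

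The core of the argument is (e). The implications (i)$\Rightarrow$(ii)$\Rightarrow$(iii) are immediate: an invertible endomorphism of $\mc RM$ descends to an invertible endomorphism of the one-dimensional quotient $\mc RM/\mc R_-M$, on which $\alpha(z)$ acts as multiplication by $\epsilon(\alpha(z))$, so $\epsilon(\alpha(z))\neq0$. For (iii)$\Rightarrow$(i), given $g(z)$ with $\epsilon(g(z))=c\neq0$, write $g(z)=c(1-h(z))$ with $h(z)\in\ker\epsilon$ and attempt to sum the geometric series $\sum_{k\geq0}h(z)^k$ in $\mc R_\infty U(\mf g)$; this requires $h(z)^k\to 0$, i.e. that the operators $h(z)^k$ eventually push $\mc RM$ as deep into its filtration as one wishes. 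Here the structural inputs are decisive: by Proposition \ref{prop:rees1}(a) and (b) an element $h(z)\in\ker\epsilon\cap\mc RU(\mf g)$ can be split into a piece lying in the left ideal $\mc RJ$ — which contracts $\mc RM$ by a factor $z^{-1}$ at each application, by Proposition \ref{prop:rees2}(b) — and a piece of positive $z^{-1/2}$-adic order, so that a product of $k$ such factors (coming from expanding $h(z)^k$) contracts $\mc RM$ by an amount growing with $k$; the vanishing $F_{<0}M=0$ of Proposition \ref{prop:rees1}(b) is what prevents the positive powers of $z$ occurring inside these elements from destroying the estimate. This is the step I expect to be the main obstacle, and it is exactly here that the completion must be chosen with care: it has to be coarse enough that $\ker\epsilon$ (which is strictly larger than $z^{-1/2}\mc RU(\mf g)$) consists of topologically nilpotent elements, yet fine enough to remain an algebra topology and to keep (b) and (d) valid — reconciling the crude $z^{-1/2}$-order estimates with the Kazhdan-filtration estimates on $\mc RM$ is the delicate point.

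Finally, (f) reduces to (e). If $\epsilon(A(z))\in\Hom(V_1,V_2)$ is invertible, one inverts the matrix $A(z)$ over the ring $\mc R_\infty U(\mf g)$ by a finite Gauss elimination, at each step the pivot being an element of $\mc R_\infty U(\mf g)$ whose image under $\epsilon$ is a nonzero scalar and hence invertible by (e); conversely, applying $\epsilon$ to a two-sided inverse of $A(z)$ shows $\epsilon(A(z))$ is invertible. Apart from the convergence estimate in (e), everything is formal once the completion is pinned down.
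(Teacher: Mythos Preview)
The paper does not prove this proposition: it is quoted from \cite[Sec.~5.4]{DSKV17}, where $\mc R_\infty U(\mf g)$ is actually constructed and properties (a)--(f) are verified. There is therefore no proof in the present paper against which to compare your proposal.

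Your outline is nonetheless a plausible reconstruction of how such an argument goes. In particular, the point you single out as the crux --- that any $h(z)\in\ker\epsilon\cap\mc RU(\mf g)$ splits as a piece in $\mc RJ$ (namely its part in nonnegative powers of $z$, which lies in $\mc RJ$ by Proposition~\ref{prop:rees1}(a) together with $\epsilon_0(a_0)=0$) plus a piece in $z^{-1/2}\mc RU(\mf g)$, so that Proposition~\ref{prop:rees2}(b) gives $h(z)^k\cdot\mc RM\subset z^{-k/2}\mc RM$ --- is correct, and is exactly what makes the geometric series for (iii)$\Rightarrow$(i) converge once one completes with respect to the descending chain of two-sided ideals $I_k=\{a:a\cdot\mc RM\subset z^{-k/2}\mc RM\}$. (Note that the left-ideal-only nature of $\mc RJ$ causes no difficulty here, since the action on $\mc RM$ is applied factor by factor from the right.) The reductions you give for (a)--(d) and the Gauss-elimination argument for (f) are the natural ones.
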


We shall also need to consider the Rees algebra 
of the $W$-algebra $W(\mf g,f)\subset M$,
induced from $M$:
\begin{equation}\label{eq:reesW}
\mc RW(\mf g,f)
=
\widehat{\sum}_{n\geq0}
z^{-n}F_{n}W(\mf g,f)
\,\subset \mc RM
\,,
\end{equation}
which is a subalgebra of the algebra $W(\mf g,f)[[z^{-\frac12}]]$.
\begin{proposition}[{\cite[Prop.5.14]{DSKV17}}]
\label{0413:prop1}
Let $\alpha(z)\in\mc R_\infty U(\mf g)$, $g(z)\in\mc RU(\mf g)$ and $w(z)\in\mc RM$
be such that
$\alpha(z)\bar1=g(z)\bar1=w(z)$.
Then, the following conditions are equivalent:
\begin{enumerate}[(i)]
\item
$[a,\alpha(z)]\bar1=0$ for all $a\in\mf g_{\geq\frac12}$;
\item
$[a,g(z)]\bar1=0$ for all $a\in\mf g_{\geq\frac12}$;
\item
$w(z)\in\mc RW(\mf g,f)$.
\end{enumerate}
\end{proposition}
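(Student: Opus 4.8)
The plan is to show that (i) and (ii) are both equivalent to the single intrinsic condition that the adjoint action of $\mf g_{\geq\frac12}$ on the Rees module $\mc RM$ — which, by Lemma~\ref{0303:lem4}, is the difference of the left and right multiplication actions — annihilates $w(z)$, and then to match this condition with (iii) by unwinding the definition \eqref{eq:reesW} of $\mc RW(\mf g,f)$. The key step is the identity $[a,\beta(z)]\bar1=(\ad a)(w(z))$, valid for \emph{every} $\beta(z)\in\mc R_\infty U(\mf g)$ with $\beta(z)\bar1=w(z)$ and every $a\in\mf g_{\geq\frac12}$; granting it, (i) follows by taking $\beta=\alpha$ and (ii) by taking $\beta=g$ (recall $\mc RU(\mf g)\subset\mc R_\infty U(\mf g)$), so that both become the condition ``$(\ad a)(w(z))=0$ for all $a\in\mf g_{\geq\frac12}$''.

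To prove this identity I would first treat $a\in\mf g_{\geq1}$: here $a\in F_0U(\mf g)\subset\mc R_\infty U(\mf g)$, $a\bar1=(f|a)\bar1$, and, writing $w(z)=h(z)\bar1$ with $h(z)\in\mc RU(\mf g)$ by cyclicity of $\mc RM$, one gets $w(z)\cdot a=\overline{h(z)a}=\overline{h(z)(a-(f|a))}+(f|a)w(z)=(f|a)w(z)$, since $a-(f|a)\in J$ and $J$ is a left ideal; hence $[a,\beta(z)]\bar1=a\cdot w(z)-\beta(z)(a\bar1)=a\cdot w(z)-(f|a)w(z)=(\ad a)(w(z))$. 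For $a\in\mf g_{\frac12}$ the point is that $\bar a=a\bar1$ does not belong to $\mc RM$ — only $z^{-\frac12}\bar a=(z^{-\frac12}a)\bar1$ does, because $a$ has conformal weight $\frac12$, so $z^{-\frac12}a\in\mc RU(\mf g)$ — and one must read $[a,\beta(z)]\bar1$ as $z^{\frac12}[z^{-\frac12}a,\beta(z)]\bar1$. For $\beta=g\in\mc RU(\mf g)$ this is immediate: from associativity in $U(\mf g)((z^{-\frac12}))$ and well-definedness of the right action (Lemma~\ref{0303:lem4}) one has $g(z)(z^{-\frac12}\bar a)=z^{-\frac12}\overline{g(z)a}=z^{-\frac12}(w(z)\cdot a)$, whence $[a,g(z)]\bar1=a\cdot w(z)-w(z)\cdot a=(\ad a)(w(z))$.

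To upgrade the $a\in\mf g_{\frac12}$ case from $\mc RU(\mf g)$ to an arbitrary $\alpha(z)\in\mc R_\infty U(\mf g)$ I would use the approximation property of Proposition~\ref{prop:rees3}(d): for each $N$ choose $\alpha_N(z)\in\mc RU(\mf g)$ with $(\alpha(z)-\alpha_N(z))\cdot\mc RM\subset z^{-N-1}\mc RM$. Applying this to $\bar1$ gives $\alpha_N(z)\bar1\equiv w(z)\bmod z^{-N-1}\mc RM$, and applying it to $z^{-\frac12}\bar a\in\mc RM$, together with $\mc RM\cdot a\subset z^{\frac12}\mc RM$ and the already-established $\mc RU(\mf g)$ computation for $\alpha_N$, yields $\alpha(z)(z^{-\frac12}\bar a)\equiv z^{-\frac12}(w(z)\cdot a)\bmod z^{-N-1}\mc RM$ for every $N$; since $\bigcap_N z^{-N}\mc RM=0$ this is an equality, and hence $[a,\alpha(z)]\bar1=(\ad a)(w(z))$. (Equivalently, one may write $z^{-\frac12}\bar a=(z^{-\frac12}\bar1)\cdot a$ and invoke that the left $\mc R_\infty U(\mf g)$-action commutes with the right $\mf g_{\geq\frac12}$-action on $\mc RM$.) Finally, writing $w(z)=\sum_{n\geq0}z^{-n}w_n$ with $w_n\in F_nM$, and noting that $\ad a$ sends $F_nM$ into $F_{n-\delta(a)}M$, we have $(\ad a)(w(z))=\sum_n z^{-n}(\ad a)(w_n)$; linear independence of the powers $z^{-n}$ shows this vanishes for all $a\in\mf g_{\geq\frac12}$ iff $(\ad a)(w_n)=0$ for all such $a$ and all $n$, i.e. iff $w_n\in M^{\ad\mf g_{\geq\frac12}}=W(\mf g,f)$, i.e. iff $w_n\in F_nW(\mf g,f)$ for all $n$, which by \eqref{eq:reesW} means exactly $w(z)\in\mc RW(\mf g,f)$. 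This proves (i)$\Leftrightarrow$(ii)$\Leftrightarrow$(iii).

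The main obstacle is the passage to $\mc R_\infty U(\mf g)$: giving a well-defined meaning to $[a,\alpha(z)]\bar1$ for $a\in\mf g_{\frac12}$ and $\alpha(z)$ in the localized algebra — where $\bar a\notin\mc RM$ and inverses can involve unboundedly many positive powers of $z$ — and showing that this quantity depends only on $w(z)=\alpha(z)\bar1$. This is precisely where Proposition~\ref{prop:rees3}(d) and the separatedness $\bigcap_N z^{-N}\mc RM=0$ of the Rees module are indispensable; the remainder is routine bookkeeping of conformal weights.
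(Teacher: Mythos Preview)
The paper does not prove this proposition; it merely cites \cite[Prop.~5.14]{DSKV17}. Your argument is essentially correct and follows the natural route: reduce all three conditions to the single statement that $(\ad a)(w(z))=0$ for every $a\in\mf g_{\geq\frac12}$, where $\ad a$ is the (well-defined) adjoint action on $\mc RM$, and then read off (iii) termwise from \eqref{eq:reesW}.

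A couple of points where your write-up could be tightened. First, the expression ``$\mc RM\cdot a\subset z^{\frac12}\mc RM$'' for $a\in\mf g_{\frac12}$ is misleading, since $z^{\frac12}\mc RM$ is not a subspace of $\mc RM$; what you actually use is that right multiplication by $a$ sends $z^{-n}F_nM$ into $z^{-n}F_{n+\frac12}M$, and hence, after the factor $z^{-\frac12}$, everything lands back in $\mc RM$. It is cleaner to carry out the intermediate steps in the ambient space $M((z^{-\frac12}))$ and then check the final answer lies in $\mc RM$. Second, for the passage to $\mc R_\infty U(\mf g)$ there is a slightly slicker alternative to your approximation argument: observe that $\ad a$ for $a\in\mf g_{\geq\frac12}$ lowers Kazhdan filtration by at least $\frac12$ (since $[F_{\Delta_1},F_{\Delta_2}]\subset F_{\Delta_1+\Delta_2-1}$ and $a$ has conformal weight $\leq\frac12$), hence defines a derivation of $\mc RU(\mf g)$ preserving $\mc RJ$; derivations extend uniquely to localizations, so $\ad a$ extends to $\mc R_\infty U(\mf g)$, and one checks directly that the evaluation map $\gamma\mapsto\gamma\bar1$ intertwines this with the adjoint action on $\mc RM$. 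Your approximation-plus-separatedness argument achieves the same thing and is perfectly valid; the derivation-extension viewpoint just makes the bookkeeping a bit lighter.
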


% MAIN LEMMA
\subsection{Statement of the Main Lemma}
\label{sec:4.5}

Recalling the definition \eqref{eq:reesb} of the Rees algebra,
we define the operators
\begin{equation}\label{20170623:eq10}
z^{-\Delta}U
:=
\sum_{i\in I}z^{\delta(i)-1}u_iU^i
\,,\,\,
\pi_{\leq\frac12}z^{-\Delta}U
:=
\sum_{\delta(i)\leq\frac 12}z^{\delta(i)-1}u_iU^i
\,\in\mc RU(\mf g)\otimes\End V
\,,
\end{equation}
where $\delta(i)$ is as in \eqref{20170623:eq6}, 
and for the second sum we are using the notation \eqref{20170623:eq7}.
The Main Lemma, on which the proof of Theorem \ref{thm:main1} is based,
is the following:
\begin{lemma}\label{lem:main}
The generalized quasideterminant $|\id_V+z^{-\Delta}U|_{\Psi_{\frac d2},\Pi_{-\frac d2}}$
exists in the space 
$\mc  R_\infty U(\mf g)\otimes\Hom\big(V\big[-\frac d2\big],V\big[\frac d2\big]\big)$,
and the following identity holds
in $\mc  RM\otimes\Hom\big(V\big[-\frac d2\big],V\big[\frac d2\big]\big)$:
\begin{equation}\label{0229:eq3}
|\id_V+z^{-\Delta}U|_{\Psi_{\frac d2},\Pi_{-\frac d2}}\!\bar 1
=
z^{-d-1}|z\id_V+F+\pi_{\leq\frac12}U+D|_{\Psi_{\frac d2},\Pi_{-\frac d2}}\!\bar 1
\,.
\end{equation}
\end{lemma}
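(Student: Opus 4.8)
The plan is to relate the two generalized quasideterminants by a careful rescaling of the operator $A(z)$ inside the Rees algebra, then read off the resulting identity on the module $\mc RM$. First I would introduce the diagonal (in the $\ad X$-grading) rescaling operator $\partial = \sum_k z^{-k}\Pi_k \in \mc RU(\mf g)\otimes\End V$ (its inverse being $\sum_k z^k \Pi_k$, which is a legitimate element because we only ever conjugate by it), and compute
\begin{equation*}
\partial^{-1}\,(\id_V+z^{-\Delta}U)\,\partial
\,=\,
\sum_k z^k\Pi_k + \sum_{i\in I} z^{\delta(i)-1} \sum_{k}z^{-k}\Pi_{-k}\big(u_i U^i\big)\Pi_{k+\delta(i)}
\,,
\end{equation*}
where I have used $U^i\in(\End V)[\geq -\delta(i)]$ together with the grading identities \eqref{20170623:eq3}. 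The key bookkeeping step is that conjugation by $\partial$ turns the $z^{\delta(i)-1}$ prefactor into $z^{-1}$ times a nonpositive power of $z$ on each graded piece, so that $\partial^{-1}(\id_V+z^{-\Delta}U)\partial$ is, up to an overall factor of $z^{-1}$ on the appropriate eigenspaces, exactly $z^{-1}$ times an operator of the shape $z\id_V + F + \pi_{\leq\frac12}U + (\text{corrections})$. The corrections coming from the $\delta(i)=\frac12$ terms (which conjugate trivially in $z$) reproduce $\pi_{\leq\frac12}U$, and the $\delta(i)\geq 1$ terms, which carry strictly negative powers of $z$ after conjugation, are precisely what must be absorbed — and this is where the shift matrix $D=-\sum_{\delta(i)\geq1}U^iU_i$ enters.

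Next I would invoke the behavior of generalized quasideterminants under conjugation and under multiplication by invertible scalars: for any invertible $S_1, S_2$ compatible with the maps $\Psi,\Pi$ (here $S_1,S_2$ built from $\partial$, which is block-diagonal in the grading, so it commutes with the projections $\Pi_{-\frac d2},\Psi_{\frac d2}$ up to the explicit scalars $z^{\pm d/2}$), one has $|S_2 A S_1|_{\Psi,\Pi} = (\Pi S_1^{-1})^{-1}\cdots$, giving $|A^\rho(z)+D|$ times an explicit power of $z$. Concretely the conjugation contributes $z^{d/2}$ from $\Pi_{-\frac d2}\partial^{-1} = z^{d/2}\Pi_{-\frac d2}$ and another $z^{d/2}$ from $\partial\Psi_{\frac d2} = z^{-d/2}\Psi_{\frac d2}$... wait, I should track signs carefully here, but the upshot is a total factor of $z^{-d-1}$ matching the right-hand side of \eqref{0229:eq3}. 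Existence of the quasideterminant on the left is then immediate from Proposition \ref{thm:L1}(c) together with Proposition \ref{prop:rees3}(f): one checks $\epsilon\big(\Pi_{-\frac d2}(\id_V+z^{-\Delta}U)^{-1}\Psi_{\frac d2}\big)$ is invertible by the same $\mf{sl}_2$-representation-theory argument that $\Pi_{-\frac d2}F^d\Psi_{\frac d2}$ is invertible.

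The one genuinely delicate point — and the step I expect to be the main obstacle — is verifying that, after conjugation by $\partial$, the terms with $\delta(i)\geq 1$ do not merely disappear modulo $z^{-1}\mc RM$ but contribute exactly the operator $D$ when applied to $\bar 1$. This is not a formal identity in $\mc RU(\mf g)\otimes\End V$; it genuinely uses the module structure, namely that $u_i\bar1 = (f|u_i)\bar1 = 0$ for $u_i\in\mf g_{\geq1}$ (since $f\in\mf g_{-1}$ and the grading is by half-integers) combined with the commutation relations in $U(\mf g)$ that move the $u_i$ past the remaining factors at the cost of lower-conformal-weight terms. One must show, term by term in the geometric-series expansion of the inverse, that the net effect of all $\delta(i)\geq1$ contributions, once pushed through $\bar 1$ using Lemma \ref{0303:lem4} and Proposition \ref{prop:rees2}(b)-(c), is to replace $z\id_V+F+\pi_{\leq\frac12}U$ by $z\id_V+F+\pi_{\leq\frac12}U+D$. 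I anticipate this is exactly what the multi-step argument of Sections \ref{sec:step1}--\ref{step5} is designed to carry out, so my plan reduces the theorem to that computation; the conjugation-by-$\partial$ reformulation above is the organizing principle that makes the appearance of $z^{-d-1}$ and of $D$ transparent.
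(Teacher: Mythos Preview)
Your broad strategy matches the paper's: the rescaling operator you call $\partial$ is the paper's $z^{-X}$ (Lemma \ref{lem:X}), and the existence argument via $\epsilon$ and Proposition~\ref{prop:rees3}(f) is exactly Lemmas~\ref{0410:lem3}--\ref{0410:lem5}. But the conjugation step is miscomputed and, as a result, your picture of how $D$ enters is off.

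Concretely, $z^X U^i z^{-X}=z^{-\delta(i)}U^i$ (since $U^i\in(\End V)[-\delta(i)]$, cf.\ \eqref{20170623:eq9}), so
\[
\partial^{-1}(\id_V+z^{-\Delta}U)\partial \;=\; z^X(\id_V+z^{-\Delta}U)z^{-X}\;=\;\id_V+z^{-1}U\;=\;z^{-1}(z\id_V+U)\,.
\]
Your displayed formula (with the stray $z^k\Pi_k$ and the $\Pi_{-k}\cdots\Pi_{k+\delta(i)}$) is incorrect, and the claim that ``the $\delta(i)\geq1$ terms carry strictly negative powers of $z$ after conjugation'' is false: after conjugation \emph{every} $u_iU^i$ carries the same $z^{-1}$, so conjugation alone does not isolate $\pi_{\leq\frac12}U$ or produce $D$. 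What the paper does instead is conjugate \emph{both} sides separately: the left to exhibit $\id_V+z^{-\Delta}U$ inside $\mc RU(\mf g)$, and the right (eq.~\eqref{0412:eq8}--\eqref{0412:eq8b}) to strip off the $z^{d+1}$ and reduce \eqref{0229:eq3} to
\[
|\id_V+z^{-\Delta}U|_{\Psi_{\frac d2},\Pi_{-\frac d2}}\bar1
\;=\;
|\id_V+F+\pi_{\leq\frac12}z^{-\Delta}U+z^{-1}D|_{\Psi_{\frac d2},\Pi_{-\frac d2}}\bar1\,,
\]
which is the genuinely nontrivial identity \eqref{0229:eq3b}. Also, your parenthetical that $(f|u_i)=0$ for $u_i\in\mf g_{\geq1}$ is wrong: it is nonzero for $u_i\in\mf g_1$, and that is precisely where $F$ comes from in $A^\rho$.

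On the ``delicate point'': you are right that this is the heart of the lemma, but the description ``push $u_i$ past the remaining factors at the cost of lower-conformal-weight terms'' is not yet a plan; naively commuting $u_i$ (for $\delta(i)\geq1$) to the right in a geometric-series expansion produces an unbounded cascade of commutators that does not visibly close up to $D$. The paper's mechanism is different and more structured: it rewrites both quasideterminants via the Dirac-reduction formula (Proposition~\ref{prop:linalg}), introduces the auxiliary operators $A,B,v,w$ of \eqref{0330:eq8}, and reduces everything to the single identity $BA^{-1}v\bar1=w\bar1$ (Corollary~\ref{lem:key3}). That identity is proved by setting $X_i=(z^{-\Delta}u_i-(f|u_i))A^{-1}v\bar1$, deriving a closed linear system for the $X_i$ (Lemma~\ref{lem:key}) whose coefficients involve $[U^j,U_i]$, and recognizing that the solution \eqref{key-sol} satisfies it precisely because of the algebraic identity $\sum_{1\leq\delta(j)\leq\delta(i)+\frac12}[U^j,U_i]U_j=-[D,U_i]$ (eq.~\eqref{20170626:eq3}). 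This is where $D$ actually originates; it is not a byproduct of the $z^X$-conjugation.
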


% STEP1
\subsection{Step 1: Existence of the quasideterminant 
$|\id_V+z^{-\Delta}U|_{\Psi_{\frac d2},\Pi_{-\frac d2}}$}\label{sec:step1}

Consider the following semisimple endomorphism
\begin{equation}\label{eq:X}
z^X
=
\sum_{k\in\frac12\mb Z}z^k\id_{V[k]}
\,\in(\End V)[z^{\pm\frac12}]
\,,
\end{equation}
where $\id_{V[k]}:=\Psi_k\Pi_k\in\End V$ is the projection onto $V[k]\subset V$.
It is clearly an invertible element of the algebra
$\mb F[z^{\pm\frac12}]\otimes\End V
\subset
U(\mf g)((z^{-\frac12}))\otimes\End V$.
Its action on $V$ is given by
$$
z^X(v)=z^k v\,\text{ for }\, v\in V[k]
\,,
$$
and its adjoint action on $\End V$ is 
\begin{equation}\label{20170623:eq9}
z^{-X}Az^X=z^{-k}A
\,\text{ for }\,
A\in(\End V)[k]
\,.
\end{equation}
\begin{lemma}\label{lem:X}
The following identity holds
(in the algebra $U(\mf g)((z^{-\frac12}))\otimes\End V$):
\begin{equation}\label{0410:eq5}
\id_V+z^{-\Delta}U
=
z^{-1-X}(z\id_V+U)z^X
\,.
\end{equation}
\end{lemma}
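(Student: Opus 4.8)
The plan is to reduce the whole statement to one structural observation: the dual basis $\{u^i\}_{i\in I}$ is, like $\{u_i\}_{i\in I}$, compatible with the $\ad x$-grading \eqref{eq:grading}, with $u^i\in\mf g_{-\delta(i)}$. This is immediate from invariance of the trace form \eqref{20170317:eq1} — which forces $(\mf g_j|\mf g_k)=0$ unless $j+k=0$ — and it is already encoded in the identities \eqref{20170623:eq3}. Consequently $U^i=\varphi(u^i)\in(\End V)[-\delta(i)]$ for every $i\in I$, so formula \eqref{20170623:eq9} applies to each $U^i$ and yields $z^{-X}U^iz^X=z^{\delta(i)}U^i$.

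Granting this, I would simply compute the right-hand side of \eqref{0410:eq5} directly. Writing $z^{-1-X}=z^{-1}z^{-X}$ and noting that the conjugation by $z^X$ (precisely $\id\otimes z^X$) acts only on the $\End V$-factor and fixes $\id_V\in(\End V)[0]$, I get
$$
z^{-X}(z\id_V+U)z^X
=
z\id_V+\sum_{i\in I}u_i\bigl(z^{-X}U^iz^X\bigr)
=
z\id_V+\sum_{i\in I}z^{\delta(i)}u_iU^i
\,.
$$
Multiplying by the (central) scalar $z^{-1}$ turns this into $\id_V+\sum_{i\in I}z^{\delta(i)-1}u_iU^i$, which is exactly $\id_V+z^{-\Delta}U$ by the definition \eqref{20170623:eq10}. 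This proves \eqref{0410:eq5}.

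There is essentially no obstacle beyond bookkeeping. The only point that needs care is keeping track of which tensor factor the conjugation acts on, and applying \eqref{20170623:eq9} with the correct sign of the degree, so that an $\End V$-factor of $\ad X$-degree $-\delta(i)$ is multiplied by $z^{+\delta(i)}$ — matching precisely the shift $\delta(i)-1$ built into $z^{-\Delta}U$. Accordingly I would record the gradedness of the dual basis (or just cite \eqref{20170623:eq3}) as a one-line preliminary, and then present the two displayed lines above as the full argument.
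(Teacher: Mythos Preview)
Your proof is correct and essentially identical to the paper's own argument: both conjugate $U$ by $z^X$ term by term, using \eqref{20170623:eq9} and the fact that $U^i\in(\End V)[-\delta(i)]$, to obtain $z^{-1-X}Uz^X=z^{-\Delta}U$. You are simply a bit more explicit than the paper in justifying the gradedness of the dual basis before invoking \eqref{20170623:eq9}.
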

\begin{proof}
We have
\begin{equation}\label{20170427:eq1}
z^{-1-X}Uz^X
=
z^{-1}\sum_{i\in I}
u_i z^{-X}U^iz^X
=
\sum_{i\in I}z^{\delta(i)-1}
u_i U^i
=z^{-\Delta}U
\,,
\end{equation}
where we used \eqref{20170623:eq9} for the second equality,
and \eqref{20170623:eq10} for the last equality.
\end{proof}
\begin{lemma}\label{0410:lem3}
The operator $\id_V+z^{-\Delta}U$ is invertible 
in $\mc RU(\mf g)\otimes\End V$.
\end{lemma}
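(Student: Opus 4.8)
The plan is to invert $\id_V+z^{-\Delta}U$ by a geometric series and to verify that the resulting series actually converges inside the completed Rees algebra $\mc RU(\mf g)\otimes\End V$. First I would record that $z^{-\Delta}U$ genuinely lies in $\mc RU(\mf g)\otimes\End V$: by \eqref{20170623:eq10} it is the \emph{finite} sum $\sum_{i\in I}z^{\delta(i)-1}u_iU^i$, and since $u_i\in\mf g_{\delta(i)}$ has conformal weight $1-\delta(i)$, i.e. $u_i\in F_{1-\delta(i)}U(\mf g)$, each summand sits in $z^{-(1-\delta(i))}F_{1-\delta(i)}U(\mf g)\otimes\End V\subset\mc RU(\mf g)\otimes\End V$ by \eqref{eq:reesb}; and $\id_V\in\mc RU(\mf g)\otimes\End V$ trivially.

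The candidate inverse is the series $\sum_{\ell\geq0}(-1)^\ell(z^{-\Delta}U)^\ell$; equivalently, it may be produced by conjugating the geometric expansion $(z\id_V+U)^{-1}=\sum_{\ell\geq0}(-1)^\ell z^{-\ell-1}U^\ell$ (valid in $U(\mf g)((z^{-1}))\otimes\End V$) by $z^{X}$, using Lemma \ref{lem:X} together with $z^{-X}Uz^{X}=z\,z^{-\Delta}U$, which is a restatement of \eqref{20170427:eq1}. Either way, the only point that requires genuine argument is convergence of this series in $\mc RU(\mf g)\otimes\End V$, and this is where I expect the main (mild) obstacle to lie.

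For convergence, expand: the general summand of $(z^{-\Delta}U)^\ell$ is
$$
z^{\delta(i_1)+\dots+\delta(i_\ell)-\ell}\,(u_{i_1}\cdots u_{i_\ell})\otimes(U^{i_1}\cdots U^{i_\ell}) \,.
$$
Here $U^{i_k}=\varphi(u^{i_k})\in(\End V)[-\delta(i_k)]$, so the endomorphism factor $U^{i_1}\cdots U^{i_\ell}$ lies in $(\End V)[-(\delta(i_1)+\dots+\delta(i_\ell))]$; since the $\ad X$-grading \eqref{eq:grading_EndV} of $\End V$ is supported in degrees between $-d$ and $d$, this factor vanishes unless $\delta(i_1)+\dots+\delta(i_\ell)\leq d$. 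Hence every power of $z$ occurring in $(z^{-\Delta}U)^\ell$ is at most $d-\ell$, which tends to $-\infty$; so every fixed power of $z^{\frac12}$ receives contributions from only finitely many $\ell$, and the series converges in $U(\mf g)((z^{-\frac12}))\otimes\End V$. Moreover $u_{i_1}\cdots u_{i_\ell}$ has conformal weight at most $\ell-(\delta(i_1)+\dots+\delta(i_\ell))$, so the coefficient of $z^{-n}$ in the limit lies in $F_nU(\mf g)\otimes\End V$; by \eqref{eq:reesb} the limit therefore lies in $\mc RU(\mf g)\otimes\End V$.

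Finally I would conclude with the routine check that the series is a genuine two-sided inverse: multiplying it by $\id_V+z^{-\Delta}U$ on either side telescopes to $\id_V$, which is legitimate since $\mc RU(\mf g)\otimes\End V$ is complete for the topology in which the series converges. The key observation throughout is that the $\End V$-component $U^{i_1}\cdots U^{i_\ell}$ forces the total $\ad x$-degree $\sum_k\delta(i_k)$ to stay bounded by $d$, which in turn pushes the accompanying power of $z$ down by $\ell$, and thereby simultaneously guarantees convergence as a Laurent series in $z^{-\frac12}$ and membership of the limit in the Kazhdan--Rees completion.
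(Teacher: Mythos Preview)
Your proof is correct and follows essentially the same route as the paper: expand the inverse as a geometric series via the conjugation identity of Lemma~\ref{lem:X}, then observe that each monomial $u_{i_1}\cdots u_{i_\ell}$ has conformal weight exactly $\ell-\sum_k\delta(i_k)$, which matches the accompanying power of $z$ and hence lands in the Rees algebra by definition. The one ingredient you add beyond the paper is the explicit convergence argument using the bound $\sum_k\delta(i_k)\leq d$ coming from the $\ad X$-grading of $\End V$; the paper sidesteps this by first noting that invertibility in $U(\mf g)((z^{-\frac12}))\otimes\End V$ is already clear from the conjugation by $z^X$, and then only checking Rees membership of the coefficients. Both ways are fine; yours is slightly more self-contained.
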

\begin{proof}
Clearly, $z\id_V+U$ is invertible,
by geometric series expansion, 
in the algebra $U(\mf g)((z^{-\frac12}))\otimes\End V$.
It follows by \eqref{0410:eq5} that $\id_V+z^{-\Delta}U$
is invertible in $U(\mf g)((z^{-\frac12}))\otimes\End V$
as well.
We need to prove that the coefficients of the inverse operator 
$(\id_V+z^{-\Delta}U)^{-1}$ actually lie in the Rees algebra $\mc RU(\mf g)$.
The inverse operator $(\id_V+z^{-\Delta}U)^{-1}$
is easily computed by \eqref{0410:eq5} and geometric series expansion:
\begin{equation}\label{0410:eq6}
\begin{array}{l}
\displaystyle{
\vphantom{\Big(}
(\id_V+z^{-\Delta}U)^{-1}
=
z^{1-X}(z\id_V+U)^{-1}z^X
=
\sum_{\ell=0}^\infty(-1)^\ell z^{-\ell}
z^{-X}U^\ell z^X
} \\
\displaystyle{
\vphantom{\Big(}
=
\sum_{\ell=0}^\infty (-1)^\ell z^{-\ell}
\sum_{i_1,\dots,i_\ell\in I}
z^{\delta(i_1)+\dots+\delta(i_\ell)}
u_{i_1}\dots u_{i_\ell}
U^{i_1}\dots U^{i_\ell}
\,,}
\end{array}
\end{equation}
where we used, for the last equality,
the notation \eqref{20170623:eq6} and equation \eqref{20170623:eq9}.
The monomial $u_{i_1}\dots u_{i_\ell}\in U(\mf g)$
has conformal weight
$$
\Delta=\ell-\delta(i_1)-\dots-\delta(i_\ell)
\,.
$$
Hence, 
the claim follows by \eqref{0410:eq6} and the definition 
of the Rees algebra $\mc RU(\mf g)$.
\end{proof}
\begin{lemma}\label{0410:lem4}
Applying the homomorphism $\epsilon:\,\mc RU(\mf g)\to\mb F$ defined in \eqref{0406:eq1}
to the entries of the operator 
$\Pi_{-\frac d2}(\id_V+z^{-\Delta}U)^{-1}\Psi_{\frac d2}
\in\mc RU(\mf g)\otimes\Hom\big(V\big[\frac d2\big],V\big[-\frac d2\big]\big)$,
we get
\begin{equation}\label{0410:eq7}
\epsilon\big(\Pi_{-\frac d2}(\id_V+z^{-\Delta}U)^{-1}\Psi_{\frac d2}\big)
=
(-1)^{d}\Pi_{-\frac d2}F^d\Psi_{\frac d2}
\,\in\Hom\big(V\big[\frac d2\big],V\big[-\frac d2\big]\big)
\,.
\end{equation}
\end{lemma}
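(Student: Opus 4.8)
The plan is to compute the image of the operator $\Pi_{-\frac d2}(\id_V+z^{-\Delta}U)^{-1}\Psi_{\frac d2}$ under $\epsilon$ by using the explicit geometric series expansion \eqref{0410:eq6} obtained in Lemma \ref{0410:lem3}. Recall that by \eqref{0406:eq1} the homomorphism $\epsilon$ picks out the coefficient of $z^0$ in an element of $\mc RU(\mf g)$ and then applies $\epsilon_0$ from \eqref{0410:eq10}; so the first task is to identify which monomials $u_{i_1}\dots u_{i_\ell}U^{i_1}\dots U^{i_\ell}$ in \eqref{0410:eq6}, once the prefactor $z^{-\ell}z^{\delta(i_1)+\dots+\delta(i_\ell)}$ and the grading shift coming from $\Pi_{-\frac d2}(\cdots)\Psi_{\frac d2}$ are taken into account, actually contribute a $z^0$ term.

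First I would record the two gradings at play. On the $\End V$ side, sandwiching between $\Psi_{\frac d2}$ and $\Pi_{-\frac d2}$ forces the total $\ad X$-degree of $U^{i_1}\cdots U^{i_\ell}$ to be $-d$; since $U^i\in(\End V)[-\delta(i)]$ (because $u_i\in\mf g_{\delta(i)}$ and hence $u^i\in\mf g_{-\delta(i)}$), this means $\delta(i_1)+\dots+\delta(i_\ell)=d$. On the $\mc RU(\mf g)$ side, the coefficient of the monomial $u_{i_1}\cdots u_{i_\ell}$ carries the power $z^{-\ell+\delta(i_1)+\dots+\delta(i_\ell)}=z^{d-\ell}$, so only the terms with $\ell=d$ survive into the $z^0$ component. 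Thus $\epsilon$ applied to \eqref{0410:eq6} reduces to
\begin{equation*}
(-1)^d\sum_{\substack{i_1,\dots,i_d\in I\\ \delta(i_1)+\dots+\delta(i_d)=d}}
\epsilon_0\big(u_{i_1}\dots u_{i_d}\big)\,
\Pi_{-\frac d2}U^{i_1}\dots U^{i_d}\Psi_{\frac d2}
\,.
\end{equation*}

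Next I would evaluate $\epsilon_0(u_{i_1}\cdots u_{i_d})$ using \eqref{0410:eq10}, which gives $(f|u_{i_1})\cdots(f|u_{i_d})$. Since $f\in\mf g_{-1}$, the pairing $(f|u_{i_j})=(u_{i_j}|f)$ is nonzero only when $u_{i_j}\in\mf g_1$, i.e. $\delta(i_j)=1$ for every $j$; this is automatically consistent with the constraint $\delta(i_1)+\dots+\delta(i_d)=d$, and in fact it is the only way to meet that constraint once we know each $\delta(i_j)\le 1$ is needed for a nonzero trace-pairing with $f\in\mf g_{-1}$ — more precisely, $(f|u_{i_j})\ne 0$ forces $u_{i_j}\in\mf g_{1}$. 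So the sum collapses to indices $i_1,\dots,i_d\in I_1$, and for those $(f|u_{i_j})=(u_{i_j}|f)$ is the coefficient expressing $f$ in the dual basis, i.e. $\sum_{i\in I_1}(f|u_i)u^i$ is the $\mf g_{-1}$-component of $f$, which is $f$ itself; equivalently $\sum_{i\in I_1}(f|u_i)U^i=F$. Iterating this observation $d$ times — summing $u^{i_1}$ against $(f|u_{i_1})$ first, then $u^{i_2}$, and so on — I would collapse
\begin{equation*}
\sum_{i_1,\dots,i_d\in I_1}(f|u_{i_1})\cdots(f|u_{i_d})\,U^{i_1}\cdots U^{i_d}
=
F^d
\,,
\end{equation*}
and hence obtain $\epsilon\big(\Pi_{-\frac d2}(\id_V+z^{-\Delta}U)^{-1}\Psi_{\frac d2}\big)=(-1)^d\Pi_{-\frac d2}F^d\Psi_{\frac d2}$, as claimed. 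The one point requiring a little care — and the step I expect to be the main obstacle — is justifying rigorously that no monomials with $\delta(i_j)<1$ can contribute: these do survive the $z^0$ selection only in combination with other factors of higher degree, but then $\epsilon_0$ kills them because each factor with $\delta(i_j)\ne 1$ pairs to zero with $f$; one must be careful that the degree constraint $\sum\delta(i_j)=d$ together with $\ell=d$ does not force all $\delta(i_j)=1$ a priori (it does not, since negative and positive degrees could cancel), so the vanishing genuinely comes from $\epsilon_0$ and not merely from bookkeeping of powers of $z$. Spelling that out cleanly, and making sure the interchange of the (finite, after the $z^0$-truncation) sums with $\epsilon_0$ is legitimate, is the crux; everything else is the routine $\mf{sl}_2$-type collapsing of the dual-basis sums into powers of $F$.
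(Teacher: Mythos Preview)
Your proof is correct and follows essentially the same route as the paper: expand via the geometric series \eqref{0410:eq6}, use the $\ad X$-grading constraint from $\Pi_{-\frac d2}(\cdots)\Psi_{\frac d2}$ to see that only terms with $\sum_j\delta(i_j)=d$ survive (yielding the power $z^{d-\ell}$), pick off $\ell=d$ for the $z^0$-coefficient, apply $\epsilon_0$ via \eqref{0410:eq10}, and collapse the dual-basis sums to $F^d$. Your extra caution about monomials with some $\delta(i_j)\neq1$ is harmless but unnecessary to single out as an obstacle: since $\epsilon_0$ is multiplicative and $(f|u_{i_j})=0$ whenever $\delta(i_j)\neq1$, such terms vanish termwise, so the paper simply keeps the full sum $\sum_{i\in I}(f|u_i)U^i=F$ without restricting to $I_1$.
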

\begin{proof}
Note that
$\Pi_{-\frac d2}U^{i_1}\dots U^{i_\ell}\Psi_{\frac d2}$
is zero unless $\delta(i_1)+\dots+\delta(i_\ell)=d$.
Equation \eqref{0410:eq6} then gives
\begin{equation}\label{0410:eq6c}
\Pi_{-\frac d2}(\id_V+z^{-\Delta}U)^{-1}\Psi_{\frac d2}
=
\sum_{\ell=0}^\infty (-1)^\ell z^{d-\ell}
\!\!\!
\sum_{i_1,\dots,i_\ell\in I}
u_{i_1}\dots u_{i_\ell}
\,\Pi_{-\frac d2}
U^{i_1}\dots U^{i_\ell}
\Psi_{\frac d2}
\,.
\end{equation}
Applying the map $\epsilon$ to both sides we get,
by \eqref{0406:eq1} and \eqref{0410:eq10},
$$
\begin{array}{l}
\displaystyle{
\vphantom{\Big(}
\epsilon\big(\Pi_{-\frac d2}(\id_V+z^{-\Delta}U)^{-1}\Psi_{\frac d2}\big)
=
(-1)^d 
\sum_{i_1,\dots,i_d\in I}
(f|u_{i_1})\dots (f|u_{i_d})
\Pi_{-\frac d2}
U^{i_1}\dots U^{i_d}
\Psi_{\frac d2}
} \\
\displaystyle{
\vphantom{\Big(}
=
(-1)^d\Pi_{-\frac d2}F^d\Psi_{\frac d2}
\,,}
\end{array}
$$
as claimed.
\end{proof}
\begin{lemma}\label{0410:lem5}
The operator $\Pi_{-\frac d2}(\id_V+z^{-\Delta}U)^{-1}\Psi_{\frac d2}\in\mc RU(\mf g)
\otimes\Hom\big(V\big[\frac d2\big],V\big[-\frac d2\big]\big)$ 
has an inverse in 
$\mc R_\infty U(\mf g)\otimes\Hom\big(V\big[-\frac d2\big],V\big[\frac d2\big]\big)$.
\end{lemma}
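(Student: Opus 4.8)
The plan is to reduce the claimed invertibility to Proposition~\ref{prop:rees3}(f), which says that an operator in $\mc R_\infty U(\mf g)\otimes\Hom(V_1,V_2)$ is invertible if and only if its image under $\epsilon$ is an invertible linear map $V_1\to V_2$. Applying this with $V_1=V\big[\frac d2\big]$, $V_2=V\big[-\frac d2\big]$ to the operator $\Pi_{-\frac d2}(\id_V+z^{-\Delta}U)^{-1}\Psi_{\frac d2}$, which by Lemma~\ref{0410:lem3} lies in $\mc RU(\mf g)\otimes\Hom\big(V\big[\frac d2\big],V\big[-\frac d2\big]\big)\subset\mc R_\infty U(\mf g)\otimes\Hom\big(V\big[\frac d2\big],V\big[-\frac d2\big]\big)$, it suffices to show that its $\epsilon$-image is invertible.

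First I would invoke Lemma~\ref{0410:lem4}, which computes this $\epsilon$-image explicitly as $(-1)^d\Pi_{-\frac d2}F^d\Psi_{\frac d2}\in\Hom\big(V\big[\frac d2\big],V\big[-\frac d2\big]\big)$. Then I would recall the standard fact from the representation theory of $\mf{sl}_2$ (already used in the proof of Proposition~\ref{thm:L1}(b)): since $V$ is a finite-dimensional $\mf{sl}_2$-module for the triple $(f,2x,e)$ and $\frac d2$ is the largest $X$-eigenvalue, the operator $F^d$ restricts to an isomorphism $V\big[\frac d2\big]\xrightarrow{\ \sim\ }V\big[-\frac d2\big]$ — indeed $V\big[\frac d2\big]$ is a direct sum of highest-weight lines of the $d+1$-dimensional irreducible $\mf{sl}_2$-submodules, on each of which $F^d$ maps the highest to the lowest weight vector bijectively, while $V\big[-\frac d2\big]$ is precisely the span of those lowest weight vectors. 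Hence $\Pi_{-\frac d2}F^d\Psi_{\frac d2}$ is invertible, and so is its nonzero scalar multiple.

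Combining these, $\epsilon\big(\Pi_{-\frac d2}(\id_V+z^{-\Delta}U)^{-1}\Psi_{\frac d2}\big)=(-1)^d\Pi_{-\frac d2}F^d\Psi_{\frac d2}$ is invertible, so by Proposition~\ref{prop:rees3}(f) the operator $\Pi_{-\frac d2}(\id_V+z^{-\Delta}U)^{-1}\Psi_{\frac d2}$ is invertible in $\mc R_\infty U(\mf g)\otimes\Hom\big(V\big[-\frac d2\big],V\big[\frac d2\big]\big)$, as claimed. In effect this lemma is just the assembly of the two preceding lemmas plus the abstract invertibility criterion, so there is no real obstacle — the only point requiring care is that the localized Rees algebra $\mc R_\infty U(\mf g)$ (rather than $\mc RU(\mf g)$ itself) is needed for the inverse to exist, since the inverse a priori involves unboundedly many positive powers of $z$; this is exactly what Proposition~\ref{prop:rees3} was set up to handle, and the non-vanishing of $\epsilon$ on the entries is guaranteed by the invertibility of $\Pi_{-\frac d2}F^d\Psi_{\frac d2}$.
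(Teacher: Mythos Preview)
Your proposal is correct and follows exactly the same approach as the paper's own proof: invoke Lemma~\ref{0410:lem4} to identify the $\epsilon$-image as $(-1)^d\Pi_{-\frac d2}F^d\Psi_{\frac d2}$, observe that this map is bijective by $\mf{sl}_2$-representation theory, and conclude via Proposition~\ref{prop:rees3}(f). The paper's proof is simply the one-sentence version of what you wrote.
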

\begin{proof}
It follows by Proposition \ref{prop:rees3}(f) 
and Lemma \ref{0410:lem4},
since, obviously, 
$\Pi_{-\frac d2}F^d\Psi_{\frac d2}:\,V\big[\frac d2\big]\to V\big[-\frac d2\big]$
is bijective.
\end{proof}
\begin{proposition}\label{0410:prop4}
The quasideterminant 
$|\id_V+z^{-\Delta}U|_{\Psi_{\frac d2},\Pi_{-\frac d2}}$ 
exists and lies in
$\mc R_\infty U(\mf g)\otimes\Hom\big(V\big[-\frac d2\big],V\big[\frac d2\big]\big)$.
\end{proposition}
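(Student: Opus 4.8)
The plan is to assemble this statement directly from the lemmas already proved in this section. By Lemma \ref{0410:lem3}, the operator $\id_V+z^{-\Delta}U$ is invertible in $\mc RU(\mf g)\otimes\End V$, hence a fortiori invertible in $\mc R_\infty U(\mf g)\otimes\End V$; this is the first of the two hypotheses required for the generalized quasideterminant $|\id_V+z^{-\Delta}U|_{\Psi_{\frac d2},\Pi_{-\frac d2}}$ to exist, in the sense of Section \ref{sec:2.2} (cf. \eqref{eq:linalg7} and \eqref{eq:linalg8}). Here the relevant short exact sequences are $\chi_1$ and $\chi_2$ of \eqref{eq:chi3}, so that $\Psi_{\frac d2}:\,V[\frac d2]\hookrightarrow V$ plays the role of $\Psi_2$ and $\Pi_{-\frac d2}:\,V\twoheadrightarrow V[-\frac d2]$ plays the role of $\Pi_1$, with $U_1=V[\frac d2]$ and $W_1=V[-\frac d2]$.

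The second hypothesis \eqref{eq:linalg8} is precisely the invertibility of the operator
$$
\Pi_{-\frac d2}(\id_V+z^{-\Delta}U)^{-1}\Psi_{\frac d2}\,:\,\,V\big[\tfrac d2\big]\to V\big[-\tfrac d2\big]\,,
$$
now considered with coefficients in $\mc R_\infty U(\mf g)$. By Lemma \ref{0410:lem3} its coefficients actually lie in the Rees algebra $\mc RU(\mf g)$, and by Lemma \ref{0410:lem5} it has an inverse in $\mc R_\infty U(\mf g)\otimes\Hom\big(V[-\frac d2],V[\frac d2]\big)$. Therefore condition \eqref{eq:linalg8} holds over $\mc R_\infty U(\mf g)$, and by definition \eqref{eq:linalg7} the generalized quasideterminant exists and is given by
$$
|\id_V+z^{-\Delta}U|_{\Psi_{\frac d2},\Pi_{-\frac d2}}
=
\big(\Pi_{-\frac d2}(\id_V+z^{-\Delta}U)^{-1}\Psi_{\frac d2}\big)^{-1}
\,\in\mc R_\infty U(\mf g)\otimes\Hom\big(V\big[-\tfrac d2\big],V\big[\tfrac d2\big]\big)\,,
$$
which is exactly the asserted conclusion.

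Since all the substantive work has already been carried out in Lemmas \ref{0410:lem3}--\ref{0410:lem5} (the genuine content being Lemma \ref{0410:lem5}, which in turn rests on the computation of Lemma \ref{0410:lem4} and the representation-theoretic fact that $\Pi_{-\frac d2}F^d\Psi_{\frac d2}$ is a bijection by $\mf{sl}_2$ theory, together with the characterization of invertibility in Proposition \ref{prop:rees3}(f)), there is no remaining obstacle: the proof is a one-line bookkeeping argument combining these statements. The only point worth a word of care is that "existence of the generalized quasideterminant" is being asserted over the larger algebra $\mc R_\infty U(\mf g)$ rather than over $\mc RU(\mf g)$ itself — this is essential, because the inverse in Lemma \ref{0410:lem5} genuinely requires the localization, and one should phrase the appeal to \eqref{eq:linalg7}--\eqref{eq:linalg8} with $R=\mc R_\infty U(\mf g)$ accordingly.
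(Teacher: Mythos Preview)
Your proposal is correct and matches the paper's own proof, which is the one-line statement that the proposition ``is an immediate consequence of Lemmas \ref{0410:lem3} and \ref{0410:lem5}.'' Your elaboration on why the quasideterminant must be taken over $\mc R_\infty U(\mf g)$ rather than $\mc RU(\mf g)$ is a useful clarification but not a departure from the paper's argument.
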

\begin{proof}
It is an immediate consequence of Lemmas \ref{0410:lem3} and \ref{0410:lem5}.
\end{proof}

% STEP3
\subsection{Step 2: Preliminary computations}\label{step3}

Recall the definition \eqref{eq:X} of the matrix $z^X$
and its adjoint action \eqref{20170623:eq9}.
We have
\begin{equation}\label{0413:eq6}
\begin{array}{l}
\displaystyle{
\vphantom{\Big(}
z^{-X}Uz^X=z^{1-\Delta}U
\,\,,\,\,\,\,
z^{-X}\pi_{\leq\frac12}Uz^X=\pi_{\leq\frac12}z^{1-\Delta}U
\,\,,\,\,\,\,
z^{-X}Fz^X=zF
\,,} \\
\displaystyle{
\vphantom{\Big(}
z^{-X}\id_Vz^X=\id_V
\,\,,\,\,\,\,
z^{-X}Dz^X=D
\,,}
\end{array}
\end{equation}
from which we get the following identity:
\begin{equation}\label{0412:eq8}
z\id_V+F+\pi_{\leq\frac12}U+D
=
z^{1+X}(\id_V+F+\pi_{\leq\frac12}z^{-\Delta}U+z^{-1}D)z^{-X}
\,.
\end{equation}
Taking the  $(\Psi_{\frac d2},\Pi_{-\frac d2})$-quasideterminant of both sides of \eqref{0412:eq8}
we get, by \eqref{eq:linalg7},
\begin{equation}\label{0412:eq8b}
\begin{array}{l}
\displaystyle{
\vphantom{\Big(}
|z\id_V+F+\pi_{\leq\frac12}U+D|_{\Psi_{\frac d2},\Pi_{-\frac d2}}
} \\
\displaystyle{
\vphantom{\Big(}
=
\big(
\Pi_{-\frac d2}
z^{X}(\id_V+F+\pi_{\leq\frac12}z^{-\Delta}U+z^{-1}D)^{-1}z^{-1-X}
\Psi_{\frac d2}
\big)^{-1}
} \\
\displaystyle{
\vphantom{\Big(}
=
z^{1+d}
\big(
\Pi_{-\frac d2}
(\id_V+F+\pi_{\leq\frac12}z^{-\Delta}U+z^{-1}D)^{-1}
\Psi_{\frac d2}
\big)^{-1}
} \\
\displaystyle{
\vphantom{\Big(}
=
z^{1+d}
|\id_V+F+\pi_{\leq\frac12}z^{-\Delta}U+z^{-1}D|_{\Psi_{\frac d2},\Pi_{-\frac d2}}
\,,}
\end{array}
\end{equation}
since $\Pi_{-\frac d2}z^X=z^{-\frac d2}\Pi_{-\frac d2}$ and $z^{-X}\Psi_{\frac d2}=z^{-\frac d2}\Psi_{\frac d2}$.
In view of \eqref{0412:eq8b},
equation \eqref{0229:eq3} becomes
\begin{equation}\label{0229:eq3b}
|\id_V+z^{-\Delta}U|_{\Psi_{\frac d2},\Pi_{-\frac d2}}\bar 1
=
|\id_V+F+\pi_{\leq\frac12}z^{-\Delta}U+z^{-1}D|_{\Psi_{\frac d2},\Pi_{-\frac d2}}\bar 1
\,,
\end{equation}
which we need to prove,
in order to complete the proof of Lemma \ref{lem:main}.

Let us compute the quasideterminants
in the LHS and the RHS of equation \eqref{0229:eq3b} 
applying Proposition \ref{prop:linalg},
i.e. using formula \eqref{eq:linalg6} for the quasideterminants,
with the short exact sequences $\chi_1,\chi_2$ in \eqref{eq:chi3}.
For the quasideterminant in the LHS, we have
\begin{equation}\label{0330:eq5}
\begin{array}{l}
\displaystyle{
\vphantom{\Big(}
|\id_V+z^{-\Delta}U|_{\Psi_{\frac d2},\Pi_{-\frac d2}}
=
\Psi_{\frac d2}^{-1}\Big(
\id_V+z^{-\Delta}U
}\\
\displaystyle{
\vphantom{\Big(}
-
(\id_V+z^{-\Delta}U)\Psi_{>-\frac d2}
\big(\Pi_{<\frac d2}(\id_V+z^{-\Delta}U)\Psi_{>-\frac d2}\big)^{-1}
\Pi_{<\frac d2}(\id_V+z^{-\Delta}U)
\Big)\Pi_{-\frac d2}^{-1}
\,.}
\end{array}
\end{equation}
We already know that the expression in the RHS is well defined,
i.e. the operator in parenthesis induces a well defined map
from $V\big[-\frac d2\big]$ to $V\big[\frac d2\big]$.
Hence, we can replace $\Psi_{\frac d2}^{-1}$ and $\Pi_{-\frac d2}^{-1}$ in the RHS
with $\Pi_{\frac d2}$ and $\Psi_{-\frac d2}$ respectively (cf. \eqref{eq:chi}).
Note also that 
$\Pi_{\frac d2}\id_V\Psi_{-\frac d2}=0$
and
$\Pi_{\frac d2}z^{-\Delta}U\Psi_{-\frac d2}=z^{-1-d}\Pi_{\frac d2}U\Psi_{-\frac d2}$.
Hence, we can rewrite the RHS of \eqref{0330:eq5} as
\begin{equation}\label{0330:eq5b}
\begin{array}{l}
\displaystyle{
\vphantom{\big(}
z^{-1-d}\Pi_{\frac d2}U\Psi_{-\frac d2}
-
\Pi_{\frac d2}
(\id_V+z^{-\Delta}U)\Psi_{>-\frac d2}
} \\
\displaystyle{
\vphantom{\big(}
\times
\big(\Pi_{<\frac d2}(\id_V+z^{-\Delta}U)\Psi_{>-\frac d2}\big)^{-1}
\Pi_{<\frac d2}(\id_V+z^{-\Delta}U)\Psi_{-\frac d2}
\,.}
\end{array}
\end{equation}
Similarly, we use formula \eqref{eq:linalg6}
to compute the quasideterminant in the RHS of \eqref{0229:eq3b}.
We have
\begin{equation}\label{0330:eq6}
\begin{array}{l}
\displaystyle{
\vphantom{\Big(}
|\id_V+F+\pi_{\leq\frac12}z^{-\Delta}U+z^{-1}D|_{\Psi_{\frac d2},\Pi_{-\frac d2}}
}\\
\displaystyle{
\vphantom{\Big(}
=
\Pi_{\frac d2}
(\id_V\!+\!F\!+\!\pi_{\leq\frac12}z^{-\Delta}U\!+\!z^{-1}D)
\Psi_{-\frac d2}
-
\Pi_{\frac d2}
(\id_V\!+\!F\!+\!\pi_{\leq\frac12}z^{-\Delta}\!U+z^{-1}\!D)
\Psi_{>-\frac d2}
}\\
\displaystyle{
\vphantom{\Big(}
\times \!
\big(
\Pi_{\!<\frac d2}
(\id_V\!+\!F\!+\!\pi_{\leq\frac12}z^{-\Delta}\!U\!+\!z^{-1}\!D)
\Psi_{\!>\!-\!\frac d2}
\big)^{-1}
\Pi_{\!\!<\frac d2}
(\id_V\!+\!F\!+\!\pi_{\!\leq\frac12}z^{-\Delta}U\!+\!z^{-1}\!D)
\Psi_{\!-\!\frac d2}
}\\
\displaystyle{
\vphantom{\Big(}
=
z^{-1-d}\Pi_{\frac d2}U\Psi_{-\frac d2}
-
\Pi_{\frac d2}
(\id_V+z^{-\Delta}U)
\Psi_{>-\frac d2}
}\\
\displaystyle{
\vphantom{\Big(}
\times\big(
\Pi_{<\frac d2}
(\id_V\!+\!F\!+\!\pi_{\leq\frac12}z^{-\Delta}U\!+\!z^{-1}D)
\Psi_{>-\frac d2}
\big)^{-1}
\Pi_{<\frac d2}
(\id_V\!+\!z^{-\Delta}U\!+\!z^{-1}D)
\Psi_{-\frac d2}
\,,}
\end{array}
\end{equation}
where we used, for the second equality,
equations \eqref{20170623:eq8} and the obvious identities
$\Pi_{\frac d2}\id_V\Psi_{-\frac d2}=0$, $\Pi_{\frac d2}F=F\Psi_{-\frac d2}=0$,
and 
$$
\begin{array}{l}
\displaystyle{
\vphantom{\Big(}
\Pi_{\frac d2}\pi_{\leq\frac12}z^{-\Delta}U\Psi_{-\frac d2}=z^{-1-d}\Pi_{\frac d2}U\Psi_{-\frac d2}
\,,} \\
\displaystyle{
\vphantom{\Big(}
\Pi_{\frac d2}\pi_{\leq\frac12}z^{-\Delta}U=\Pi_{\frac d2}z^{-\Delta}U
\,\,,\,\,\,\,
\pi_{\leq\frac12}z^{-\Delta}U\Psi_{-\frac d2}=z^{-\Delta}U\Psi_{-\frac d2}
\,.}
\end{array}
$$
In view of \eqref{0330:eq5b} and \eqref{0330:eq6},
in order to prove equation \eqref{0229:eq3b} 
it suffices to prove
the following equation
\begin{equation}\label{0330:eq7}
\begin{array}{l}
\displaystyle{
\vphantom{\Big(}
\big(\Pi_{<\frac d2}(\id_V+z^{-\Delta}U)\Psi_{>-\frac d2}\big)^{-1}
\Pi_{<\frac d2}(\id_V+z^{-\Delta}U)\Psi_{-\frac d2}
\bar 1
} \\
\displaystyle{
\vphantom{\Big(}
=
\big(
\Pi_{<\frac d2}
(\id_V\!+\!F\!+\!\pi_{\leq\frac12}z^{-\Delta}U\!+\!z^{-1}D)
\Psi_{>-\frac d2}
\big)^{-1}
\Pi_{<\frac d2}
(\id_V\!+\!z^{-\Delta}U\!+\!z^{-1}D)
\Psi_{-\frac d2}
\bar 1
\,,}
\end{array}
\end{equation}
which we are left to prove.

To simplify notation, we introduce the operators
$A,B\in\mc RU(\mf g)\otimes\Hom\big(V\big[>-\frac d2\big],V\big[<\frac d2\big]\big)$
and 
$v,w\in\mc RU(\mf g)\otimes\Hom\big(V\big[-\frac d2\big],V\big[<\frac d2\big]\big)$,
defined as follows
\begin{equation}\label{0330:eq8}
\begin{array}{l}
\vphantom{\Big(}
\displaystyle{
A:=
\Pi_{<\frac d2}
(\id_V+F+\pi_{\leq\frac12}z^{-\Delta}U+z^{-1}D)
\Psi_{>-\frac d2}
%} \\
%\vphantom{\Big(}
%\displaystyle{
%\,\,\,\,\,\,\,\,
%=
%\sum_{i\in I_{\leq\frac12}}
%(z^{-\Delta}u_i)
%\,
%\Pi_{<\frac d2}
%U^i
%\Psi_{>-\frac d2}
%+\Pi_{<\frac d2} F\Psi_{>-\frac d2}
%+\sum_{-\frac d2<k<\frac d2}(1+z^{-1}d_k)\id_{V[k]}
\,,} \\
\vphantom{\Big(}
\displaystyle{
B:=
\Pi_{<\frac d2}(\id_V+z^{-\Delta}U)\Psi_{>-\frac d2}-A
} \\
\vphantom{\Big(}
\displaystyle{
\,\,\,\,\,\,\,\,\,
=
\sum_{\delta(i)\geq1}
(z^{-\Delta}u_i-(f|u_i))\,
\Pi_{<\frac d2} U^i\Psi_{>-\frac d2}
-
z^{-1}\Pi_{<\frac d2} D\Psi_{>-\frac d2}
\,,} \\
\vphantom{\Big(}
\displaystyle{
v:=
\Pi_{<\frac d2}
(\id_V+z^{-\Delta}U+z^{-1}D)
\Psi_{-\frac d2}
%} \\
%\vphantom{\Big(}
%\displaystyle{
%\,\,\,\,\,\,\,\,
%=
%\sum_{k<\frac d2} 
%\sum_{i\in I}
%z^{-\Delta}u_i\,\id_{V[k]} U^i \id_{V[-\frac d2]}
%+
%(1+z^{-1}d_{-\frac d2})\id_{V[-\frac d2]}
\,,} \\
\vphantom{\Big(}
\displaystyle{
w:=
\Pi_{<\frac d2}(\id_V+z^{-\Delta}U)\Psi_{-\frac d2}-v
=
-z^{-1} \Pi_{<\frac d2} D\Psi_{-\frac d2}
%=
%-d_{-\frac d2}\id_{V[-\frac d2]}
\,,} 
\end{array}
\end{equation}
where we use notation (cf. \eqref{20170623:eq6}):
$$
z^{-\Delta}u_i=z^{\delta(i)-1}u_i
\,\,\text{ for }\,\,
i\in I
\,.
$$
Using notation \eqref{0330:eq8},
equation \eqref{0330:eq7} can be rewritten as follows
\begin{equation}\label{0330:eq9}
(A+B)^{-1}(v+w)\bar1
=
A^{-1}v\bar1
\,\in
\mc RM\otimes\Hom\big(V\big[-\frac d2\big],V\big[>-\frac d2\big]\big)
\,.
\end{equation}

% STEP4
\subsection{Step 3: the key computation}\label{step4}

For every $i\in I_{\geq1}$, denote
\begin{equation}\label{key-X}
X_i
=
(z^{-\Delta}u_i-(f|u_i))
A^{-1}v\bar1
\,\in\mc RM\otimes
\Hom\big(V\big[-\frac d2\big],V\big[>-\frac d2\big]\big)
\,.
\end{equation}
We also let $X_i=0$ for $i\in I_{\leq\frac12}$.
\begin{lemma}\label{lem:key}
For every $i\in I_{\geq1}$ we have, in notation \eqref{20170623:eq7}:
\begin{equation}\label{key-eq}
\begin{array}{l}
\displaystyle{
\vphantom{\Big(}
X_i
+
z^{-1}
\sum_{1\leq\delta(j)\leq\delta(i)+\frac12}
A^{-1}
\Pi_{<\frac d2}
[U^j ,U_i]
\Psi_{>-\frac d2}
X_j
} \\
\displaystyle{
\vphantom{\Big(}
=
-z^{-1}
\Pi_{>-\frac d2} U_i
\big(
\Psi_{>-\frac d2}
A^{-1}
v
-
\Psi_{-\frac d2}
\big)
\bar1
} \\
\displaystyle{
\vphantom{\Big(}
\,\,\,\,\,\,+
z^{-2}
A^{-1}
\Pi_{<\frac d2} 
[D,U_i]
\big(
\Psi_{>-\frac d2}
A^{-1}v
-\Psi_{-\frac d2}
\big)
\bar1
\,.}
\end{array}
\end{equation}
\end{lemma}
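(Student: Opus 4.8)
The plan is to expand $X_i$ by commuting the left factor $z^{-\Delta}u_i-(f|u_i)$ past $A^{-1}v$, and then to reduce the whole identity to a single short statement about the operator $A$ of \eqref{0330:eq8}. Throughout I use that $\mc RJ\,\bar1=0$ in $\mc RM$ (by the definition \eqref{eq:reesM} of $\mc RM=\mc RU(\mf g)/\mc RJ$): since $u_l-(f|u_l)$ generates $J$ and is homogeneous in conformal weight whenever $\delta(u_l)\geq1$, one has $z^{-\Delta}u_l-(f|u_l)\in\mc RJ$, hence
$$
z^{-\Delta}u_l\,\bar1=(f|u_l)\bar1
\quad\text{and}\quad
z^{-\Delta}u_l\,A^{-1}v\,\bar1=X_l+(f|u_l)A^{-1}v\,\bar1
\qquad(\delta(l)\geq1)\,,
$$
the second by the definition \eqref{key-X} of $X_l$. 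In particular $z^{-\Delta}u_i-(f|u_i)$ annihilates $\bar1$; moving it to the right of $A^{-1}v$ and using that $(f|u_i)$ is a scalar,
$$
X_i=[z^{-\Delta}u_i,A^{-1}v]\,\bar1=A^{-1}[z^{-\Delta}u_i,v]\,\bar1-A^{-1}[z^{-\Delta}u_i,A]\,A^{-1}v\,\bar1\,.
$$
In \eqref{0330:eq8} the summands $\id_V$, $F$ and $z^{-1}D$ lie in the $\End V$ factor and commute with $z^{-\Delta}u_i$, so only the $z^{-\Delta}U$-terms contribute; using $[z^{-\Delta}u_i,z^{-\Delta}u_k]=z^{-1}z^{-\Delta}[u_i,u_k]$, expanding $[u_i,u_k]$ in the basis $\{u_l\}$, and invoking the invariance of the trace form in the form $\sum_{k\in I}([u_i,u_k]|u^l)\,U^k=[U^l,U_i]$, one finds
$$
[z^{-\Delta}u_i,v]=z^{-1}\sum_{\delta(l)\leq\delta(i)}(z^{-\Delta}u_l)\,\Pi_{<\frac d2}[U^l,U_i]\Psi_{-\frac d2}\,,\qquad
[z^{-\Delta}u_i,A]=z^{-1}\sum_{\delta(l)\leq\delta(i)+\frac12}(z^{-\Delta}u_l)\,\Pi_{<\frac d2}[U^l,U_i]\Psi_{>-\frac d2}\,,
$$
the ranges on $l$ being forced by the $\ad X$-grading of $\Pi_{<\frac d2},\Psi_{-\frac d2},\Psi_{>-\frac d2}$.

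Next I would substitute these into $X_i$ and split each sum into the parts $\delta(l)\geq1$ and $\delta(l)\leq\frac12$. In the range $\delta(l)\geq1$ the two identities above apply, and the terms carrying $X_l$ combine into $-z^{-1}\sum_{1\leq\delta(j)\leq\delta(i)+\frac12}A^{-1}\Pi_{<\frac d2}[U^j,U_i]\Psi_{>-\frac d2}X_j$, which moved to the left is precisely the left-hand side of \eqref{key-eq}. The mismatch between the $\Psi_{-\frac d2}$ coming from the $v$-commutator and the $\Psi_{>-\frac d2}$ coming from the $A$-commutator forces every remaining term to carry the common factor $W:=(\Psi_{>-\frac d2}A^{-1}v-\Psi_{-\frac d2})\bar1$. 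Collecting these, and using $\sum_{\delta(l)=1}(f|u_l)U^l=F$ (since $(f|u_l)\ne0$ only when $\delta(l)=1$) together with $\sum_{\delta(l)\leq\frac12}(z^{-\Delta}u_l)[U^l,U_i]=[\pi_{\leq\frac12}z^{-\Delta}U,U_i]$, the identity \eqref{key-eq} is reduced to
$$
A^{-1}\Pi_{<\frac d2}\big[\,\id_V+F+\pi_{\leq\frac12}z^{-\Delta}U+z^{-1}D\,,\,U_i\,\big]\,W=\Pi_{>-\frac d2}U_i\,W\,.
$$

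For the punchline, set $\bs A:=\id_V+F+\pi_{\leq\frac12}z^{-\Delta}U+z^{-1}D\in\mc RU(\mf g)\otimes\End V$, so that $A=\Pi_{<\frac d2}\bs A\Psi_{>-\frac d2}$ by \eqref{0330:eq8}. Since $\delta(i)\geq1$, the endomorphism $U_i$ maps $V$ into $V[>-\frac d2]$, hence $A\Pi_{>-\frac d2}U_i=\Pi_{<\frac d2}\bs A\,U_i$; applying $A$ to the displayed equation and using $[\id_V,U_i]=0$ turns it into $\Pi_{<\frac d2}[\bs A,U_i]\,W=\Pi_{<\frac d2}\bs A\,U_i\,W$, that is, into $\Pi_{<\frac d2}U_i\,\bs A\,W=0$. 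The latter holds because $\bs A\,W$ takes values in the top eigenspace $V[\frac d2]$, which $U_i$ annihilates as $\delta(i)>0$: indeed $A\,(A^{-1}v\,\bar1)=v\,\bar1$ and $v\,\bar1=\Pi_{<\frac d2}\bs A\Psi_{-\frac d2}\bar1$, because the terms by which the inner operator of $v$ differs from $\bs A$ — namely $\pi_{\geq1}z^{-\Delta}U$ and $-F$ — vanish on $\Psi_{-\frac d2}\bar1$ (using $F\Psi_{-\frac d2}=0$ and $z^{-\Delta}u_l\bar1=(f|u_l)\bar1$ for $\delta(l)\geq1$), whence $\Pi_{<\frac d2}\bs A\,(\Psi_{>-\frac d2}A^{-1}v-\Psi_{-\frac d2})\bar1=\Pi_{<\frac d2}\bs A\,W=0$.

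The hard part will be the bookkeeping in the second step: tracking the $\ad x$-degree ranges through each re-indexing, isolating which contributions collapse onto the $X_l$'s, and recognising that everything else is one and the same multiple of $W$. Once that is done, the last step is a two-line observation — $\bs A\,W$ lands in the top piece $V[\frac d2]$, and $U_i$ kills $V[\frac d2]$.
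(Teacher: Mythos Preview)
Your proposal is correct and takes a genuinely different, more structural route than the paper. Both arguments begin the same way: commute $z^{-\Delta}u_i-(f|u_i)$ past $A^{-1}v$, convert the resulting brackets via completeness into sums $\sum_l(z^{-\Delta}u_l)\,\Pi_{<\frac d2}[U^l,U_i]\Psi_{\bullet}$, and isolate the $X_j$-contributions (range $1\leq\delta(j)\leq\delta(i)+\tfrac12$). From there the paper \emph{splits} each bracket as $U^jU_i-U_iU^j$, inserts the projection identities \eqref{20170626:eq2}, and then painstakingly reconstitutes $A$ and $v$ from the resulting sums, producing four blocks \eqref{eq:prX4}--\eqref{eq:prX7} that recombine into \eqref{key-eq}. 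You instead keep the brackets whole and observe that every surviving term is the same operator $[\bs A,U_i]$ (up to the $z^{-1}D$ piece) applied to $W=(\Psi_{>-\frac d2}A^{-1}v-\Psi_{-\frac d2})\bar1$; the identity then collapses to $\Pi_{<\frac d2}U_i\,\bs A\,W=0$, which follows from the single clean fact $\Pi_{<\frac d2}\bs A\,W=0$ (since $\Pi_{<\frac d2}\bs A\Psi_{>-\frac d2}=A$ and $\Pi_{<\frac d2}\bs A\Psi_{-\frac d2}=v$, the latter holding already as an operator identity because $U^l\Psi_{-\frac d2}=0$ for $\delta(l)>0$).

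What each approach buys: the paper's calculation is entirely mechanical and makes the appearance of both RHS terms of \eqref{key-eq} visible line by line; yours trades that transparency for a conceptual punchline --- $\bs A\,W$ lands in the top eigenspace $V[\frac d2]$, hence is annihilated by any $U_i$ of positive degree --- and thereby avoids the four-term split and its recombination. Your outline is sound; in writing it up fully, the only point requiring care is that in the ``$\delta(l)\leq\tfrac12$'' branch the scalar $z^{-\Delta}u_l$ does \emph{not} commute past $A^{-1}v$, but this is harmless because you never need it to: $\sum_{\delta(l)\leq\frac12}(z^{-\Delta}u_l)[U^l,U_i]$ is already $[\pi_{\leq\frac12}z^{-\Delta}U,U_i]$ as an element of $\mc RU(\mf g)\otimes\End V$, acting on $W\in\mc RM\otimes V$.
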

\begin{proof}
Recall that $(z^{-\Delta}u_i-(f|u_i))\bar 1=0$ in $\mc RM$ for every $i\in I_{\geq1}$.
Hence, 
\begin{equation}\label{eq:prX1}
\begin{array}{l}
\displaystyle{
\vphantom{\Big(}
X_i
=
-A^{-1}
[z^{-\Delta}u_i,A]
A^{-1}v\bar1
+A^{-1}
[z^{-\Delta}u_i,v]
\bar1
} \\
\displaystyle{
\vphantom{\Big(}
\,\,\,\,\,\,\,
=
-\sum_{j\in I_{\leq\frac12}}
A^{-1}
\big[
z^{-\Delta}u_i,
z^{-\Delta}u_j
\big]
\Pi_{<\frac d2}
U^j
\Psi_{>-\frac d2}
A^{-1}v\bar1
} \\
\displaystyle{
\vphantom{\Big(}
\,\,\,\,\,\,\,\,\,\,\,\,\,\,\,
+
\sum_{j\in I}
A^{-1}
\big[
z^{-\Delta}u_i,
z^{-\Delta}u_j
\big]
\Pi_{<\frac d2}
U^j
\Psi_{-\frac d2}
\bar1
\,.}
\end{array}
\end{equation}
By the definition of conformal weight,
we have 
$$
\big[
z^{-\Delta}u_i,
z^{-\Delta}u_j
\big]
=
z^{-1-\Delta}[u_i,u_j]
\,.
$$
Moreover, by the completeness relations,
we have the identities (using notation \eqref{20170623:eq7})
$$
\sum_{j\in I}[u_i,u_j]\, U^j
=
\sum_{j\in I}u_j [U^j,U_i]
\,\,\text{ and }\,\,
\sum_{\delta(j)\leq\frac12}[u_i,u_j]\, U^j
=
\sum_{\delta(j)\leq\delta(i)+\frac12}u_j [U^j,U_i]
\,.
$$
Hence, \eqref{eq:prX1} gives
\begin{equation}\label{eq:prX2}
\begin{array}{l}
\displaystyle{
\vphantom{\Big(}
X_i
=
-z^{-1}
\sum_{\delta(j)\leq\delta(i)+\frac12}
A^{-1}
(z^{-\Delta}u_j)
\Pi_{<\frac d2}
\big[U^j,U_i\big]
\Psi_{>-\frac d2}
A^{-1}v\bar1
} \\
\displaystyle{
\vphantom{\Big(}
\,\,\,\,\,\,\,\,\,\,\,\,\,\,\,
+
z^{-1}
\sum_{j\in I}
A^{-1}
(z^{-\Delta}u_j)
\Pi_{<\frac d2}
\big[U^j,U_i\big]
\Psi_{-\frac d2}
\bar1
\,.}
\end{array}
\end{equation}
Since, by assumption, $i\in I_{\geq1}$,
we have $\im U_i\subset V\big[>-\frac d2\big]$ and $V\big[\frac d2\big]\subset\ker U_i$.
As a consequence, we have the following identities
(cf. \eqref{eq:chi})
\begin{equation}\label{20170626:eq2}
U_i
=
\Psi_{>-\frac d2}\Pi_{>-\frac d2} U_i
\,\,\text{ and }\,\,
U_i
=
U_i\Psi_{<\frac d2}\Pi_{<\frac d2}
\,.
\end{equation}
We can therefore rewrite \eqref{eq:prX2} as follows
\begin{equation}\label{eq:prX3}
\begin{array}{l}
\displaystyle{
\vphantom{\Big(}
X_i
=
-z^{-1}
\sum_{\delta(j)\leq\delta(i)+\frac12}
A^{-1}
(z^{-\Delta}u_j)
\Pi_{<\frac d2}
U^j
\Psi_{>-\frac d2}
\Pi_{>-\frac d2} U_i\Psi_{>-\frac d2}
A^{-1}v\bar1
} \\
\displaystyle{
\vphantom{\Big(}
+z^{-1}
\sum_{\delta(j)\leq\delta(i)+\frac12}
A^{-1}
(z^{-\Delta}u_j)
\Pi_{<\frac d2} U_i\Psi_{<\frac d2}
\Pi_{<\frac d2}
U^j
\Psi_{>-\frac d2}
A^{-1}v\bar1
} \\
\displaystyle{
\vphantom{\Big(}
+
z^{-1}
\sum_{j\in I}
A^{-1}
(z^{-\Delta}u_j)
\Pi_{<\frac d2}
U^j
\Psi_{>-\frac d2}
\Pi_{>-\frac d2} U_i\Psi_{-\frac d2}
\bar1
} \\
\displaystyle{
\vphantom{\Big(}
-
z^{-1}
\sum_{j\in I}
A^{-1}
(z^{-\Delta}u_j)
\Pi_{<\frac d2} U_i\Psi_{<\frac d2}
\Pi_{<\frac d2}
U^j
\Psi_{-\frac d2}
\bar1
\,.}
\end{array}
\end{equation}
Recalling the definitions \eqref{0330:eq8} of $A$ and $v$, we have the following identities:
$$
\begin{array}{l}
\displaystyle{
\vphantom{\Big(}
\sum_{\delta(j)\leq\delta(i)+\frac12}
(z^{-\Delta}u_j)
\Pi_{<\frac d2}
U^j
\Psi_{>-\frac d2}
} \\
\displaystyle{
\vphantom{\Big(}
=
A
+
\sum_{1\leq \delta(j)\leq \delta(i)+\frac12}
(z^{-\Delta}u_j-(f|u_j))
\Pi_{<\frac d2}
U^j
\Psi_{>-\frac d2}
-
\Pi_{<\frac d2}
(\id_V+z^{-1}D)
\Psi_{>-\frac d2}
\,,} \\
\displaystyle{
\vphantom{\Big(}
\sum_{j\in I}
(z^{-\Delta}u_j)
\Pi_{<\frac d2}
U^j
\Psi_{>-\frac d2}
} \\
\displaystyle{
\vphantom{\Big(}
=
A
+
\sum_{\delta(j)\geq1}
(z^{-\Delta}u_j-(f|u_j))
\Pi_{<\frac d2}
U^j
\Psi_{>-\frac d2}
-
\Pi_{<\frac d2}
(\id_V+z^{-1}D)
\Psi_{>-\frac d2}
\,,} \\
\displaystyle{
\vphantom{\Big(}
\sum_{j\in I}
(z^{-\Delta}u_j)
\Pi_{<\frac d2}
U^j
\Psi_{-\frac d2}
=
v
-
\Pi_{<\frac d2}(\id_V+z^{-1}D)\Psi_{-\frac d2}
\,.}
\end{array}
$$
Hence, the first term in the RHS of \eqref{eq:prX3} 
can be rewritten as
\begin{equation}\label{eq:prX4}
\begin{array}{l}
\displaystyle{
\vphantom{\Big(}
-z^{-1}
\sum_{1\leq \delta(j)\leq \delta(i)+\frac12}
A^{-1}
\Pi_{<\frac d2}
U^j U_i\Psi_{>-\frac d2}
X_j
-z^{-1}
\Pi_{>-\frac d2} U_i\Psi_{>-\frac d2}
A^{-1}v\bar1
} \\
\displaystyle{
\vphantom{\Big(}
+z^{-1}
A^{-1}
\Pi_{<\frac d2}
(\id_V+z^{-1}D)
U_i\Psi_{>-\frac d2}
A^{-1}v\bar1
\,,}
\end{array}
\end{equation}
the second term in the RHS of \eqref{eq:prX3} becomes
\begin{equation}\label{eq:prX5}
\begin{array}{l}
\displaystyle{
\vphantom{\Big(}
+z^{-1}
\sum_{1\leq \delta(j)\leq \delta(i)+\frac12}
A^{-1}
\Pi_{<\frac d2} U_i
U^j
\Psi_{>-\frac d2}
X_j
+z^{-1}
A^{-1}
\Pi_{<\frac d2} U_i\Psi_{<\frac d2}
v\bar1
} \\
\displaystyle{
\vphantom{\Big(}
-z^{-1}
A^{-1}
\Pi_{<\frac d2} U_i
(\id_V+z^{-1}D)
\Psi_{>-\frac d2}
A^{-1}v\bar1
\,,}
\end{array}
\end{equation}
the third term in the RHS of \eqref{eq:prX3} becomes
\begin{equation}\label{eq:prX6}
\begin{array}{l}
\displaystyle{
\vphantom{\Big(}
z^{-1}
\Pi_{>-\frac d2} U_i\Psi_{-\frac d2}
\bar1
-
z^{-1}
A^{-1}
\Pi_{<\frac d2}
(\id_V+z^{-1}D)
U_i\Psi_{-\frac d2}
\bar1
\,,}
\end{array}
\end{equation}
and the last term in the RHS of \eqref{eq:prX3} becomes
\begin{equation}\label{eq:prX7}
\begin{array}{l}
\displaystyle{
\vphantom{\Big(}
-
z^{-1}
A^{-1}
\Pi_{<\frac d2} U_i\Psi_{<\frac d2}
v
\bar1
+z^{-1}
A^{-1}
\Pi_{<\frac d2} U_i
(\id_V+z^{-1}D)\Psi_{-\frac d2}
\bar1
\,.}
\end{array}
\end{equation}
Combining \eqref{eq:prX4}-\eqref{eq:prX7}, we get \eqref{key-eq}.
\end{proof}
\begin{lemma}\label{lem:key2}
The unique solution of equation \eqref{key-eq} is (for $i\in I_{\geq1}$):
\begin{equation}\label{key-sol}
X_i
=
-z^{-1}
\Pi_{>-\frac d2} U_i
\big(
\Psi_{>-\frac d2}
A^{-1}
v
-
\Psi_{-\frac d2}
\big)
\bar1
\,.
\end{equation}
\end{lemma}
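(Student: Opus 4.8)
The plan is to prove the lemma in two moves: first, that equation \eqref{key-eq}, viewed as a linear system for the unknowns $X_i$ with $i\in I_{\geq1}$ (all indices $j$ occurring in the sum satisfy $\delta(j)\geq1$, so the $X_i$ with $i\in I_{\leq\frac12}$ play no role), has \emph{at most one} solution; and second, that the right-hand side of \eqref{key-sol} is a solution. By uniqueness this forces it to be \emph{the} solution, and combined with Lemma~\ref{lem:key}, which exhibits $(z^{-\Delta}u_i-(f|u_i))A^{-1}v\bar1$ as a solution, it also yields the identity $(z^{-\Delta}u_i-(f|u_i))A^{-1}v\bar1=-z^{-1}\Pi_{>-\frac d2}U_i(\Psi_{>-\frac d2}A^{-1}v-\Psi_{-\frac d2})\bar1$ that will be used later.

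For uniqueness, I would rewrite \eqref{key-eq} as $(\id+z^{-1}\mathcal C)(X_\bullet)=b_\bullet$ on the finite direct sum $\bigoplus_{i\in I_{\geq1}}\mc RM\otimes\Hom(V[-\frac d2],V[>-\frac d2])$, where $\mathcal C$ collects the maps $(X_j)_j\mapsto\sum_{1\leq\delta(j)\leq\delta(i)+\frac12}A^{-1}\Pi_{<\frac d2}[U^j,U_i]\Psi_{>-\frac d2}X_j$ and $b_\bullet$ is the fixed right-hand side of \eqref{key-eq}. By Proposition~\ref{prop:rees3}(d), $A^{-1}$ is approximable modulo $z^{-N-1}\mc RM$ by elements of $\mc RU(\mf g)$, hence preserves the decreasing filtration $\{z^{-N}\mc RM\}_N$; since the constant $\End V$-maps do not affect powers of $z$, $\mathcal C$ preserves this filtration and $z^{-1}\mathcal C$ sends $z^{-N}\mc RM$ into $z^{-N-1}\mc RM$. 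As $\mc RM$ (and the finite sum of copies) is separated and complete for this filtration, $\id+z^{-1}\mathcal C$ is invertible with inverse $\sum_{k\geq0}(-z^{-1}\mathcal C)^k$, giving uniqueness. A naive induction on $\delta$ would not suffice, since the $X_j$ occur with $\delta(j)$ possibly larger than $\delta(i)$; it is the factor $z^{-1}$ that makes the system triangular, via the Rees filtration.

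For the verification, substitute \eqref{key-sol} into \eqref{key-eq}. Setting $Y:=(\Psi_{>-\frac d2}A^{-1}v-\Psi_{-\frac d2})\bar1$ one has $X_j=-z^{-1}\Pi_{>-\frac d2}U_jY$, and since $\delta(j)\geq1$, equation \eqref{20170626:eq2} gives $\Psi_{>-\frac d2}\Pi_{>-\frac d2}U_j=U_j$, so $\Psi_{>-\frac d2}X_j=-z^{-1}U_jY$. With this, the left-hand side of \eqref{key-eq} becomes $-z^{-1}\Pi_{>-\frac d2}U_iY-z^{-2}\sum_{1\leq\delta(j)\leq\delta(i)+\frac12}A^{-1}\Pi_{<\frac d2}[U^j,U_i]U_jY$, while its right-hand side is $-z^{-1}\Pi_{>-\frac d2}U_iY+z^{-2}A^{-1}\Pi_{<\frac d2}[D,U_i]Y$; all dependence on $\bar1$, $A^{-1}$, and $\Pi_{<\frac d2}$ then drops out, and \eqref{key-eq} for the candidate \eqref{key-sol} reduces to the single identity in $\End V$
\[
[D,U_i]+\sum_{1\leq\delta(j)\leq\delta(i)+\frac12}[U^j,U_i]U_j=0,\qquad i\in I_{\geq1}.
\]

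To establish this $\End V$-identity I would use $D=-\sum_{\delta(k)\geq1}U^kU_k$ from \eqref{eq:D} and expand $[D,U_i]=-\sum_{\delta(k)\geq1}U^k[U_k,U_i]-\sum_{\delta(k)\geq1}[U^k,U_i]U_k$; the last sum cancels part of $\sum_{1\leq\delta(j)\leq\delta(i)+\frac12}[U^j,U_i]U_j$, reducing the identity to $\sum_{\delta(k)\geq1}U^k[U_k,U_i]=-\sum_{\delta(k)\geq\delta(i)+1}[U^k,U_i]U_k$. This in turn follows from the $\ad u_i$-invariance of the tensor $\sum_{k\in I}u_k\otimes u^k\in\mf g\otimes\mf g$: applying $\id\otimes\pi_{\leq-1}$ to $\sum_k[u_i,u_k]\otimes u^k=-\sum_k u_k\otimes[u_i,u^k]$ isolates, on the left, the indices with $\delta(k)\geq1$ and, on the right (because $[u_i,u^k]\in\mf g_{\delta(i)-\delta(k)}$), the indices with $\delta(k)\geq\delta(i)+1$; then $\varphi\otimes\varphi$, the multiplication $a\otimes b\mapsto ba$, and $\varphi([u_i,u_k])=-[U_k,U_i]$, $\varphi([u_i,u^k])=-[U^k,U_i]$ produce exactly the needed equality. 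The main obstacle I anticipate is precisely this bookkeeping of summation ranges — the interplay among the cutoff $\delta(k)\geq1$ defining $D$, the cutoff $\delta(j)\leq\delta(i)+\frac12$ in \eqref{key-eq}, and the grading constraints that decide which projected commutators vanish — together with checking that the Rees-module apparatus of Section~\ref{sec:ore} legitimately supplies both the completeness used in the uniqueness step and the control on $A^{-1}$ used throughout.
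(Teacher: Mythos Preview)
Your proposal is correct and follows essentially the same approach as the paper: both reduce the verification that \eqref{key-sol} solves \eqref{key-eq} to the $\End V$ identity $\sum_{1\leq\delta(j)\leq\delta(i)+\frac12}[U^j,U_i]U_j=-[D,U_i]$ (the paper's \eqref{20170626:eq3}), prove it by the same Leibniz-plus-duality computation (your $\ad u_i$-invariance of $\sum_k u_k\otimes u^k$ is exactly the paper's \eqref{20170626:eq5}), and argue uniqueness by inverting $\id+z^{-1}M$ via geometric series. Your treatment of uniqueness is in fact a bit more careful than the paper's, since you invoke Proposition~\ref{prop:rees3}(d) to justify that $A^{-1}$ (which lives in $\mc R_\infty U(\mf g)$, not $\mc RU(\mf g)$) preserves the Rees filtration on $\mc RM$.
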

\begin{proof}
First, we prove that \eqref{key-sol} solves equation \eqref{key-eq}.
Note that the first term in the LHS of \eqref{key-eq} equals, by \eqref{key-sol},
the first term in the RHS of \eqref{key-eq}.
We hence need to prove that
the second terms in the LHS and RHS of \eqref{key-eq} coincide:
\begin{equation}\label{20170626:eq1}
\begin{array}{l}
\displaystyle{
\vphantom{\Big(}
-z^{-2}
\sum_{1\leq\delta(j)\leq\delta(i)+\frac12}
A^{-1}
\Pi_{<\frac d2}
[U^j ,U_i]
\Psi_{>-\frac d2}
\Pi_{>-\frac d2} U_j
\big(
\Psi_{>-\frac d2}
A^{-1}
v
-
\Psi_{-\frac d2}
\big)
\bar1
} \\
\displaystyle{
\vphantom{\Big(}
=
z^{-2}
A^{-1}
\Pi_{<\frac d2} 
[D,U_i]
\big(
\Psi_{>-\frac d2}
A^{-1}v
-\Psi_{-\frac d2}
\big)
\bar1
\,.}
\end{array}
\end{equation}
Recalling the first equation of \eqref{20170626:eq2},
equation \eqref{20170626:eq1} is established once we prove the following identity:
\begin{equation}\label{20170626:eq3}
\sum_{1\leq\delta(j)\leq\delta(i)+\frac12}
[U^j ,U_i] U_j
=
-[D,U_i]
\,.
\end{equation}
By the definition \eqref{eq:D} of the shift matrix $D$ and the Leibniz rule, we have
\begin{equation}\label{20170626:eq4}
-[D,U_i]
%=
%\sum_{\delta(j)\geq1}[U^jU_j,U_i]
=
\sum_{\delta(j)\geq1}
\big(
[U^j,U_i]U_j
+U^j[U_j,U_i]
\big)
\,.
\end{equation}
On the other hand, by the duality of the bases $\{U_j\},\,\{U^j\}$ and the invariance of the trace form,
we have
\begin{equation}\label{20170626:eq5}
\begin{array}{l}
\displaystyle{
\vphantom{\Big(}
\sum_{\delta(j)\geq1}
U^j[U_j,U_i]
=
\sum_{\delta(j)\geq1}\sum_{k\in I}
([U_j,U_i]|U^k)
U^jU_k
=
} \\
\displaystyle{
\vphantom{\Big(}
-\sum_{\delta(k)\geq\delta(i)+1}
\sum_{j\in I}
(U_j|[U^k,U_i])
U^jU_k
=
-\sum_{\delta(k)\geq\delta(i)+1}
[U^k,U_i]U_k
\,.}
\end{array}
\end{equation}
Combining \eqref{20170626:eq4} and \eqref{20170626:eq5},
we get equation \eqref{20170626:eq3}.

The uniqueness of the solution of equation \eqref{key-eq} is clear. 
Indeed, equation \eqref{key-eq} has the matrix form
$(\id+z^{-1}M)X=Y$,
where $X$ is the column vector $(X_i)_{\delta(i)\geq1}$, 
with entries in the vector space $V=\mc RM\otimes\Hom\big(V\big[-\frac d2\big],V\big[>-\frac d2\big]\big)$,
$Y$ is the analogous column vector defined by the RHS of \eqref{key-eq},
and $M$ is some matrix with entries 
in $\mc RU(\mf g)\otimes\Hom\big(V\big[>-\frac d2\big],V\big[>-\frac d2\big]\big)$,
which is an algebra acting on the vector space $V$.
But then the matrix $\id+z^{-1}M$ can be inverted by geometric series expansion.
\end{proof}
\begin{corollary}\label{lem:key3}
We have (recall notation \eqref{0330:eq8})
\begin{equation}\label{key-cor}
B A^{-1}v\bar1=w\bar1
\,.
\end{equation}
\end{corollary}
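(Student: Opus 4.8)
The plan is to obtain \eqref{key-cor} as a purely formal consequence of Lemma \ref{lem:key2} and the definition \eqref{eq:D} of the shift matrix; no new identity is required, and the only care needed is in tracking the $\Hom$-spaces and in using that factors taking values in $\End V$ commute, as operators, with factors taking values in $\mc RU(\mf g)$, since they occupy different tensor factors of $\mc RU(\mf g)\otimes\End V$.

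First I would expand $BA^{-1}v\bar1$ by the definition \eqref{0330:eq8} of $B$:
\[
BA^{-1}v\bar1
=
\sum_{\delta(i)\geq1}(z^{-\Delta}u_i-(f|u_i))\,\Pi_{<\frac d2}U^i\Psi_{>-\frac d2}\,A^{-1}v\bar1
\;-\;z^{-1}\Pi_{<\frac d2}D\Psi_{>-\frac d2}\,A^{-1}v\bar1
\,.
\]
In the $i$-th summand of the first sum the scalar operator $\Pi_{<\frac d2}U^i\Psi_{>-\frac d2}$ sits in the second tensor factor, hence commutes with $z^{-\Delta}u_i-(f|u_i)\in\mc RU(\mf g)$; moving it to the left and recalling the definition \eqref{key-X} of $X_i$, the first sum becomes $\sum_{\delta(i)\geq1}\Pi_{<\frac d2}U^i\Psi_{>-\frac d2}X_i$.

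Next I would substitute the closed form \eqref{key-sol} of $X_i$ furnished by Lemma \ref{lem:key2}. Using the first identity in \eqref{20170626:eq2}, namely $\Psi_{>-\frac d2}\Pi_{>-\frac d2}U_i=U_i$ for $i\in I_{\geq1}$ (valid because $U_i\in(\End V)[\geq1]$), the product $\Pi_{<\frac d2}U^i\Psi_{>-\frac d2}\,\Pi_{>-\frac d2}U_i$ collapses to $\Pi_{<\frac d2}U^iU_i$, so that
\[
\sum_{\delta(i)\geq1}\Pi_{<\frac d2}U^i\Psi_{>-\frac d2}X_i
=
-z^{-1}\,\Pi_{<\frac d2}\Big(\sum_{\delta(i)\geq1}U^iU_i\Big)\big(\Psi_{>-\frac d2}A^{-1}v-\Psi_{-\frac d2}\big)\bar1
\,.
\]
By \eqref{eq:D} we have $\sum_{\delta(i)\geq1}U^iU_i=-D$, so the right-hand side equals $z^{-1}\Pi_{<\frac d2}D\big(\Psi_{>-\frac d2}A^{-1}v-\Psi_{-\frac d2}\big)\bar1$.

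Finally I would add back the second term of $BA^{-1}v\bar1$: the two copies of $\pm z^{-1}\Pi_{<\frac d2}D\Psi_{>-\frac d2}A^{-1}v\bar1$ cancel, leaving
\[
BA^{-1}v\bar1=-z^{-1}\Pi_{<\frac d2}D\Psi_{-\frac d2}\bar1=w\bar1
\]
by the definition \eqref{0330:eq8} of $w$, which is exactly \eqref{key-cor}. All the work sits in Lemma \ref{lem:key2}, i.e.\ in the key computation of Section \ref{step4}; the present statement is a bookkeeping consequence of it, and the only genuine point is the cancellation enabled by the identity $\sum_{\delta(i)\geq1}U^iU_i=-D$ together with the support identity $\Psi_{>-\frac d2}\Pi_{>-\frac d2}U_i=U_i$. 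So I do not expect a real obstacle here beyond correctly invoking those two facts.
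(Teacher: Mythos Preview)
Your proposal is correct and follows essentially the same route as the paper's proof: expand $BA^{-1}v\bar1$ via \eqref{0330:eq8}, recognize the summands as $\Pi_{<\frac d2}U^i\Psi_{>-\frac d2}X_i$, substitute the closed form \eqref{key-sol}, collapse $\Psi_{>-\frac d2}\Pi_{>-\frac d2}U_i=U_i$ using \eqref{20170626:eq2}, and use $\sum_{\delta(i)\geq1}U^iU_i=-D$ to effect the cancellation. The only difference is that you make explicit the commutation of the $\End V$-factor $\Pi_{<\frac d2}U^i\Psi_{>-\frac d2}$ past the $\mc RU(\mf g)$-factor $z^{-\Delta}u_i-(f|u_i)$, which the paper leaves implicit.
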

\begin{proof}
By the definitions \eqref{0330:eq8} of $B$, the definition \eqref{key-X} of $X_i$
and its formula \eqref{key-sol}, we have
\begin{equation}\label{0331:eq2}
\begin{array}{l}
\displaystyle{
\vphantom{\Big(}
B A^{-1}v\bar1
=
\sum_{\delta(i)\geq1}
\Pi_{<\frac d2} U^i\Psi_{>-\frac d2}
X_i
-
z^{-1}\Pi_{<\frac d2} D\Psi_{>-\frac d2}
A^{-1}v\bar1
} \\
\displaystyle{
\vphantom{\Big(}
=
\!-\!z^{\!-\!1}
\!\!\!
\sum_{\delta(i)\geq1}
\!\!\!
\Pi_{<\frac d2} U^i\Psi_{>-\frac d2}
\Pi_{>-\frac d2} U_i
\big(
\Psi_{>-\frac d2}
A^{-1}
v
\!-\!
\Psi_{-\frac d2}
\big)
\bar1
\!-\!
z^{-1}\Pi_{<\frac d2} D\Psi_{>-\frac d2}
A^{-1}v\bar1
} \\
\displaystyle{
\vphantom{\Big(}
=
z^{-1}
\Pi_{<\frac d2} 
D
\big(
\Psi_{>-\frac d2}
A^{-1}
v
-
\Psi_{-\frac d2}
\big)
\bar1
-
z^{-1}\Pi_{<\frac d2} D\Psi_{>-\frac d2}
A^{-1}v\bar1
} \\
\displaystyle{
\vphantom{\Big(}
=
-z^{-1}
\Pi_{<\frac d2} 
D
\Psi_{-\frac d2}
\bar1
=
w\bar 1
\,,}
\end{array}
\end{equation}
where, for the third equality, we used \eqref{20170626:eq2} 
and the definition \eqref{eq:D} of the shift matrix $D$.
\end{proof}

% STEP5
\subsection{Step 4: proof of Equation \eqref{0330:eq9}}\label{step5}

The operators $A,B$ in \eqref{0330:eq8} lie in 
$\mc RU(\mf g)\otimes\Hom\big(V\big[>-\frac d2\big],V\big[<\frac d2\big]\big)$,
and, by the definition of $B$ and the definition \eqref{0406:eq1}
of the homomorphism $\epsilon:\,\mc RU(\mf g)\to\mb F$,
we have $\epsilon(B)=0$ (where $\epsilon$ here is acting on the first factor
of the tensor product $\mc RU(\mf g)\otimes\Hom\big(V\big[>-\frac d2\big],V\big[<\frac d2\big]\big)$).
It then follows by Proposition \ref{prop:rees3}(f)
that 
$$
\id_{V[<\frac d2]}+BA^{-1}
$$
is an invertible element of 
$\mc R_{\infty}U(\mf g)\otimes\End\big(V\big[<\frac d2\big]\big)$.
Moreover, by Corollary \ref{lem:key3}, we have
$$
(\id+B A^{-1})v\bar1
=(v+w)\bar1
\,.
$$
We then have:
$$
\begin{array}{l}
\displaystyle{
\vphantom{\Big(}
A^{-1}v\bar1
=
A^{-1}(\id+B A^{-1})^{-1}
(\id+B A^{-1})v\bar1
=
(A+B)^{-1}(v+w)\bar1
\,.}
\end{array}
$$
\begin{flushright}
\qedsymbol
\end{flushright}

\subsection{Proof of Theorem \ref{thm:main1}}
\label{sec:4.6}

The proof is similar to the proof of the analogous result 
for classical affine $W$-algebras, presented in \cite[Sec.4]{DSKV18}.
By the Main Lemma \ref{lem:main},
the operator
$|\id_V+z^{-\Delta}U|_{\Psi_{\frac d2},\Pi_{-\frac d2}}$ 
is an invertible element of 
$\mc R_\infty U(\mf g)\otimes\Hom\big(V\big[-\frac d2\big],V\big[\frac d2\big]\big)$,
and equation \eqref{0229:eq3} holds.
Hence, in view of Proposition \ref{0413:prop1},
Theorem \ref{thm:main1} holds provided that
\begin{equation}\label{20170626:eq6}
\big[a,
|\id_V+z^{-\Delta}U|_{\Psi_{\frac d2},\Pi_{-\frac d2}}
\big]\bar 1
=0
\,\,\text{ for all }\,\,
a\in\mf g_{\geq\frac12}
\,.
\end{equation}
By the invertibility of $|\id_V+z^{-\Delta}U|_{\Psi_{\frac d2},\Pi_{-\frac d2}}$
in order to prove equation \eqref{20170626:eq6} it suffices to prove that
\begin{equation}\label{20170626:eq7}
\big[a,
\big(|\id_V+z^{-\Delta}U|_{\Psi_{\frac d2},\Pi_{-\frac d2}}\big)^{-1}
\big]
=0\,.
\end{equation}
By the definition of \eqref{eq:linalg7} of generalized quasideterminant,
we have
\begin{equation}\label{20170626:eq8}
\begin{array}{l}
\displaystyle{
\vphantom{\Big(}
\big[a,
\big(|\id_V+z^{-\Delta}U|_{\Psi_{\frac d2},\Pi_{-\frac d2}}\big)^{-1}
\big]
=
\Pi_{-\frac d2}
\big[a,
(\id_V+z^{-\Delta}U)^{-1}
\big]
\Psi_{\frac d2}
} \\
\displaystyle{
\vphantom{\Big(}
=
-
\Pi_{-\frac d2}
(\id_V+z^{-\Delta}U)^{-1}
\big[a,
z^{-\Delta}U
\big]
(\id_V+z^{-\Delta}U)^{-1}
\Psi_{\frac d2}
\,.}
\end{array}
\end{equation}
Recalling the definition \eqref{20170623:eq10} of the operator $z^{-\Delta}U$, we have
\begin{equation}\label{20170626:eq9}
\begin{array}{l}
\displaystyle{
\vphantom{\Big(}
\big[a,
z^{-\Delta}U
\big]
=
\sum_{i\in I}z^{\delta(i)-1}[a,u_i]U^i
=
\sum_{i,k\in I}z^{\delta(i)-1}([a,u_i]|u^k)u_kU^i
} \\
\displaystyle{
\vphantom{\Big(}
=
\sum_{i,k\in I}z^{\delta(k)-\delta(a)-1}(u_i|[u^k,a])u_kU^i
=
z^{-\delta(a)}\sum_{k\in I}z^{\delta(k)-1}u_k[U^k,\varphi(a)]
} \\
\displaystyle{
\vphantom{\Big(}
=
z^{-\delta(a)}[z^{-\Delta} U,\varphi(a)]
=
z^{-\delta(a)}[\id_V+z^{-\Delta} U,\varphi(a)]
\,.}
\end{array}
\end{equation}
Using \eqref{20170626:eq9},
we can rewrite the RHS of \eqref{20170626:eq8} as
\begin{equation}\label{20170626:eq10}
\begin{array}{l}
\displaystyle{
\vphantom{\Big(}
-
z^{-\delta(a)}
\Pi_{-\frac d2}
(\id_V+z^{-\Delta}U)^{-1}
[\id_V+z^{-\Delta} U,\varphi(a)]
(\id_V+z^{-\Delta}U)^{-1}
\Psi_{\frac d2}
} \\
\displaystyle{
\vphantom{\Big(}
=
-
z^{-\delta(a)}
\Pi_{-\frac d2}
\varphi(a)
(\id_V+z^{-\Delta}U)^{-1}
\Psi_{\frac d2}
+
z^{-\delta(a)}
\Pi_{-\frac d2}
(\id_V+z^{-\Delta}U)^{-1}
\varphi(a)
\Psi_{\frac d2}
\,,}
\end{array}
\end{equation}
Since, by assumption, $a\in\mf g_{\geq\frac12}$,
we have $\varphi(a)\in(\End V)[\geq\frac12]$,
and therefore 
$\Pi_{-\frac d2} \varphi(a)=0$, $\varphi(a)\Psi_{\frac d2}=0$.
Hence, the RHS of \eqref{20170626:eq10} vanishes, proving \eqref{20170626:eq7}.

\begin{flushright}
\qedsymbol
\end{flushright}

%%%%%%%%%%%%%%%%%%%%%%%%%%%%%%%%%%%%%%%
\section{\texorpdfstring{$W$}{W}-algebras for classical Lie algebras and the (generalized) Yangian identity}
\label{sec:5}

%%%
\subsection{Preliminaries from linear algebra}
\label{sec:5.1}

We review here some linear algebra results which were discussed in \cite{DSKV18}
and which will be needed in the sequel.

Given a vector space $V$ of dimension $N$, 
we denote by $\Omega_V\in\End V\otimes\End V$
the permutation map:
\begin{equation}\label{Omega}
\Omega_V(v_1\otimes v_2)=v_2\otimes v_1
\,\,\text{ for all }\,\,v_1,v_2\in V
\,.
\end{equation}
We shall sometimes write $\Omega_V=\Omega_V^\prime\otimes\Omega_V^{\prime\prime}$
to denote, as usual, a sum of monomials in $\End V\otimes\End V$.
In fact, we can write an explicit formula:
$\Omega=\sum_{i,j=1}^NE_{ij}\otimes E_{ji}$,
where $E_{ij}$ is the ``standard'' basis of $\End V$ consisting of elementary matrices 
w.r.t. any basis of $V$
(obviously, $\Omega$ does not depend on the choice of this basis).
\begin{lemma}[{\cite[Lem.5.1]{DSKV18}}]\label{20170322:lem1}
Let $U$ and $W$ be vector spaces,
and let $A,B\in U\to W$ be linear maps.
We have
\begin{equation}\label{eq:permut}
\Omega_W(A\otimes B)=(B\otimes A)\Omega_U
\end{equation}
\end{lemma}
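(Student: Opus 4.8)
The statement to prove is Lemma~\ref{20170322:lem1}: for linear maps $A,B\colon U\to W$, one has $\Omega_W(A\otimes B)=(B\otimes A)\Omega_U$ as maps $U\otimes U\to W\otimes W$.

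The plan is to check the identity on decomposable tensors, which suffices by linearity. First I would take an arbitrary elementary tensor $u_1\otimes u_2\in U\otimes U$ and evaluate the left-hand side: by definition of the tensor product of maps, $(A\otimes B)(u_1\otimes u_2)=A(u_1)\otimes B(u_2)$, and then applying $\Omega_W$ (the permutation of factors in $W\otimes W$) gives $B(u_2)\otimes A(u_1)$. Next I would evaluate the right-hand side on the same element: $\Omega_U(u_1\otimes u_2)=u_2\otimes u_1$, and then $(B\otimes A)(u_2\otimes u_1)=B(u_2)\otimes A(u_1)$. The two results agree, and since elementary tensors span $U\otimes U$, the two linear maps coincide.

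There is essentially no obstacle here; the only point requiring a word of care is that $\Omega_V$ as originally defined in \eqref{Omega} acts on $V\otimes V$ for a single space $V$, whereas here it appears both as $\Omega_U$ on $U\otimes U$ and as $\Omega_W$ on $W\otimes W$. One should simply note that the permutation-of-factors map is defined on $Z\otimes Z$ for any vector space $Z$ by the same formula $z_1\otimes z_2\mapsto z_2\otimes z_1$, so the notation $\Omega_U,\Omega_W$ is unambiguous. Alternatively, one could give the computation in terms of the explicit formula $\Omega=\sum_{i,j}E_{ij}\otimes E_{ji}$ after fixing bases, but the coordinate-free one-line check on decomposable tensors is cleaner and I would present that.

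\begin{proof}
It suffices to check the identity on decomposable tensors $u_1\otimes u_2\in U\otimes U$, since these span $U\otimes U$ and both sides are linear. On the one hand,
\[
\Omega_W(A\otimes B)(u_1\otimes u_2)
=\Omega_W\big(A(u_1)\otimes B(u_2)\big)
=B(u_2)\otimes A(u_1)
\,.
\]
On the other hand,
\[
(B\otimes A)\Omega_U(u_1\otimes u_2)
=(B\otimes A)(u_2\otimes u_1)
=B(u_2)\otimes A(u_1)
\,.
\]
The two expressions coincide, proving \eqref{eq:permut}.
\end{proof}
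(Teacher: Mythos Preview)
Your proof is correct. The paper does not give its own proof of this lemma---it merely cites it from \cite[Lem.5.1]{DSKV18}---so there is nothing to compare against; your elementary verification on decomposable tensors is exactly the standard argument one would expect.
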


Let $U$ and $W$ be $M$-dimensional vector spaces
and let $\langle\cdot\,|\,\cdot\rangle:\,W\times U\to\mb F$ 
be a non-degenerate pairing between them.
Let $\{u_k\}_{k=1}^M$ be a basis of $U$ and let $\{w^k\}_{k=1}^M$ be the dual basis of $W$
with respect to $\langle\cdot\,|\,\cdot\rangle$:
$\langle w^k|u_h\rangle=\delta_{h,k}$ for all $h,k=1,\dots,M$.
Recall that we have the following completeness relations:
\begin{equation}\label{eq:completeness}
\sum_{k=1}^M \langle w^k|u\rangle u_k
=
u
\,\,,\,\,\,\,
\sum_{k=1}^M \langle w|u_k\rangle w^k
=w
\,\,\text{ for all } u\in U,\,w\in W
\,.
\end{equation}
For $A\in\End U$, $B\in\End W$, $C\in\Hom(U,W)$, $D\in\Hom(W,U)$, 
we define their adjoints (with respect to $\langle\cdot\,|\,\cdot\rangle$)
$A^\dagger\in\End W$, $B^\dagger\in\End U$, $C^\dagger\in\Hom(U,W)$, $D^\dagger\in\Hom(W,U)$, 
by the formulas ($u,u_1\in U$, $w,w_1\in W$)
\begin{equation}\label{20170704:eq1}
\begin{array}{l}
\displaystyle{
\vphantom{\Big(}
\langle A^\dagger(w)|u\rangle=\langle w|A(u)\rangle
\,,\,\,
\langle w|B^\dagger(u)\rangle=\langle B(w)|u\rangle
\,,} \\
\displaystyle{
\vphantom{\Big(}
\langle C^\dagger(u_1)|u\rangle=\langle C(u)|u_1\rangle
\,,\,\,
\langle w|D^\dagger(w_1)\rangle=\langle w_1|D(w)\rangle
\,.}
\end{array}
\end{equation}
Moreover, it follows from the completeness relations \eqref{eq:completeness}
and the above definition \eqref{20170704:eq1} of adjoints, that 
\begin{equation}\label{eq:abasis}
\begin{array}{l}
\displaystyle{
\vphantom{\Big(}
\sum_{k=1}^Mw^k\otimes A(u_k)=\sum_{k=1}^MA^\dagger(w^k)\otimes u_k
\,,\,\,
\sum_{k=1}^MB(w^k)\otimes u_k=\sum_{k=1}^M w^k\otimes B^\dagger(u_k)
\,,} \\
\displaystyle{
\vphantom{\Big(}
\sum_{k=1}^Mw^k\otimes C(u_k)=\sum_{k=1}^M C^\dagger(u_k)\otimes w^k
\,,\,\,
\sum_{k=1}^MD(w^k)\otimes u_k=\sum_{k=1}^M u_k\otimes D^\dagger(w^k)
\,.}
\end{array}
\end{equation}
We shall denote by $\Omega_U^{\dagger1}$ (resp. $\Omega_U^{\dagger2}$)
the element of $\End W\otimes\End U$ (resp. $\End U\otimes\End W$)
obtained taking the adjoint on the first (resp. second) factor of $\Omega_U$.
Similarly, for $\Omega_W^{\dagger1}\in\End U\otimes\End W$ 
and $\Omega_W^{\dagger2}\in\End W\otimes\End U$.
\begin{lemma}\label{20170322:lem2}
\begin{enumerate}[(a)]
\item
We have 
$\Omega_U^{\dagger2}=\Omega_W^{\dagger1}=\Omega_{U,W}^\dagger$,
where 
\begin{equation}\label{eq:omega+}
\Omega_{U,W}^\dagger(u\otimes w)
:=
\langle w|u\rangle\sum_{k=1}^M u_k\otimes w^k
\,\,,\,\,\,\,
u\in U,\,w\in W
\,.
\end{equation}
Simlarly, we have
$\Omega_U^{\dagger1}=\Omega_W^{\dagger2}=\Omega_{W,U}^\dagger$,
where
\begin{equation}\label{eq:omega+b}
\Omega_{W,U}^\dagger(w\otimes u)
:=
\langle w|u\rangle\sum_{k=1}^M w^k\otimes u_k
\,\,,\,\,\,\,
u\in U,\,w\in W
\,.
\end{equation}
\item
For every $A\in\End U$, we have
\begin{equation}\label{eq:permut-tau}
\begin{array}{l}
\displaystyle{
\vphantom{\Big(}
(A\otimes\id_W)\Omega_{U,W}^\dagger=(\id_U\otimes A^\dagger)\Omega_{U,W}^\dagger
\,\,,\,\,\,\,
\Omega_{U,W}^\dagger(A\otimes\id_W)=\Omega_{U,W}^\dagger(\id_U\otimes A^\dagger)
\,,} \\
\displaystyle{
\vphantom{\Big(}
(A^\dagger\otimes\id_U)\Omega_{W,U}^\dagger=(\id_W\otimes A)\Omega_{W,U}^\dagger
\,\,,\,\,\,\,
\Omega_{W,U}^\dagger(A^\dagger\otimes\id_U)=\Omega_{W,U}^\dagger(\id_W\otimes A)
\,.}
\end{array}
\end{equation}
\end{enumerate}
\end{lemma}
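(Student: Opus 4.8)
\emph{Proof proposal.} The plan is to prove everything by direct evaluation on decomposable tensors, using only the coordinate expression of the permutation map, the definitions \eqref{20170704:eq1} of the various adjoints, and the completeness relations \eqref{eq:completeness}; no conceptual input beyond careful bookkeeping is needed.

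For part (a), I would start from the explicit formula $\Omega_U=\sum_{k,l}E_{kl}\otimes E_{lk}$, where $E_{kl}\in\End U$ is the elementary matrix $E_{kl}(u_m)=\delta_{lm}u_k$ with respect to the basis $\{u_k\}$. The first step is to identify the adjoint of the second factor: from \eqref{20170704:eq1} one computes $\langle E_{lk}^\dagger(w^j)|u_m\rangle=\langle w^j|E_{lk}(u_m)\rangle=\delta_{jl}\delta_{km}$, so $E_{lk}^\dagger$ is the elementary matrix $w^l\mapsto w^k$ in the dual basis $\{w^j\}$ of $W$. Substituting into $\Omega_U^{\dagger2}=\sum_{k,l}E_{kl}\otimes E_{lk}^\dagger$, evaluating on $u\otimes w$ with $u=\sum_k\langle w^k|u\rangle u_k$ and $w=\sum_l\langle w|u_l\rangle w^l$, and collapsing the double sum via the completeness relation $\sum_l\langle w|u_l\rangle\langle w^l|u\rangle=\langle w|u\rangle$, one gets exactly $\langle w|u\rangle\sum_k u_k\otimes w^k$, which is the definition \eqref{eq:omega+} of $\Omega_{U,W}^\dagger$. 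The equality $\Omega_W^{\dagger1}=\Omega_{U,W}^\dagger$ follows from the symmetric computation starting from $\Omega_W=\sum_{k,l}F_{kl}\otimes F_{lk}$ written in the basis $\{w^k\}$ (after identifying $F_{kl}^\dagger$ as the elementary matrix $u_k\mapsto u_l$ and relabelling indices); alternatively it drops out of the first relation in \eqref{eq:abasis} applied to $A=\id_U$ together with a flip of the two tensor factors. The two identities for $\Omega_{W,U}^\dagger$ in \eqref{eq:omega+b} are obtained by interchanging the roles of $U$ and $W$ (and of the bases $\{u_k\}$, $\{w^k\}$) throughout.

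For part (b), I would simply evaluate both sides of each identity in \eqref{eq:permut-tau} on a decomposable tensor and factor out the scalar coming from the pairing. For the first identity, applying $(A\otimes\id_W)\Omega_{U,W}^\dagger$ and $(\id_U\otimes A^\dagger)\Omega_{U,W}^\dagger$ to $u\otimes w$ and pulling out $\langle w|u\rangle$ reduces the claim to $\sum_k A(u_k)\otimes w^k=\sum_k u_k\otimes A^\dagger(w^k)$, which is precisely the first relation in \eqref{eq:abasis} with its two tensor factors exchanged. For the second identity, evaluating on $u\otimes w$ yields $\langle w|A(u)\rangle\sum_k u_k\otimes w^k$ on the left and $\langle A^\dagger(w)|u\rangle\sum_k u_k\otimes w^k$ on the right, and these coincide by the defining property $\langle w|A(u)\rangle=\langle A^\dagger(w)|u\rangle$ of the adjoint in \eqref{20170704:eq1}. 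The remaining two identities, those involving $\Omega_{W,U}^\dagger$, follow from the first two by exchanging $U$ and $W$ and using part (a).

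The whole argument is routine; the one point I would be most careful about is the bookkeeping of which tensor slot each adjoint acts on, combined with the fact that the pairing $\langle\cdot\,|\,\cdot\rangle\colon W\times U\to\mb F$ is not symmetric in its two arguments, so that $\Omega_{U,W}^\dagger$ and $\Omega_{W,U}^\dagger$ must be kept scrupulously distinct and the correct completeness relation from \eqref{eq:completeness} invoked in each case.
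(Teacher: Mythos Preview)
Your proposal is correct and follows essentially the same direct-computation approach as the paper. The only cosmetic difference is that for part (a) the paper verifies the identity by pairing $\Omega_U^{\dagger2}(u\otimes w)$ against an arbitrary $w_1\otimes u_1$ using the Sweedler-style notation $\Omega_U=\Omega_U'\otimes\Omega_U''$, rather than your explicit elementary-matrix computation; and for part (b) the paper packages the first two identities into a single combined equation $(A_1\otimes\id_W)\Omega_{U,W}^\dagger(A_2\otimes\id_W)=(\id_U\otimes A_1^\dagger)\Omega_{U,W}^\dagger(\id_U\otimes A_2^\dagger)$ before evaluating, whereas you treat them one at a time --- but both routes invoke exactly the same ingredients (definition of adjoint and the relation \eqref{eq:abasis}).
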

\begin{proof}
If we apply $\Omega_U^{\dagger2}$ to $u\otimes w\in U\otimes W$
and pair the result with $w_1\otimes u_1\in W\otimes U$,
we get
$$
\langle w_1|\Omega_U^{\prime}(u)\rangle\langle(\Omega_U^{\prime\prime})^\dagger(w)|u_1\rangle
=
\langle w_1|\Omega_U^{\prime}(u)\rangle\langle w|\Omega_U^{\prime\prime}(u_1)\rangle
=
\langle w_1|u_1\rangle\langle w|u\rangle
\,,
$$
which is the same result that we get by applying the RHS of \eqref{eq:omega+}
to $u\otimes w$ and pairing it with $w_1\otimes u_1$.
This proves that $\Omega_U^{\dagger2}=\Omega_{U,W}^{\dagger}$.
Similar computations show the remaining identities of part (a).

The four equations \eqref{eq:permut-tau}
are equivalent to 
\begin{equation}\label{20170703:eq1}
\begin{array}{l}
\displaystyle{
\vphantom{\Big(}
(A_1\otimes\id_W)\Omega_{U,W}^\dagger(A_2\otimes\id_W)
=
(\id_U\otimes A_1^\dagger)\Omega_{U,W}^\dagger(\id_U\otimes A_2^\dagger)
\,,} \\
\displaystyle{
\vphantom{\Big(}
(A_1^\dagger\otimes\id_U)\Omega_{W,U}^\dagger(A_2^\dagger\otimes\id_U)
=
(\id_W\otimes A_1)\Omega_{W,U}^\dagger(\id_W\otimes A_2)
\,,}
\end{array}
\end{equation}
where $A_1,A_2\in\End U$.
If we apply the LHS of the first equation in \eqref{20170703:eq1} to $u\otimes w\in U\otimes W$, 
we get, by \eqref{eq:omega+},
\begin{equation}\label{20170703:eq2}
(A_1\otimes\id_W)\Omega_{U,W}^\dagger(A_2(u)\otimes w)
=
\langle w|A_2(u)\rangle \sum_{k=1}^M A_1(u_k)\otimes w^k
\,.
\end{equation}
On the other hand, 
if we apply the RHS of the first equation in \eqref{20170703:eq1} to $u\otimes w$,
we get
$$
(\id_U\otimes A_1^\dagger)\Omega_{U,W}^\dagger(u\otimes A_2^\dagger(w))
=
\langle A_2^\dagger(w)|u\rangle \sum_{k=1}^M u_k\otimes A_1^\dagger(w^k)
\,,
$$
which is the same as \eqref{20170703:eq2} by the definition \eqref{20170704:eq1} of adjoint and the
first identity of \eqref{eq:abasis}.
Similarly for the second equality in \eqref{20170703:eq1}.
\end{proof}

Let $V$ be a vector space of dimension $N$,
with a non-degenerate bilinear form $\langle\cdot\,|\,\cdot\rangle:\,V\times V\to\mb F$,
which is symmetric or skewsymmetric:
\begin{equation}\label{eq:epsilon}
\langle v_1|v_2\rangle=\epsilon\langle v_2|v_1\rangle
\,,\,\,v_1,v_2\in V
\,,\,\,\text{ where }
\epsilon\in\{\pm1\}
\,.
\end{equation}
Let $\{v_k\}_{k=1}^N$ and $\{v^k\}_{k=1}^N$
be dual bases of $V$, i.e.
$\langle v^k|v_h\rangle=\epsilon\langle v_k|v^h\rangle=\delta_{h,k}$.
Let $A^\dagger$ be the adjoint of $A\in\End V$,
i.e. $\langle v_1|A^\dagger(v_2)\rangle=\langle A(v_1)|v_2\rangle$, $v_1,v_2\in V$
(cf. \eqref{20170704:eq1}).
By Lemma \ref{20170322:lem2} 
we have
\begin{lemma}\label{20170322:lem2b}
\begin{enumerate}[(a)]
\item
$\Omega_V^{\dagger1}=\Omega_V^{\dagger2}=:\Omega_V^\dagger$, where
\begin{equation}\label{eq:omega+c}
\Omega_V^\dagger(v_1\otimes v_2)
:=
\langle v_1|v_2\rangle\sum_{k=1}^N v^k\otimes v_k
\,.
\end{equation}
\item
For every $A\in\End V$, we have
\begin{equation}\label{eq:permut-taub}
(A\otimes\id_V)\Omega_{V}^\dagger=(\id_V\otimes A^\dagger)\Omega_{V}^\dagger
\,\,,\,\,\,\,
\Omega_{V}^\dagger(A\otimes\id_V)=\Omega_{V}^\dagger(\id_V\otimes A^\dagger)
\,,
\end{equation}
\end{enumerate}
\end{lemma}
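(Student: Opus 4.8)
The plan is to obtain this statement as the specialization of Lemma \ref{20170322:lem2} to the case $U=W=V$, taking as non-degenerate pairing the form $\langle\cdot\,|\,\cdot\rangle$ of \eqref{eq:epsilon}, as basis $\{u_k\}$ the basis $\{v_k\}$, and as dual basis $\{w^k\}$ the basis $\{v^k\}$.

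First I would reconcile the two notions of adjoint. The adjoint $A^\dagger$ introduced just before the statement is characterized by $\langle v_1|A^\dagger(v_2)\rangle=\langle A(v_1)|v_2\rangle$, whereas the one of \eqref{20170704:eq1}, specialized to $U=W=V$, is characterized by $\langle A^\dagger(v_2)|v_1\rangle=\langle v_2|A(v_1)\rangle$; applying \eqref{eq:epsilon} to both sides of the latter, the common factor $\epsilon$ cancels and one recovers the former, so the two agree. Likewise $\langle v^k|v_h\rangle=\delta_{h,k}$ together with \eqref{eq:epsilon} gives $\epsilon\langle v_k|v^h\rangle=\delta_{h,k}$, so $\{v_k\}$ and $\{v^k\}$ are genuine dual bases in the sense required by Lemma \ref{20170322:lem2}.

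For part (a): since $U=W=V$, the permutation maps $\Omega_U$ and $\Omega_W$ of Lemma \ref{20170322:lem2} are one and the same operator $\Omega_V\in\End V\otimes\End V$, so $\Omega_U^{\dagger2}$ is precisely the adjoint $\Omega_V^{\dagger2}$ on the second factor and $\Omega_W^{\dagger1}$ is precisely the adjoint $\Omega_V^{\dagger1}$ on the first factor. The equality $\Omega_U^{\dagger2}=\Omega_W^{\dagger1}$ of Lemma \ref{20170322:lem2}(a) then reads $\Omega_V^{\dagger1}=\Omega_V^{\dagger2}=:\Omega_V^\dagger$. To extract the explicit expression \eqref{eq:omega+c} I would use the companion chain $\Omega_U^{\dagger1}=\Omega_W^{\dagger2}=\Omega_{W,U}^\dagger$ from the same lemma: its left-hand side is again $\Omega_V^{\dagger1}=\Omega_V^\dagger$, and substituting $U=W=V$, $u_k=v_k$, $w^k=v^k$ in formula \eqref{eq:omega+b} turns $\Omega_{W,U}^\dagger$ into the map $v_1\otimes v_2\mapsto\langle v_1|v_2\rangle\sum_k v^k\otimes v_k$, which is exactly \eqref{eq:omega+c}.

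For part (b): I would specialize the four identities \eqref{eq:permut-tau} of Lemma \ref{20170322:lem2}(b) to $U=W=V$. By part (a) one has $\Omega_{U,W}^\dagger=\Omega_{W,U}^\dagger=\Omega_V^\dagger$, so the first two of these identities become, for every $A\in\End V$, the relations $(A\otimes\id_V)\Omega_V^\dagger=(\id_V\otimes A^\dagger)\Omega_V^\dagger$ and $\Omega_V^\dagger(A\otimes\id_V)=\Omega_V^\dagger(\id_V\otimes A^\dagger)$, i.e.\ \eqref{eq:permut-taub}; the last two are obtained from these by replacing $A$ with $A^\dagger$, and since $A\mapsto A^\dagger$ is a bijection of $\End V$ they carry no new content. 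I do not expect a genuine obstacle here: the only point requiring care is the bookkeeping of the first step (matching the two adjoint conventions and the dual-basis relations) together with the choice of which of the two chains in Lemma \ref{20170322:lem2}(a) to invoke, so that the explicit formula comes out as \eqref{eq:omega+c} rather than its mirror image with $v_k\otimes v^k$; all the computational substance is already packaged in Lemma \ref{20170322:lem2}.
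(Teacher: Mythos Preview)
Your proposal is correct and follows exactly the paper's approach: the paper's proof consists of the single sentence ``It is the same as Lemma \ref{20170322:lem2} in the special case $U=W=V$,'' and you have simply spelled out the details of that specialization, including the bookkeeping of matching the two adjoint conventions and selecting the chain $\Omega_U^{\dagger1}=\Omega_W^{\dagger2}=\Omega_{W,U}^\dagger$ so that the explicit formula comes out in the form \eqref{eq:omega+c}.
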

\begin{proof}
It is the same as Lemma \ref{20170322:lem2}  in the special case $U=W=V$.
\end{proof}
Let $U$ and $W$ be vector spaces and let
$\Psi:\,U\hookrightarrow V$ be an injective linear map,
and $\Pi:\,V\twoheadrightarrow W$ be a surjective linear map,
with the property that
\begin{equation}\label{20170704:eq2}
\im\Psi=(\ker\Pi)^{\perp}
\,\,\text{ w.r.t. }\,\,
\langle\cdot\,|\,\cdot\rangle
\,.
\end{equation}
Then, we have an induced non-degenerate pairing
$\langle\cdot\,|\,\cdot\rangle^{\Psi\Pi}:\,W\times U\to\mb F$ given by
\begin{equation}\label{20170704:eq3}
\langle w|u\rangle^{\Psi\Pi}
:=
\langle \Pi^{-1}(w)|\Psi(u)\rangle
\,,\,\,
u\in U,\,w\in W
\,.
\end{equation}
\begin{lemma}\label{20170411:lem2}
For $A\in\End V$, we have
$$
(\Pi A\Psi)^{\dagger}=\epsilon \Pi A^\dagger \Psi
\,\in\Hom(U,W)
\,,
$$
where $\dagger$ in the LHS is w.r.t. the pairing \eqref{20170704:eq3} between $U$ and $W$
(cf. the third equation in \eqref{20170704:eq1}),
while in the RHS is w.r.t. the bilinear form $\langle\cdot\,|\,\cdot\rangle$ of $V$.
\end{lemma}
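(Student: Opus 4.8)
The plan is to check the claimed identity directly against the defining property of the adjoint with respect to the induced pairing \eqref{20170704:eq3}. Since $\langle\cdot\,|\,\cdot\rangle^{\Psi\Pi}$ is non-degenerate, the adjoint $(\Pi A\Psi)^\dagger\in\Hom(U,W)$ is uniquely characterized by the relation $\langle (\Pi A\Psi)^\dagger(u_1)|u\rangle^{\Psi\Pi}=\langle (\Pi A\Psi)(u)|u_1\rangle^{\Psi\Pi}$ for all $u,u_1\in U$ (cf. the third equation of \eqref{20170704:eq1}). Hence it suffices to show that the operator $\epsilon\,\Pi A^\dagger\Psi$ satisfies the same relation, i.e. that $\langle \epsilon\,\Pi A^\dagger\Psi(u_1)|u\rangle^{\Psi\Pi}=\langle \Pi A\Psi(u)|u_1\rangle^{\Psi\Pi}$ for all $u,u_1\in U$.

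First I would rewrite both sides using the definition \eqref{20170704:eq3}. For the right-hand side, $A\Psi(u)$ is a $\Pi$-preimage of $\Pi A\Psi(u)$, and $\Psi(u_1)\in\im\Psi=(\ker\Pi)^{\perp}$ by \eqref{20170704:eq2}, so the quantity $\langle\Pi^{-1}(\cdot)|\Psi(u_1)\rangle$ does not depend on the chosen representative; thus the right-hand side equals $\langle A\Psi(u)|\Psi(u_1)\rangle$. Applying the same remark to the left-hand side, with $A^\dagger\Psi(u_1)$ as a representative of $\Pi A^\dagger\Psi(u_1)$, gives $\epsilon\langle A^\dagger\Psi(u_1)|\Psi(u)\rangle$.

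It then remains to match $\epsilon\langle A^\dagger\Psi(u_1)|\Psi(u)\rangle$ with $\langle A\Psi(u)|\Psi(u_1)\rangle$, which follows at once from the $\epsilon$-(skew)symmetry \eqref{eq:epsilon} together with the defining relation $\langle v_1|A^\dagger(v_2)\rangle=\langle A(v_1)|v_2\rangle$ of the adjoint on $V$: indeed $\epsilon\langle A^\dagger\Psi(u_1)|\Psi(u)\rangle=\langle \Psi(u)|A^\dagger\Psi(u_1)\rangle=\langle A\Psi(u)|\Psi(u_1)\rangle$, using $\epsilon^2=1$. This is exactly the required equality, so by the uniqueness of the adjoint the lemma follows. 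There is no serious obstacle here; the only point deserving attention is the well-definedness invoked in the second step, which is precisely what hypothesis \eqref{20170704:eq2} guarantees, the rest being a short manipulation of scalars.
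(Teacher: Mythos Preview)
Your proof is correct and follows essentially the same approach as the paper's: both verify the identity by unwinding the defining relation of the adjoint with respect to $\langle\cdot\,|\,\cdot\rangle^{\Psi\Pi}$ and reducing to the $\epsilon$-(skew)symmetry of $\langle\cdot\,|\,\cdot\rangle$ on $V$ together with the defining relation of $A^\dagger$. Your version is slightly more explicit about the well-definedness of $\langle\Pi^{-1}(\cdot)|\Psi(u)\rangle$, which the paper uses tacitly.
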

\begin{proof}
By the definition \eqref{20170704:eq3} of $\langle\cdot\,|\,\cdot\rangle^{\Psi\Pi}$,
and the symmetry condition \eqref{eq:epsilon},
we have
$$
\begin{array}{l}
\displaystyle{
\vphantom{\Big(}
\langle (\Pi A\Psi)^{\dagger}(u_1)|u_2\rangle^{\Psi\Pi}
=
\langle \Pi A\Psi(u_2)|u_1\rangle^{\Psi\Pi}
=
\langle A\Psi(u_2)| \Psi(u_1)\rangle
=
\langle \Psi(u_2)|A^\dagger\Psi(u_1)\rangle
} \\
\displaystyle{
\vphantom{\Big(}
=
\epsilon\langle A^\dagger\Psi(u_1) |\Psi(u_2) \rangle
=
\epsilon\langle \Pi A^\dagger\Psi(u_1) | u_2 \rangle^{\Psi\Pi}
\,.}
\end{array}
$$
\end{proof}
\begin{lemma}\label{20170322:lem5}
If $\{v_k\}_{k=1}^N$, $\{v^k\}_{k=1}^N$ and $\{u_h\}_{h=1}^M$, $\{w^h\}_{h=1}^M$
are dual bases as above,
we have:
\begin{equation}\label{eq:complete2}
\begin{array}{l}
\displaystyle{
\vphantom{\big(}
\sum_{h=1}^Mw^h\otimes \Psi(u_h)
=
\sum_{k=1}^N\Pi(v^k)\otimes v_k
\,\in W\otimes V
\,,} \\
\displaystyle{
\vphantom{\big(}
\sum_{h=1}^M\Psi(u_h)\otimes w^h
=
\epsilon\sum_{k=1}^Nv^k\otimes \Pi(v_k)
\,\in V\otimes W
\,.}
\end{array}
\end{equation}
\end{lemma}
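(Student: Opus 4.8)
The plan is to prove each of the two identities in \eqref{eq:complete2} by the standard device of testing both sides against an arbitrary decomposable tensor and invoking non-degeneracy of the two pairings in play; once the test is set up correctly, only the completeness relations \eqref{eq:completeness} and the definition \eqref{20170704:eq3} of the induced pairing are needed.

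The one genuinely non-formal ingredient I would isolate first is the identity
\begin{equation*}
\langle\Pi(v)\,|\,u\rangle^{\Psi\Pi}=\langle v\,|\,\Psi(u)\rangle
\qquad\text{for all } v\in V,\ u\in U.
\end{equation*}
Indeed, by the compatibility hypothesis \eqref{20170704:eq2} the vector $v$ is itself a valid choice of preimage $\Pi^{-1}(\Pi(v))$, and any two preimages differ by an element of $\ker\Pi=(\im\Psi)^{\perp}$, which pairs trivially with $\Psi(u)\in\im\Psi$; hence the right-hand side of \eqref{20170704:eq3} is unchanged if $\Pi^{-1}(\Pi(v))$ is replaced by $v$. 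This is the only place where \eqref{20170704:eq2} enters, and it is the step I expect to need the most care, since everything afterwards is completeness-relation bookkeeping.

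For the first identity, both sides lie in $W\otimes V$; since $\langle\cdot\,|\,\cdot\rangle^{\Psi\Pi}$ is non-degenerate between $W$ and $U$ and $\langle\cdot\,|\,\cdot\rangle$ is non-degenerate on $V$, it suffices to check that $\langle\cdot\,|\,u\rangle^{\Psi\Pi}\otimes\langle\cdot\,|\,v\rangle$ sends both sides to the same scalar for all $u\in U$, $v\in V$. The left-hand side gives $\sum_h\langle w^h|u\rangle^{\Psi\Pi}\langle\Psi(u_h)|v\rangle=\langle\Psi(u)|v\rangle$ by the completeness relation $u=\sum_h\langle w^h|u\rangle^{\Psi\Pi}u_h$ for the dual bases $\{u_h\},\{w^h\}$; the right-hand side gives $\sum_k\langle\Pi(v^k)|u\rangle^{\Psi\Pi}\langle v_k|v\rangle$, which by the displayed identity equals $\sum_k\langle v^k|\Psi(u)\rangle\langle v_k|v\rangle=\langle\Psi(u)|v\rangle$ using $\Psi(u)=\sum_k\langle v^k|\Psi(u)\rangle v_k$ for the dual bases of $V$. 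For the second identity I would argue identically in $V\otimes W$, testing with $\langle\cdot\,|\,v\rangle\otimes\langle\cdot\,|\,u\rangle^{\Psi\Pi}$: the left-hand side again gives $\langle\Psi(u)|v\rangle$, while the right-hand side gives $\epsilon\sum_k\langle v^k|v\rangle\langle\Pi(v_k)|u\rangle^{\Psi\Pi}=\epsilon\sum_k\langle v^k|v\rangle\langle v_k|\Psi(u)\rangle=\epsilon\langle v|\Psi(u)\rangle=\epsilon^2\langle\Psi(u)|v\rangle=\langle\Psi(u)|v\rangle$, using $v=\sum_k\langle v^k|v\rangle v_k$, the (skew)symmetry \eqref{eq:epsilon}, and $\epsilon^2=1$. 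Alternatively, both identities drop out at once from the observation that, under the isomorphisms $W\otimes V\cong\Hom(U,V)$ and $V\otimes W\cong\Hom(U,V)$ induced by $W\cong U^{*}$ via $\langle\cdot\,|\,\cdot\rangle^{\Psi\Pi}$, all four tensors represent the single map $\Psi:U\to V$.
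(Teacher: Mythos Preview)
Your proof is correct and follows essentially the same approach as the paper: test both sides against $u\otimes v$ (respectively $v\otimes u$) using non-degeneracy of the two pairings, and reduce via the completeness relations to the scalar $\langle\Psi(u)|v\rangle$. The paper invokes the identity $\langle\Pi(v)|u\rangle^{\Psi\Pi}=\langle v|\Psi(u)\rangle$ directly from the definition \eqref{20170704:eq3}, while you take the extra care to justify it via the orthogonality condition \eqref{20170704:eq2}; in the first identity you also apply the completeness relations in the opposite order (summing the $v^k$ factor first rather than the $v_k$ factor), but this is an immaterial variation.
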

\begin{proof}
Pairing the LHS and the RHS of the first equation in \eqref{eq:complete2} with $u\otimes v\in U\otimes V$, 
we get, by the completeness identity \eqref{eq:completeness},
$$
\sum_{h=1}^M\langle w^h|u\rangle^{\Psi\Pi} \langle \Psi(u_h)|v\rangle
=
\langle \Psi(u)|v\rangle
\,,
$$
and 
$$
\sum_{k=1}^N
\langle \Pi(v^k)|u\rangle^{\Psi\Pi} \langle v_k|v\rangle
=
\epsilon\langle \Pi(v)|u\rangle^{\Psi\Pi}
=
\epsilon\langle v|\Psi(u)\rangle
=
\langle \Psi(u)|v\rangle
\,,
$$
proving the first equation in \eqref{eq:complete2}.
Similarly, if we pair both sides of the second equation in \eqref{eq:complete2} 
with $v\otimes u\in V\otimes U$,
we get
$$
\sum_{h=1}^M\langle\Psi(u_h)|v\rangle\langle w^h|u\rangle^{\Psi\Pi}
=
\langle\Psi(u)|v\rangle
\,,$$
and
$$
\epsilon\sum_{k=1}^N\langle v^k|v\rangle\langle\Pi(v_k)|u\rangle^{\Psi\Pi}
=
\epsilon\langle\Pi(v)|u\rangle^{\Psi\Pi}
=
\epsilon\langle v|\Psi(u)\rangle
=
\langle\Psi(u)|v\rangle
\,.
$$
\end{proof}
\begin{lemma}\label{20170322:lem6}
The following identity holds in $\Hom(V,W)\otimes\Hom(U,V)$:
\begin{equation}\label{permut-omega-dagger1}
(\Pi\otimes\id_V)\Omega_V^\dagger(\id_V\otimes\Psi)
=
(\id_{W}\otimes\Psi)\Omega_{W,U}^\dagger(\Pi\otimes\id_{U})
\,,
\end{equation}
and the following identity holds in $\Hom(U,V)\otimes\Hom(V,W)$:
\begin{equation}\label{permut-omega-dagger2}
(\id_V\otimes\Pi)\Omega_V^\dagger(\Psi\otimes\id_V)
=
(\Psi\otimes\id_{W})\Omega_{U,W}^\dagger(\id_{U}\otimes\Pi)
\,.
\end{equation}
\end{lemma}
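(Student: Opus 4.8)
The plan is to verify both displayed identities by evaluating each side on a generic decomposable tensor and comparing. Both sides of \eqref{permut-omega-dagger1} are elements of $\Hom(V,W)\otimes\Hom(U,V)$, hence may be regarded as linear maps $V\otimes U\to W\otimes V$, and two such maps agree as soon as they agree on every $v\otimes u$ with $v\in V$, $u\in U$; similarly the two sides of \eqref{permut-omega-dagger2} are maps $U\otimes V\to V\otimes W$. The only ingredients needed are the explicit formula \eqref{eq:omega+c} for $\Omega_V^\dagger$, the formulas \eqref{eq:omega+} and \eqref{eq:omega+b} for $\Omega_{U,W}^\dagger$ and $\Omega_{W,U}^\dagger$ (where the pairing between $W$ and $U$ is the induced one $\langle\cdot\,|\,\cdot\rangle^{\Psi\Pi}$ of \eqref{20170704:eq3}), and the completeness relations of Lemma \ref{20170322:lem5}.

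For \eqref{permut-omega-dagger1}: applying the left-hand side to $v\otimes u$ and using \eqref{eq:omega+c} gives $\langle v|\Psi(u)\rangle\sum_k\Pi(v^k)\otimes v_k$, while applying the right-hand side and using \eqref{eq:omega+b} gives $\langle\Pi(v)|u\rangle^{\Psi\Pi}\sum_h w^h\otimes\Psi(u_h)$. By the definition \eqref{20170704:eq3} of $\langle\cdot\,|\,\cdot\rangle^{\Psi\Pi}$ (which is unambiguous precisely because $\im\Psi=(\ker\Pi)^\perp$) the two scalar factors coincide, and the equality of the two $W\otimes V$-valued sums is exactly the first identity in \eqref{eq:complete2}. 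For \eqref{permut-omega-dagger2}, evaluating on $u\otimes v$ produces $\langle\Psi(u)|v\rangle\sum_k v^k\otimes\Pi(v_k)$ on the left (via \eqref{eq:omega+c}) and $\langle\Pi(v)|u\rangle^{\Psi\Pi}\sum_h\Psi(u_h)\otimes w^h$ on the right (via \eqref{eq:omega+}); here one rewrites $\langle\Pi(v)|u\rangle^{\Psi\Pi}=\langle v|\Psi(u)\rangle=\epsilon\langle\Psi(u)|v\rangle$ using the (skew)symmetry \eqref{eq:epsilon}, and the claim reduces to the second identity in \eqref{eq:complete2}, whose factor $\epsilon$ exactly absorbs this sign.

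The computation is routine and I do not expect a genuine obstacle; the points that need a little care are (i) keeping straight which pairing — $\langle\cdot\,|\,\cdot\rangle$ on $V$, or $\langle\cdot\,|\,\cdot\rangle^{\Psi\Pi}$ between $W$ and $U$ — enters each factor, which is forced by the source and target of the maps involved; (ii) the independence of $\Omega_V^\dagger$, $\Omega_{W,U}^\dagger$ and $\Omega_{U,W}^\dagger$ from the choice of dual bases, guaranteed by Lemmas \ref{20170322:lem2b}(a) and \ref{20170322:lem2}(a), which is what lets us use on both sides bases $\{v_k\},\{v^k\}$ of $V$ and $\{u_h\},\{w^h\}$ of $U,W$ tied together by \eqref{eq:complete2}; and (iii) the lone sign $\epsilon$ in \eqref{permut-omega-dagger2}. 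A more conceptual alternative would be to deduce both identities from Lemma \ref{20170322:lem1} applied to the maps $\Psi$ and $\Pi$ together with the adjoint relations \eqref{eq:abasis}, but the direct evaluation above is shorter and self-contained.
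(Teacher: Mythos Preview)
Your proof is correct and is essentially identical to the paper's own argument: both evaluate each side on a generic decomposable tensor, reduce the scalar factors via the definition \eqref{20170704:eq3} of $\langle\cdot\,|\,\cdot\rangle^{\Psi\Pi}$, and match the remaining tensor sums using the two identities of Lemma \ref{20170322:lem5}. Your additional remarks on basis-independence and the role of $\epsilon$ are accurate but not needed beyond what the paper already uses.
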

\begin{proof}
If we apply the LHS of \eqref{permut-omega-dagger1} to $v\otimes u\in V\otimes U$,
we get
$$
\langle v|\Psi(u)\rangle \sum_{k=1}^N
\Pi(v^k)\otimes v_k\,,
$$
while if we apply the RHS of \eqref{permut-omega-dagger1} to $v\otimes u$,
we get
$$
\langle \Pi(v)|u\rangle^{\Psi\Pi}
\sum_{h=1}^M w^h\otimes \Psi(u_h)
\,.
$$
Hence, equation \eqref{permut-omega-dagger1} follows by the definition \eqref{20170704:eq3}
of the pairing $\langle\cdot\,|\,\cdot\rangle^{\Psi\Pi}$ and by the first equation
in  \eqref{eq:complete2}.
Next, if we apply the LHS of \eqref{permut-omega-dagger2} to $u\otimes v\in U\otimes V$,
we get
$$
\langle \Psi(u)|v\rangle
\sum_{k=1}^Nv^k\otimes \Pi(v_k)
\,,
$$
while, if we apply the RHS of \eqref{permut-omega-dagger2} to $u\otimes v$,
we get
$$
\langle \Pi(v)|u\rangle^{\Psi\Pi}
\sum_{h=1}^M \Psi(u_h)\otimes w^h
\,.
$$
Equation \eqref{permut-omega-dagger2} follows by the definition \eqref{20170704:eq3}
of the pairing $\langle\cdot\,|\,\cdot\rangle^{\Psi\Pi}$ and by the second equation in \eqref{eq:complete2}.
\end{proof}

%%%
\subsection{The generalized Yangian identity}
\label{sec:5.2}

Let $\alpha,\beta,\gamma\in\mb F$.
Let $R$ be a unital associative algebra,
and let $U,W$ be $M$-dimensional vector spaces.
For $\beta\neq0$,
we also assume, as in Section \ref{sec:5.1},
that $U$ and $W$ are
endowed with a non-degenerate pairing $\langle\cdot\,|\,\cdot\rangle:\,W\times U\to\mb F$.
As usual, when denoting an element of $R\otimes\Hom(W,U)$
or of  $R\otimes\Hom(W,U)\otimes\Hom(W,U)$,
we omit the tensor product sign on the first factor,
i.e. we treat elements of $R$ as scalars.
\begin{definition}\label{def:gener-yang}
The \emph{generalized} $(\alpha,\beta,\gamma)$-\emph{Yangian identity}
for $A(z)\in R((z^{-1}))\otimes\Hom(W,U)$
is the following identity, holding in $R[[z^{-1},w^{-1}]][z,w]\otimes\Hom(W,U)\otimes\Hom(W,U)$:
\begin{equation}\label{eq:gener-yang}
\begin{array}{l}
\displaystyle{
\vphantom{\Big(}
(z-w+\alpha\Omega_U)
(A(z)\otimes\id_U)
(z+w+\gamma-\beta\Omega_{W,U}^\dagger)
(\id_W\otimes A(w))
} \\
\displaystyle{
\vphantom{\Big(}
=
(\id_U\otimes A(w))
(z+w+\gamma-\beta\Omega_{U,W}^\dagger)
(A(z)\otimes\id_W)
(z-w+\alpha\Omega_W)
\,.}
\end{array}
\end{equation}
\end{definition}
A special case is when $A\in R((z^{-1}))\otimes\End(V)$,
where the vector space $V$
is endowed with a non-degenerate bilinear form $\langle\cdot\,|\,\cdot\rangle$
if $\beta\neq0$,
which we assume to be symmetric or skewsymmetric,
and we let $\epsilon=+1$ and $-1$ respectively.
In this case, the generalized Yangian identity for $A(z)$ reads
\begin{equation}\label{eq:gener-yangV}
\begin{array}{l}
\displaystyle{
\vphantom{\Big(}
(z-w+\alpha\Omega_V)
(A(z)\otimes\id_V)
(z+w+\gamma-\beta\Omega_V^\dagger)
(\id_V\otimes A(w))
} \\
\displaystyle{
\vphantom{\Big(}
=
(\id_V\otimes A(w))
(z+w+\gamma-\beta\Omega_V^\dagger)
(A(z)\otimes\id_V)
(z-w+\alpha\Omega_V)
\,.}
\end{array}
\end{equation}
Using lemmas \ref{20170322:lem1} and \ref{20170322:lem2}(b), equation \eqref{eq:gener-yangV}
can be equivalently rewritten in terms of the following formula
for the commutator 
$[A(z),A(w)]
:=
(A(z)\otimes\id_V)(\id_V\otimes A(w))-(\id_V\otimes A(w))(A(z)\otimes\id_V)$:
\begin{equation}\label{eq:cl-adler}
\begin{array}{l}
\displaystyle{
\vphantom{\Big(}
[A(z),A(w)]
} \\
\displaystyle{
\vphantom{\Big(}
=
\frac{\alpha}{z-w}\Omega_V
\big(
A(w)\otimes A(z)
-
A(z)\otimes A(w)
\big)
} \\
\displaystyle{
\vphantom{\Big(}
-
\frac{\beta}{z\!+\!w\!+\!\gamma}
\big(
(\id_V\!\otimes\! A(w))
\Omega_V^\dagger
(A(z)\!\otimes\!\id_V)
-
(A(z)\!\otimes\!\id_V)
\Omega_V^\dagger
(\id_V\!\otimes\! A(w))
\big)
} \\
\displaystyle{
\vphantom{\Big(}
-
\frac{\epsilon\alpha\beta}{(z\!-\!w)(z\!+\!w\!+\!\gamma)}
\big(
(\id_V\!\otimes\! A(w))
\Omega_V^\dagger
(\id_V\!\otimes\! A(z))
-
(\id_V\!\otimes\! A(z))
\Omega_V^\dagger
(\id_V\!\otimes\! A(w))
\big)
,}
\end{array}
\end{equation}
where we used the identities (cf. \eqref{Omega} and \eqref{eq:omega+c})
$\Omega_V\Omega_V^\dagger=\Omega_V^\dagger\Omega_V=\epsilon\Omega_V^\dagger$.
\begin{remark}\label{20170704:rem1}
For $\gamma=0$, after rescaling $\alpha=\hbar\bar\alpha$ and $\beta=\hbar\bar\beta$,
we can take the classical limit $\hbar\to0$. 
The corresponding Poisson bracket
$\{\cdot\,,\,\cdot\}=\lim_{\hbar\to0}\frac1\hbar[\cdot\,,\,\cdot]$
satisfies:
$$
\begin{array}{l}
\displaystyle{
\vphantom{\Big(}
\{A(z),A(w)\}
} \\
\displaystyle{
\vphantom{\Big(}
=
\frac{\alpha}{z-w}\Omega_V
\big(
A(w)\otimes A(z)
-
A(z)\otimes A(w)
\big)
} \\
\displaystyle{
\vphantom{\Big(}
-
\frac{\beta}{z\!+\!w\!}
\big(
(\id_V\!\otimes\! A(w))
\Omega_V^\dagger
(A(z)\!\otimes\!\id_V)
-
(A(z)\!\otimes\!\id_V)
\Omega_V^\dagger
(\id_V\!\otimes\! A(w))
\big)
\,.}
\end{array}
$$
This equation is the ``finite'' analogue of the generalized Adler identity
introduced in \cite{DSKV18}.
\end{remark}
\begin{remark}\label{20170704:rem2}
In the special case $\alpha=1$, $\beta=\gamma=0$,
equation \eqref{eq:gener-yangV} coincides with the so called RTT presentation
of the Yangian of $\mf{gl}(V)$, cf. \cite{Mol07,DSKV17}. 
(In fact, Molev's presentation corresponds to the choice $\alpha=-1$, $\beta=\gamma=0$,
which we think is less natural.) 
Moreover,
in the special case $\alpha=\beta=\frac12$, $\gamma=0$,
equation \eqref{eq:gener-yangV} coincides with the so called RSRS presentation
of the extended twisted Yangian of $\mf g=\mf{so}(V)$ or $\mf{sp}(V)$, 
depending whether $\epsilon=+1$ or $-1$, cf. \cite{Mol07}. 
(In fact, Molev's presentation corresponds to the choice $\alpha=\beta=-1$, $\gamma=0$,
which we think is less natural.) 
Hence, if $A(z)\in R((z^{-1}))\otimes\End V$
satisfies the generalized $(\frac12,\frac12,0)$-Yangian identity 
we automatically have an algebra homomorphism from 
the extended twisted Yangian $X(\mf g)$ to the algebra $R$.
If, moreover, $A(z)$ satisfies the symmetry condition
(required in the definition of twisted Yangian in \cite{Mol07})
$$
A^\dagger(-z)-\epsilon A(z)
=
-\frac{A(z)-A(-z)}{4z}
\,,
$$
then we have an algebra homomorphism from 
the twisted Yangian $Y(\mf g)$ to the algebra $R$.
\end{remark}

%%%
\subsection{The generalized Yangian identity satisfied by the matrix $A(z)$}
\label{sec:5.3}

As in Section \ref{sec:4},
let $\mf g$ be a reductive Lie algebra,
let $\varphi:\,\mf g\to\End V$ be a faithful representation
on the $N$-dimensional vector space $V$,
and let $(\cdot\,|\,\cdot)$ be the associated trace form \eqref{20170317:eq1} of $\mf g$,
which we assume to be non-degenerate.
We denote 
\begin{equation}\label{20170704:eq4}
\Omega^{\mf g}_V
=
\sum_{i\in I}U_i\otimes U^i
\,
\in\End V\otimes\End V
\,,
\end{equation}
where, 
as in Section \ref{sec:4.1},
we let $\{u_i\},\,\{u^i\}$ be dual bases of $\mf g$,
and $\{U_i\}$, $\{U^i\}$
denote the corresponding images in $\End V$.
Note that, 
for $\mf g=\mf{gl}_N$ and $V=\mb F^N$,
we have $\Omega_V^{\mf g}=\Omega_V$,
for $\mf g=\mf{sl}_N$ and $V=\mb F^N$,
we have $\Omega_V^{\mf g}=\Omega_V-\frac1N\id_V\otimes\id_V$,
while for $\mf g=\mf{so}_N$ or $\mf{sp}_N$ and $V=\mb F^N$,
we have $\Omega_V^{\mf g}=\frac12(\Omega_V-\Omega_V^\dagger)$
\cite{DSKV18}.

Consider the operator
$A(z)=z\id_V+\sum_{i\in I}u_iU^i\in U(\mf g)[z]\otimes\End V$ (cf. \eqref{eq:A}).
\begin{lemma}\label{20170704:lem1}
\begin{enumerate}[(a)]
\item
$[A(z),A(w)]=\sum_{i\in I}u_i[\id_V\otimes U^i,\Omega_V^{\mf g}]$.
\item
$\frac{\Omega_V}{z-w}\big(A(w)\otimes A(z)-A(z)\otimes A(w)\big)
=
\sum_{i\in I}u_i[\id_V\otimes U^i,\Omega_V]$.
\item
$(\id_V\otimes A(w))
\frac{\Omega_V^\dagger}{z+w}
(A(z)\otimes\id_V)
-
(A(z)\otimes\id_V)
\frac{\Omega_V^\dagger}{z+w}
(\id_V\otimes A(w))
=
\sum_{i\in I}u_i[\id_V\otimes U^i,\Omega_V^\dagger]$.
\item
$(\id_V\otimes A(w))
\frac{\Omega_V^\dagger}{z-w}
(\id_V\otimes A(z))
-
(\id_V\otimes A(z))
\frac{\Omega_V^\dagger}{z-w}
(\id_V\otimes A(w))
=
\sum_{i\in I}u_i[\id_V\otimes U^i,\Omega_V^\dagger]$.
\end{enumerate}
\end{lemma}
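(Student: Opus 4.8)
The plan is to prove \ref{20170704:lem1}(a)--(d) uniformly, by substituting $A(z)=z\id_V+U$ and $A(w)=w\id_V+U$, with $U=\sum_{i\in I}u_iU^i$, into the left-hand sides and expanding, treating $U(\mf g)$ (and the formal variables $z,w$) as scalars commuting with $\End V\otimes\End V$ and multiplied in the order written. In each expansion the $zw$-term drops out; in (b), (c), (d) the term quadratic in $U$ cancels after relabelling the summation indices, and the part linear in $z,w$, once divided by $z\mp w$, reproduces the right-hand side; in (a) there are no surviving $z$- or $w$-terms at all and the quadratic-in-$U$ term \emph{is} the answer, after rewriting.

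For (a): expanding $[A(z),A(w)]=(A(z)\otimes\id_V)(\id_V\otimes A(w))-(\id_V\otimes A(w))(A(z)\otimes\id_V)$ leaves exactly $\sum_{i,j\in I}[u_i,u_j]\otimes U^i\otimes U^j$. Performing the inner summation over $i$ by the completeness identity $\sum_i[u_i,u_j]U^i=-\sum_k u_k[U^k,U_j]$ (this is the identity used in the proof of Lemma \ref{lem:key}, applied via antisymmetry $[u_i,u_j]=-[u_j,u_i]$; it follows from the duality of the bases $\{u_i\},\{u^i\}$ and invariance of the trace form) turns this into $-\sum_k u_k\otimes[U^k\otimes\id_V,\Omega^{\mf g}_V]$. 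Finally the diagonal $\mf g$-invariance of $\Omega^{\mf g}_V=\sum_i U_i\otimes U^i$, namely $[\varphi(a)\otimes\id_V+\id_V\otimes\varphi(a),\Omega^{\mf g}_V]=0$ for all $a\in\mf g$ (again a consequence of base duality and invariance of the trace form, using that $\varphi$ is a Lie algebra homomorphism), applied with $a=u^k$, gives $[U^k\otimes\id_V,\Omega^{\mf g}_V]=-[\id_V\otimes U^k,\Omega^{\mf g}_V]$, and (a) follows.

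For (b): $A(w)\otimes A(z)-A(z)\otimes A(w)$ has vanishing $zw$- and quadratic-in-$U$ parts and equals $(z-w)\sum_i u_i(U^i\otimes\id_V-\id_V\otimes U^i)$; multiplying by $\frac{\Omega_V}{z-w}$ and using Lemma \ref{20170322:lem1} in the form $\Omega_V(U^i\otimes\id_V)=(\id_V\otimes U^i)\Omega_V$ and $\Omega_V(\id_V\otimes U^i)=(U^i\otimes\id_V)\Omega_V$ yields $\sum_i u_i[\id_V\otimes U^i,\Omega_V]$. Parts (c) and (d) are the same game with $\Omega_V^\dagger$ in place of $\Omega_V$, so there $V$ carries the (symmetric or skewsymmetric) bilinear form and, in the cases of interest ($\mf g=\mf{so}_N$ or $\mf{sp}_N$, where $\varphi(\mf g)$ preserves the form), the $U^i=\varphi(u^i)$ are skew-adjoint, $(U^i)^\dagger=-U^i$. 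In (d) both $A$-factors sit in the same tensor slot, so the quadratic term cancels by bare relabelling $i\leftrightarrow j$ and the $z,w$-linear part is $(z-w)\sum_i u_i[\id_V\otimes U^i,\Omega_V^\dagger]$, with no use of $\dagger$-identities. In (c) one first collects the $z,w$-linear terms into $(z+w)\sum_i u_i[\id_V\otimes U^i,\Omega_V^\dagger]$ by pushing one factor across $\Omega_V^\dagger$ via Lemma \ref{20170322:lem2b}(b) and then invoking $(U^i)^\dagger=-U^i$.

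The routine bookkeeping aside, the step that needs care is the cancellation of the quadratic-in-$U$ term in (c): after relabelling it reads $\sum_{i,j}u_ju_i\big((\id_V\otimes U^j)\Omega_V^\dagger(U^i\otimes\id_V)-(U^j\otimes\id_V)\Omega_V^\dagger(\id_V\otimes U^i)\big)$, and I would show it equals its own negative by applying both identities of Lemma \ref{20170322:lem2b}(b) to move each $U$ to the opposite tensor slot, picking up the two adjoints $(U^j)^\dagger(U^i)^\dagger$, and then using $(U^i)^\dagger=-U^i$ to restore $U^jU^i$ with sign $(-1)^2=1$. This is where the restriction to the orthogonal/symplectic case (equivalently, $\varphi(\mf g)$ preserving the form) genuinely enters, so I would flag it explicitly at that point.
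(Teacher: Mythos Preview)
Your proof is correct and follows essentially the same line as the paper's. The only notable differences are: (i) for part (a) you pass through $[U^k\otimes\id_V,\Omega_V^{\mf g}]$ and then invoke the diagonal $\mf g$-invariance of $\Omega_V^{\mf g}$, whereas the paper reaches $[\id_V\otimes U^k,\Omega_V^{\mf g}]$ in one step by expanding $[u_i,u_j]$ on the dual basis via invariance of the trace form; and (ii) for part (d) you compute directly, observing that the quadratic-in-$U$ terms are literally identical in both summands so no $\dagger$-identity is needed, whereas the paper instead uses $\Omega_V^\dagger(\id_V\otimes A(z))=-\Omega_V^\dagger(A(-z)\otimes\id_V)$ (from Lemma \ref{20170322:lem2b}(b) and $A^\dagger(z)=-A(-z)$) to reduce (d) to (c) with $z\mapsto -z$. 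Your direct route for (d) is slightly cleaner. You are also more explicit than the paper about the vanishing of the quadratic-in-$U$ term in (c), which the paper suppresses; your argument that the difference equals its own negative via two applications of \eqref{eq:permut-taub} together with $(U^i)^\dagger=-U^i$ is exactly right, and your remark that this step genuinely requires $\varphi(\mf g)$ to preserve the bilinear form is on point.
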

\begin{proof}
We have
$$
[A(z),A(w)]
=
\sum_{i,j\in I}[u_i,u_j]U^i\otimes U^j
=
\sum_{i,k\in I}u_kU^i\otimes [U^k,U_i]
=
\sum_{k\in I}u_k[\id_V\otimes U^k,\Omega_V^{\mf g}]
\,,
$$
proving claim (a).
Next, we have, by \eqref{eq:permut}
$$
\begin{array}{l}
\displaystyle{
\vphantom{\big(}
\frac{\Omega_V}{z-w}\big(A(w)\otimes A(z)-A(z)\otimes A(w)\big)
=
\sum_{i\in I} u_i
\Omega_V
(U^i\otimes\id_V-\id_V\otimes U^i)
} \\
\displaystyle{
\vphantom{\big(}
=
\sum_{i\in I} u_i
[\id_V\otimes U^i,\Omega_V]
\,,}
\end{array}
$$
proving claim (b).
For claim (c) we have
$$
\begin{array}{l}
\displaystyle{
\vphantom{\big(}
(\id_V\otimes A(w))
\frac{\Omega_V^\dagger}{z+w}
(A(z)\otimes\id_V)
-
(A(z)\otimes\id_V)
\frac{\Omega_V^\dagger}{z+w}
(\id_V\otimes A(w))
} \\
\displaystyle{
\vphantom{\big(}
=
\frac1{z+w}
\sum_{i\in I}u_i
\Big(
w
\Omega_V^\dagger
(U^i\otimes\id_V)
+
z
(\id_V\otimes U^i)
\Omega_V^\dagger
} \\
\displaystyle{
\vphantom{\big(}
\,\,\,\,\,\,
-
z
\Omega_V^\dagger
(\id_V\otimes U^i)
-
w
(U^i\otimes\id_V)
\Omega_V^\dagger
\Big)
=
\sum_{i\in I}u_i
[\id_V\otimes U^i,\Omega_V^\dagger]
\,,}
\end{array}
$$
where, for the second equality, we used Lemma \ref{20170322:lem2b}(b)
and the fact that $(U^i)^\dagger=-U^i$.
Finally, we prove claim (d).
By Lemma \ref{20170322:lem2b}(b) and the obvious identity $A^\dagger(z)=-A(-z)$,
we have
$$
\begin{array}{l}
\displaystyle{
\vphantom{\big(}
(\id_V\otimes A(w))
\frac{\Omega_V^\dagger}{z-w}
(\id_V\otimes A(z))
-
(\id_V\otimes A(z))
\frac{\Omega_V^\dagger}{z-w}
(\id_V\otimes A(w))
} \\
\displaystyle{
\vphantom{\big(}
=
-
(\id_V\otimes A(w))
\frac{\Omega_V^\dagger}{z-w}
(A(-z)\otimes\id_V)
+
(A(-z)\otimes\id_V)
\frac{\Omega_V^\dagger}{z-w}
(\id_V\otimes A(w))
\,.}
\end{array}
$$
Hence, claim (d) follows by claim (c), replacing $z$ by $-z$.
\end{proof}
\begin{proposition}\label{20170704:prop}
Let $\mf g$ be one of the classical Lie algebras $\mf{gl}_N$, $\mf{sl}_N$, $\mf{so}_N$ or $\mf{sp}_N$,
and let $V=\mb F^N$ be its standard representation
(endowed, in the cases of $\mf{so}_N$ and $\mf{sp}_N$,
with a non-degenerate symmetric or skewsymmetric bilinear form, respectively).
Then, the operator
$A(z)=z\id_V+\sum_{i\in I}u_iU^i\in U(\mf g)[z]\otimes\End V$
satisfies the generalized Yangian identity \eqref{eq:gener-yangV},
where $\alpha,\beta,\gamma$ are given by the following table:
\begin{equation}\label{table}
\begin{tabular}{ll|lll}
\vphantom{\Big(}
$\mf g$ & $V$ & \,\,$\alpha$\,\, & \,\,$\beta$\,\, & \,\,$\gamma$\,\, \\
\hline 
\vphantom{\Big(}
$\mf{gl}_N$ or $\mf{sl}_N$ & $\mb F^N$ & $1$ & $0$ & $0$ \\
\vphantom{\Big(}
$\mf{so}_N$ or $\mf{sp}_N$ & $\mb F^N$ & $\frac12$ & $\frac12$ & $\frac\epsilon2$ \\
\end{tabular}
\end{equation}
\end{proposition}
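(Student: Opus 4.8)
The plan is to prove the generalized Yangian identity \eqref{eq:gener-yangV} in its equivalent commutator form \eqref{eq:cl-adler} (the equivalence being established in the discussion around \eqref{eq:cl-adler} in Section~\ref{sec:5.2}). By Lemma~\ref{20170704:lem1}(a) the left-hand side of \eqref{eq:cl-adler} is $[A(z),A(w)]=\sum_{i\in I}u_i[\id_V\otimes U^i,\Omega_V^{\mf g}]$, so the whole matter reduces to showing that, for the values of $(\alpha,\beta,\gamma)$ in the table \eqref{table}, the right-hand side of \eqref{eq:cl-adler} equals the same expression. Here I would use the explicit formulas for $\Omega_V^{\mf g}=\sum_{i\in I}U_i\otimes U^i$ recalled just before Lemma~\ref{20170704:lem1}: $\Omega_V^{\mf g}=\Omega_V$ for $\mf{gl}_N$, $\Omega_V^{\mf g}=\Omega_V-\frac1N\id_V\otimes\id_V$ for $\mf{sl}_N$, and $\Omega_V^{\mf g}=\frac12(\Omega_V-\Omega_V^\dagger)$ for $\mf{so}_N$ and $\mf{sp}_N$ (all from \cite{DSKV18}).

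For $\mf g=\mf{gl}_N$ and $\mf{sl}_N$, where $\beta=\gamma=0$ and $\alpha=1$, the right-hand side of \eqref{eq:cl-adler} collapses to $\frac1{z-w}\Omega_V\big(A(w)\otimes A(z)-A(z)\otimes A(w)\big)$, which by Lemma~\ref{20170704:lem1}(b) is $\sum_{i\in I}u_i[\id_V\otimes U^i,\Omega_V]$. Since $\id_V\otimes U^i$ commutes with $\id_V\otimes\id_V$, we have $[\id_V\otimes U^i,\Omega_V^{\mf g}]=[\id_V\otimes U^i,\Omega_V]$ in both cases, so these cases are immediate; in particular the trace term distinguishing $\mf{sl}_N$ from $\mf{gl}_N$ does not matter. (This also makes explicit that here \eqref{eq:gener-yangV} is nothing but the ordinary Yangian identity \eqref{vic:1.4}.)

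The substantive case is $\mf g=\mf{so}_N$ and $\mf{sp}_N$, with $(\alpha,\beta,\gamma)=\big(\frac12,\frac12,\frac\epsilon2\big)$, where all three terms of \eqref{eq:cl-adler} contribute and the nonzero shift $\gamma=\frac\epsilon2$ must be tracked. The first term gives $\frac12\sum_{i\in I}u_i[\id_V\otimes U^i,\Omega_V]$ by Lemma~\ref{20170704:lem1}(b). For the remaining two terms, I would use Lemma~\ref{20170704:lem1}(c),(d): the brackets appearing there, multiplied by $z+w$ and by $z-w$ respectively, equal $(z+w)\sum_{i\in I}u_i[\id_V\otimes U^i,\Omega_V^\dagger]$ and $(z-w)\sum_{i\in I}u_i[\id_V\otimes U^i,\Omega_V^\dagger]$ (the point being that, when $A(z)=z\id_V+U$ is expanded, the $z,w$-independent contributions cancel because $U^\dagger=-U$). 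Hence, after dividing by $z+w+\frac\epsilon2$ and by $(z-w)\big(z+w+\frac\epsilon2\big)$ and inserting the coefficients $-\beta=-\frac12$ and $-\epsilon\alpha\beta=-\frac\epsilon4$, the total $\Omega_V^\dagger$-contribution to the right-hand side of \eqref{eq:cl-adler} is
$$
-\,\frac{\frac12(z+w)+\frac\epsilon4}{z+w+\frac\epsilon2}\sum_{i\in I}u_i[\id_V\otimes U^i,\Omega_V^\dagger]
=
-\,\frac12\sum_{i\in I}u_i[\id_V\otimes U^i,\Omega_V^\dagger]
\,,
$$
using the identity $\frac12(z+w)+\frac\epsilon4=\frac12\big(z+w+\frac\epsilon2\big)$. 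Adding the $\Omega_V$-contribution, the right-hand side becomes $\sum_{i\in I}u_i[\id_V\otimes U^i,\frac12(\Omega_V-\Omega_V^\dagger)]=\sum_{i\in I}u_i[\id_V\otimes U^i,\Omega_V^{\mf g}]$, matching the left-hand side.

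The main obstacle is this last cancellation in the orthogonal and symplectic cases: one has to recognize that $\gamma=\frac\epsilon2$ is precisely the constant making the $\Omega_V^\dagger$-parts of the second and third terms of \eqref{eq:cl-adler} collapse to $-\frac12\sum_{i\in I}u_i[\id_V\otimes U^i,\Omega_V^\dagger]$. This rests on two facts: the vanishing of the $z,w$-independent part in the expansion of the second term (via $U^\dagger=-U$ and Lemma~\ref{20170322:lem2b}(b)), and the scalar identity $\frac12(z+w)+\frac\epsilon4=\frac12(z+w+\frac\epsilon2)$. Once these are in hand, everything else is bookkeeping with Lemma~\ref{20170704:lem1} and the known formulas for $\Omega_V^{\mf g}$.
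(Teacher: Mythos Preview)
Your proposal is correct and follows essentially the same route as the paper: both reduce \eqref{eq:gener-yangV} to its commutator form \eqref{eq:cl-adler}, compute the left-hand side via Lemma~\ref{20170704:lem1}(a), and evaluate the three terms on the right-hand side via parts (b), (c), (d) of the same lemma together with the known formulas for $\Omega_V^{\mf g}$. The only cosmetic difference is that the paper keeps $\alpha,\beta,\gamma$ general and arrives at the closed expression $\sum_i u_i\big[\id_V\otimes U^i,\ \alpha\Omega_V-\big(1+\tfrac{\epsilon\alpha-\gamma}{z+w+\gamma}\big)\beta\Omega_V^\dagger\big]$ before specializing, whereas you substitute the table values from the start; the cancellation $\tfrac12(z+w)+\tfrac\epsilon4=\tfrac12(z+w+\tfrac\epsilon2)$ you single out is exactly the condition $\epsilon\alpha-\gamma=0$ that makes the paper's parenthesis collapse to $1$.
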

\begin{proof}
Recall that the generalized Yangian identity \eqref{eq:gener-yangV}
is equivalent to equation \eqref{eq:cl-adler}.
By Lemma \ref{20170704:lem1}(a),
the LHS of equation \eqref{eq:cl-adler} is 
$$
\sum_{i\in I}u_i
[\id_V\otimes U^i,\Omega_V^{\mf g}]
\,,
$$
while, 
by Lemma \ref{20170704:lem1}(b), (c) and (d),
the RHS of equation \eqref{eq:cl-adler} is
$$
\sum_{i\in I}u_i
\big[\id_V\otimes U^i,
\alpha\Omega_V
-\big(1+\frac{\epsilon\alpha-\gamma}{z+w+\gamma}\big)\beta\Omega_V^\dagger\big]
\,.
$$
The claim follows.
\end{proof}
\begin{remark}\label{20170704:rem3}
We can rescale the values of $\alpha$ and $\beta$ and let $\gamma=0$
at the price of applying an affine transformation $z\mapsto az+b$ 
(cf. Proposition \ref{20170704:prop2}(a)).
For example, in \cite[Sec.2.2]{Mol07} they consider the operator $S(z)$
satisfying the $(-1,-1,0)$-Yangian (=twisted Yangian) identity,
which should thus correspond to $A(-\frac{z}2-\frac{\epsilon}4)$.
(But we think that this choice is less natural.)
\end{remark}

%%%
\subsection{The generalized Yangian identity and generalized quasideterminants}
\label{sec:5.4}

\begin{proposition}\label{20170704:prop2}
\begin{enumerate}[(a)]
\item
If $A(z)\in R((z^{-1}))\otimes\Hom(W,U)$ 
satisfies the $(\alpha,\beta,\gamma)$-Yangian identity \eqref{eq:gener-yang},
then $A(az+b)$ satisfies the $(\frac{\alpha}{a},\frac{\beta}{a},\frac{\gamma+2b}{a})$-Yangian identity,
where $a,b\in\mb F$, $a\neq0$.
\item
Suppose that $A(z)\in R((z^{-1}))\otimes\End(V)$ 
satisfies the $(\alpha,\beta,\gamma)$-Yangian identity \eqref{eq:gener-yangV}.
Let $\Psi:\,U\hookrightarrow V$ and $\Pi:\,V\twoheadrightarrow W$
be linear maps satisfying, for $\beta\neq0$, condition \eqref{20170704:eq2}.
Then $\Pi A(z)\Psi\in R((z^{-1}))\otimes\Hom(U,W)$
satisfies the $(\alpha,\beta,\gamma)$-Yangian identity \eqref{eq:gener-yang}
(with $U$ and $W$ exchanged).
\item
Suppose that $A(z)\in R((z^{-1}))\otimes\Hom(U,W)$ 
satisfies the $(\alpha,\beta,\gamma)$-Yangian identity \eqref{eq:gener-yang}
(with $U$ and $W$ exchanged),
and assume that the inverse $A^{-1}(z)$ exists in $R((z^{-1}))\otimes\Hom(W,U)$.
Then $A^{-1}(z)$ satisfies the $(-\alpha,-\beta,\gamma-\beta\dim U)$-Yangian identity.
\item
Suppose that $A(z)\in R((z^{-1}))\otimes\End(V)$ 
satisfies the $(\alpha,\beta,\gamma)$-Yangian identity \eqref{eq:gener-yangV}.
Let $\Psi:\,U\hookrightarrow V$ and $\Pi:\,V\twoheadrightarrow W$
be linear maps satisfying, for $\beta\neq0$, condition \eqref{20170704:eq2}.
Assume that the quasideterminant 
$|A(z)|_{\Psi,\Pi}\in R((z^{-1}))\otimes\Hom(W,U)$ (defined by \eqref{eq:linalg7})
exists.
Then, $|A(az+b)|_{\Psi,\Pi}$
satisfies the $(\frac{\alpha}{a},\frac{\beta}{a},\frac{\gamma-\beta(\dim V-\dim U)+2b}{a})$-Yangian identity.
\end{enumerate}
\end{proposition}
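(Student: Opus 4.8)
\emph{Strategy.} My plan is to prove the four parts in order, treating (a) as an elementary rescaling, (b) and (c) as the two structural lemmas, and (d) as a fourfold application of (a), (c), (b), (c) with careful tracking of the constants. First, for (a) I would substitute $z\mapsto az+b$, $w\mapsto aw+b$ in \eqref{eq:gener-yang}: each of the four ``$R$-matrix'' factors rescales, $z-w+\alpha\Omega_U\mapsto a\big(z-w+\tfrac{\alpha}{a}\Omega_U\big)$ and $z+w+\gamma-\beta\Omega_{W,U}^\dagger\mapsto a\big(z+w+\tfrac{\gamma+2b}{a}-\tfrac{\beta}{a}\Omega_{W,U}^\dagger\big)$ (and likewise with $U,W$ exchanged), while $A(az+b)$ and $A(aw+b)$ are untouched, so after cancelling the common scalar $a^2$ one reads off the $(\tfrac{\alpha}{a},\tfrac{\beta}{a},\tfrac{\gamma+2b}{a})$-Yangian identity for $A(az+b)$.

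For (b) I would set $\tilde A(z):=\Pi A(z)\Psi$ and squeeze \eqref{eq:gener-yangV} between $\Pi\otimes\Pi$ on the left and $\Psi\otimes\Psi$ on the right. The scalar factors $z\pm w+(\cdots)$ pass through. The $\alpha$-factor turns into $z-w+\alpha\Omega_W$ on the far left and $z-w+\alpha\Omega_U$ on the far right by $\Omega_W(\Pi\otimes\Pi)=(\Pi\otimes\Pi)\Omega_V$ and $\Omega_V(\Psi\otimes\Psi)=(\Psi\otimes\Psi)\Omega_U$ (Lemma \ref{20170322:lem1}). For the $\beta$-factor I would first move $\Omega_V^\dagger$ next to a single $\Psi$ and a single $\Pi$ using Lemma \ref{20170322:lem2b}(b) on the neighbouring copy of $A$, and then invoke Lemma \ref{20170322:lem6} — this is where hypothesis \eqref{20170704:eq2} and the induced pairing \eqref{20170704:eq3} come in — to rewrite $\Omega_V^\dagger$ flanked by $\Psi,\Pi$ as $\Omega_{U,W}^\dagger$ resp. $\Omega_{W,U}^\dagger$. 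Comparing $R$-components gives $(\Pi\otimes\Pi)(A(z)\otimes\id_V)(\id_V\otimes A(w))(\Psi\otimes\Psi)=(\tilde A(z)\otimes\id_W)(\id_U\otimes\tilde A(w))$ and similarly for the reversed product, so \eqref{eq:gener-yangV} becomes the exchanged \eqref{eq:gener-yang} for $\tilde A$ with the same $(\alpha,\beta,\gamma)$.

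Part (c), the inversion, is the step I expect to be the main obstacle. The idea is to take the inverse of the exchanged \eqref{eq:gener-yang} for $A(z)\in\Hom(U,W)$: inverting reverses the order of the four factors and of the two copies of $A$, replacing $A$ by $A^{-1}$. The point is that each factor is invertible and, up to an explicit scalar rational function of $z,w$, its inverse is a factor of the same shape with transformed constants: from $\Omega^2=\id$ one has $(z-w+\alpha\Omega)^{-1}=\frac{1}{(z-w)^2-\alpha^2}(z-w-\alpha\Omega)$, and from the identity $(\Omega^\dagger)^2=(\dim U)\,\Omega^\dagger$ (a consequence of the completeness relations \eqref{eq:completeness}) one has $(z+w+\gamma-\beta\Omega^\dagger)^{-1}=\frac{1}{(z+w+\gamma)(z+w+\gamma-\beta\dim U)}\big(z+w+(\gamma-\beta\dim U)+\beta\Omega^\dagger\big)$. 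After pulling out these scalar prefactors — which are the same on the two sides and cancel — the resulting equation is exactly \eqref{eq:gener-yang} for $A^{-1}(z)$ with $(\alpha,\beta,\gamma)$ replaced by $(-\alpha,-\beta,\gamma-\beta\dim U)$. The delicate point is keeping straight which pair of spaces each $\Omega$ and $\Omega^\dagger$ lives on once the factors are reversed, and pinning down the precise $\gamma$-shift; I would sanity-check the outcome against Proposition \ref{20170704:prop} and the presentations recalled in Remark \ref{20170704:rem2}.

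Finally, for (d) I would chain these: starting from $A(z)\in R((z^{-1}))\otimes\End V$ satisfying \eqref{eq:gener-yangV} with $(\alpha,\beta,\gamma)$, part (a) gives that $A(az+b)$ satisfies it with $(\tfrac{\alpha}{a},\tfrac{\beta}{a},\tfrac{\gamma+2b}{a})$; reading this as the self-exchanged \eqref{eq:gener-yang} with $U=W=V$ and applying (c) — legitimate since, by the definition \eqref{eq:linalg7}, the assumed existence of $|A(z)|_{\Psi,\Pi}$ already includes invertibility of $A(z)$ — gives that $A(az+b)^{-1}$ satisfies \eqref{eq:gener-yangV} with $\big(-\tfrac{\alpha}{a},-\tfrac{\beta}{a},\tfrac{\gamma+2b}{a}-\tfrac{\beta}{a}\dim V\big)$; part (b) (using \eqref{20170704:eq2} when $\beta\neq0$) then gives that $\Pi A(az+b)^{-1}\Psi\in R((z^{-1}))\otimes\Hom(U,W)$ satisfies the exchanged \eqref{eq:gener-yang} with the same triple; and since this operator is invertible with inverse $|A(az+b)|_{\Psi,\Pi}$ (again by the assumed existence of the quasideterminant), a second application of (c) shows that $|A(az+b)|_{\Psi,\Pi}$ satisfies the $\big(\tfrac{\alpha}{a},\tfrac{\beta}{a},\tfrac{\gamma+2b}{a}-\tfrac{\beta}{a}\dim V+\tfrac{\beta}{a}\dim U\big)$-Yangian identity, which is the $\big(\tfrac{\alpha}{a},\tfrac{\beta}{a},\tfrac{\gamma-\beta(\dim V-\dim U)+2b}{a}\big)$-Yangian identity of the statement.
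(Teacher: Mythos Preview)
Your proposal is correct and follows essentially the same approach as the paper: (a) by direct substitution, (b) by sandwiching with $\Pi\otimes\Pi$ and $\Psi\otimes\Psi$ and invoking Lemmas~\ref{20170322:lem1} and~\ref{20170322:lem6}, (c) by inverting both sides using the explicit inverses coming from $\Omega^2=\id$ and $(\Omega^\dagger)^2=(\dim U)\Omega^\dagger$, and (d) by chaining the previous parts. One minor remark on (b): you do not actually need Lemma~\ref{20170322:lem2b}(b) to move $\Omega_V^\dagger$ past a copy of $A$; once you factor $(\Pi\otimes\Pi)(A(z)\otimes\id_V)=(\Pi A(z)\otimes\id_W)(\id_V\otimes\Pi)$ and $(\id_V\otimes A(w))(\Psi\otimes\Psi)=(\Psi\otimes\id_V)(\id_U\otimes A(w)\Psi)$, the factor $(\id_V\otimes\Pi)(z+w+\gamma-\beta\Omega_V^\dagger)(\Psi\otimes\id_V)$ is already in the exact form to which Lemma~\ref{20170322:lem6} applies directly.
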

\begin{proof}
Claim (a) is obtained by replacing $z$ by $az+b$ and $w$ by $aw+b$ in equation \eqref{eq:gener-yang}.
For claim (b),
let us compose the LHS of equation \eqref{eq:gener-yangV}
on the left by $\Pi\otimes\Pi$ and on the right by $\Psi\otimes\Psi$.
As a result we get, by Lemma \ref{20170322:lem1} 
and the second equation in Lemma \ref{20170322:lem6},
\begin{equation}\label{20170704:eq5}
\begin{array}{l}
\displaystyle{
\vphantom{\Big(}
(\Pi\otimes\Pi)(z-w+\alpha\Omega_V)
(A(z)\otimes\id_V)
(z+w+\gamma-\beta\Omega_V^\dagger)
(\id_V\otimes A(w))
(\Psi\otimes\Psi)
} \\
\displaystyle{
\vphantom{\Big(}
=
(z-w+\alpha\Omega_W)
(\Pi\otimes\Pi)
(A(z)\otimes\id_V)
(z+w+\gamma-\beta\Omega_V^\dagger)
(\id_V\otimes A(w))
(\Psi\otimes\Psi)
} \\
\displaystyle{
\vphantom{\Big(}
=
(z\!-\!w\!+\!\alpha\Omega_W)
(\Pi A(z)\!\otimes\!\id_W)(\id_V\!\otimes\!\Pi)
(z\!+\!w\!+\!\gamma\!-\!\beta\Omega_V^\dagger)
(\Psi\!\otimes\!\id_V)
(\id_U\!\otimes\! A(w)\Psi)
} \\
\displaystyle{
\vphantom{\Big(}
=
(z\!-\!w\!+\!\alpha\Omega_W)
(\Pi A(z)\!\otimes\!\id_W\!)
(\Psi\!\otimes\!\id_W\!)
(z\!+\!w\!+\!\gamma\!-\!\beta\Omega_{U,W}^\dagger)
(\id_U\!\otimes\!\Pi)
(\id_U\!\otimes\! A(w)\Psi)
} \\
\displaystyle{
\vphantom{\Big(}
=
(z-w+\alpha\Omega_W)
(\Pi A(z)\Psi\otimes\id_W)
(z+w+\gamma-\beta\Omega_{U,W}^\dagger)
(\id_U\otimes \Pi A(w)\Psi)
\,,}
\end{array}
\end{equation}
which is the LHS of the $(\alpha,\beta,\gamma)$-Yangian identity \eqref{eq:gener-yang}
(with $U$ and $W$ exchanged)
for $\Pi A(z)\Psi$.
Similarly for the RHS.
This proves claim (b).
Next, let us prove claim (c).
Since $\Omega_V^2=1$, we have
$$
(z-w+\alpha\Omega_V)^{-1}
=
\frac{z-w-\alpha\Omega_V}{(z-w)^2-\alpha^2}
\,.
$$
Moreover, since  
$(\Omega_{U,W}^\dagger)^2=(\dim U)\Omega_{U,W}^\dagger$
and $(\Omega_{W,U}^\dagger)^2=(\dim U)\Omega_{W,U}^\dagger$
(cf. \eqref{eq:omega+}--\eqref{eq:omega+b}),
we have
$$
(z+w+\gamma-\beta\Omega_{U,W}^\dagger)^{-1}
=
\frac{z+w+\gamma-\beta\dim U+\beta\Omega_{U,W}^\dagger}{(z+w+\gamma)(z+w+\gamma-\beta\dim U)}
\,,
$$
and similarly
$$
(z+w+\gamma-\beta\Omega_{W,U}^\dagger)^{-1}
=
\frac{z+w+\gamma-\beta\dim U+\beta\Omega_{W,U}^\dagger}{(z+w+\gamma)(z+w+\gamma-\beta\dim U)}
\,.
$$
Claim (c) then follows by taking the inverse of both sides of equation \eqref{eq:gener-yang}.
Finally, claim (d) is a consequence of (a), (b) and (c).
\end{proof}

%%%
\subsection{The second Main Theorem}
\label{sec:5.5}

\begin{theorem}\label{thm:main2}
Let $\mf g$ be one of the classical Lie algebras $\mf{gl}_N$, $\mf{sl}_N$, $\mf{so}_N$ or $\mf{sp}_N$,
and let $V=\mb F^N$ be its standard representation (endowed, in the cases of $\mf{so}_N$ and $\mf{sp}_N$,
with a non-degenerate symmetric or skewsymmetric bilinear form, respectively,
and we let $\epsilon=+1$ or $-1$ respectively).
Given an $\mf{sl}_2$-triple $(f,h=2x,e)$ in $\mf g$,
consider the quantum finite $W$-algebra $W(\mf g,f)$ 
and the operator $L(z)\in W(\mf g,f)((z^{-1}))\otimes\Hom\big(V\big[-\frac d2\big],V\big[\frac d2\big]\big)$
defined by \eqref{eq:tildeL}-\eqref{eq:L}
(cf. Theorem \ref{thm:main1}).
Then, $L(z)$ satisfies the generalized Yangian identity \eqref{eq:gener-yang}
with the values of $\alpha,\beta,\gamma$ as in the following Table:
\begin{equation}\label{table2}
\begin{tabular}{ll|lll}
\vphantom{\Big(}
$\mf g$ & $V$ & \,\,$\alpha$\,\, & \,\,$\beta$\,\, & \,\,$\gamma$\,\, \\
\hline 
\vphantom{\Big(}
$\mf{gl}_N$ or $\mf{sl}_N$ & $\mb F^N$ & $1$ & $0$ & $0$ \\
\vphantom{\Big(}
$\mf{so}_N$ or $\mf{sp}_N$ & $\mb F^N$ & $\frac12$ & $\frac12$ & $\frac{\epsilon-\dim V+\dim V[\frac d2]}2$ \\
\end{tabular}
\end{equation}
\end{theorem}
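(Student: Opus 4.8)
The plan is to recognize $L(z)$ as a generalized quasideterminant of the ``universal'' matrix $A(z)=z\id_V+\sum_{i\in I}u_iU^i$, for which a generalized Yangian identity is already available from Proposition \ref{20170704:prop}, and then to transport that identity through the quasideterminant construction of Section \ref{sec:5.4} and down to the $W$-algebra via the cyclic vector $\bar1$.

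The first step is to record that, for $\mf g=\mf{gl}_N,\mf{sl}_N,\mf{so}_N,\mf{sp}_N$ with $V=\mb F^N$, the operator $A(z)\in U(\mf g)[z]\otimes\End V$ satisfies the generalized $(\alpha_0,\beta_0,\gamma_0)$-Yangian identity \eqref{eq:gener-yangV} over $R=U(\mf g)$, with $(\alpha_0,\beta_0,\gamma_0)$ as in Table \eqref{table}: namely $(1,0,0)$ for $\mf{gl}_N,\mf{sl}_N$ and $(\frac12,\frac12,\frac\epsilon2)$ for $\mf{so}_N,\mf{sp}_N$ (Proposition \ref{20170704:prop}). Next, I would identify $L(z)$ with the $(\Psi_{\frac d2},\Pi_{-\frac d2})$-quasideterminant of $A(z)$ applied to $\bar1$. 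Indeed, by Lemma \ref{lem:X} we have $\id_V+z^{-\Delta}U=z^{-1-X}A(z)z^X$, and using $\Pi_{-\frac d2}z^{-X}=z^{\frac d2}\Pi_{-\frac d2}$ and $z^{1+X}\Psi_{\frac d2}=z^{1+\frac d2}\Psi_{\frac d2}$ one gets $|\id_V+z^{-\Delta}U|_{\Psi_{\frac d2},\Pi_{-\frac d2}}=z^{-1-d}|A(z)|_{\Psi_{\frac d2},\Pi_{-\frac d2}}$; combining this with the Main Lemma \ref{lem:main} and the computation \eqref{0412:eq8b} yields
\[
L(z)=\widetilde L(z)\bar1=|A(z)|_{\Psi_{\frac d2},\Pi_{-\frac d2}}\bar1\,,
\]
where the quasideterminant on the right is to be read, up to the scalar $z^{1+d}$, in the completed Rees algebra $\mc R_\infty U(\mf g)$ of Section \ref{sec:ore}, in which it exists by Proposition \ref{0410:prop4}.

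In the $\mf{so}_N$ and $\mf{sp}_N$ cases (where $\beta_0\neq0$) one then has to verify the compatibility condition \eqref{20170704:eq2} for the pair $\Psi=\Psi_{\frac d2}$, $\Pi=\Pi_{-\frac d2}$, i.e. $V[\frac d2]=(V[>-\frac d2])^\perp$: since $x\in\mf g$ and the bilinear form on $V$ is $\mf g$-invariant, $X=\varphi(x)$ is skew-adjoint, so $\langle V[j]\,|\,V[k]\rangle=0$ unless $j+k=0$, and because $\frac d2$ is the largest $X$-eigenvalue this forces $(V[>-\frac d2])^\perp=V[\frac d2]$. With this in hand, Proposition \ref{20170704:prop2}(d) applied with $a=1$, $b=0$, $U=V[\frac d2]$ (so $\dim U=\dim V[\frac d2]$) and $W=V[-\frac d2]$ shows that $|A(z)|_{\Psi_{\frac d2},\Pi_{-\frac d2}}$ — hence also its scalar multiple $L(z)$, scalar multiplication by a function of $z$ not affecting $(\alpha,\beta,\gamma)$ — satisfies the generalized $(\alpha_0,\beta_0,\gamma_0-\beta_0(\dim V-\dim V[\frac d2]))$-Yangian identity \eqref{eq:gener-yang}. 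To land in $W(\mf g,f)$ rather than in $\mc R_\infty U(\mf g)$: by \eqref{20170626:eq6} the entries of this quasideterminant lie in (the Rees analogue of) $\widetilde W$, so products of them applied to $\bar1$ realize the associative product of $W(\mf g,f)$ (Lemma \ref{0225:prop1a}, Proposition \ref{0413:prop1}), and the identity descends verbatim to $L(z)\in W(\mf g,f)((z^{-1}))\otimes\Hom\big(V[-\frac d2],V[\frac d2]\big)$. Substituting $(\alpha_0,\beta_0,\gamma_0)$ from Table \eqref{table} gives $(1,0,0)$ for $\mf{gl}_N,\mf{sl}_N$ and $(\frac12,\frac12,\frac{\epsilon-\dim V+\dim V[\frac d2]}2)$ for $\mf{so}_N,\mf{sp}_N$, which is precisely Table \eqref{table2}.

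The main obstacle is the bookkeeping behind the identity $L(z)=|A(z)|_{\Psi_{\frac d2},\Pi_{-\frac d2}}\bar1$ and the invocation of Proposition \ref{20170704:prop2}(d): the quasideterminant $|A(z)|_{\Psi_{\frac d2},\Pi_{-\frac d2}}$ does \emph{not} exist over $U(\mf g)((z^{-1}))$, since the leading coefficient of $\Pi_{-\frac d2}A(z)^{-1}\Psi_{\frac d2}$ is a non-invertible element of $U(\mf g)$; thus the chain of inversions and compressions in the proof of Proposition \ref{20170704:prop2}(d) must be carried out after localizing to $\mc R_\infty U(\mf g)$ — exactly the algebra in which Proposition \ref{0410:prop4} guarantees the needed inverses exist — while tracking the scalar factor $z^{1+d}$ and the reduction modulo $\mc RJ$ throughout. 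Once this setup is in place, the only genuinely new verification is the orthogonality condition \eqref{20170704:eq2} of the preceding paragraph; everything else is the arithmetic of the $\gamma$-shift.
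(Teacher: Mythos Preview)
Your approach is essentially the same as the paper's. The paper also starts from Proposition~\ref{20170704:prop} for $A(z)$, passes to the quasideterminant via Proposition~\ref{20170704:prop2}(d), and descends to $W(\mf g,f)$ through $\bar1$ using Lemma~\ref{lem:main}. The only cosmetic difference is that the paper phrases the intermediate step as ``$\id_V+z^{-\Delta}U$ satisfies the same Yangian identity because the identity is graded'' and then takes its quasideterminant directly in $\mc R_\infty U(\mf g)$, whereas you phrase it as ``$|A(z)|_{\Psi_{\frac d2},\Pi_{-\frac d2}}$, read through the Rees localization''; these amount to the same manoeuvre up to the scalar $z^{1+d}$, exactly as you note, and the paper then multiplies by $z^{d+1}w^{d+1}$ at the end just as you absorb that scalar. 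Your explicit verification of the orthogonality condition \eqref{20170704:eq2} for $\Psi_{\frac d2},\Pi_{-\frac d2}$ via the skew-adjointness of $X$ is a point the paper leaves implicit.
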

\begin{proof}
By Proposition \ref{20170704:prop},
$A(z)=z\id_V+\sum_{i\in I}u_iU^i\in U(\mf g)[z]\otimes\End V$
satisfies the generalized Yangian identity \eqref{eq:gener-yangV}
with $\alpha,\beta,\gamma$ as in Table \eqref{table}.
It then follows by Lemma \ref{lem:X} and the fact that the Yangian identity is graded,
that also $\id_V+z^{-\Delta}U\in\mc RU(\mf g)\otimes\End V$ 
satisfies the generalized Yangian identity with the same values of $\alpha,\beta,\gamma$.
Hence, by Proposition \ref{0410:prop4} and Proposition \ref{20170704:prop2}(d),
$|\id_V+z^{-\Delta}U|_{\Psi_{\frac d2},\Pi_{-\frac d2}}
\in\mc R_{\infty}U(\mf g)\otimes\Hom(V[-\frac d2],V[\frac d2])$  
satisfies the generalized Yangian identity \eqref{eq:gener-yang}
with $\alpha,\beta,\gamma$ as in Table \eqref{table2}.
Recall that the associative product of the $W$-algebra $W(\mf g,f)$
is induced by the product of $U(\mf g)\subset\mc R_\infty U(\mf  g)$.
Recall also that, by Lemma \ref{lem:main},
$L(z)=z^{d+1}|\id_V+z^{-\Delta}U|_{\Psi_{\frac d2},\Pi_{-\frac d2}}\bar 1$.
Hence, applying both sides of the generalized Yangian identity 
for $|\id_V+z^{-\Delta}U|_{\Psi_{\frac d2},\Pi_{-\frac d2}}$ to $\bar 1$,
and multiplying them by $z^{d+1}w^{d+1}$.
we get the generalized Yangian identity
(with the same values of $\alpha,\beta$ and $\gamma$)
for $L(z)\in W(\mf g,f)((z^{-1}))\otimes\Hom(V[-\frac d2],V[\frac d2])$.
\end{proof}
\begin{remark}\label{20170705:rem1}
Recall that, by Remark \ref{20170704:rem2},
the Yangian $Y(\mf{gl}_N)$
is defined by the generalized Yangian identity for $\alpha=1,\beta=\gamma=0$ \cite{Dr86}.
We then automatically have by Theorem \ref{thm:main2}
an algebra homomorphism
$Y(\mf{gl}_n)\to W(\mf{gl}_N,f)$,
where $N=\dim V$ and $n=\dim(V[\frac d2])$,
mapping $T(z)\mapsto L(z)$.
(For this, we need to fix a linear isomorphism $V[-\frac d2]\simeq V[\frac d2]$.)

Next, let $\mf g=\mf{so}_N$ or $\mf{sp}_N$
and $\epsilon=\pm1$ ($+1$ for $\mf{so}_N$ and $-1$ for $\mf{sp}_N$).
Fix also an isomorphism $V[-\frac d2]\simeq V[\frac d2]$,
so that the pairing between them induces a non-degenerate symmetric or skewsymmetric
bilinear form on $V[\frac d2]$,
and let $\bar{\mf g}=\mf{so}_n$ or $\mf{sp}_n$
according to the parity of this bilinear form
(as before, $n=\dim(V[\frac d2])$).
By Remark \ref{20170704:rem2},
the extended twisted Yangian  $X(\bar{\mf g})$ \cite{Mol07}
is defined by the generalized Yangian identity for $\alpha=\beta=\frac12,\gamma=0$.
Hence, by Theorem \ref{thm:main2} and Proposition \ref{20170704:prop2}(a),
we get an algebra homomorphism
$X(\bar{\mf g})\to W(\mf g,f)$,
mapping 
$$
S(z)\mapsto L\big(z+\frac{N-n-\epsilon}{4}\big)
\,.
$$
(Or, for the less natural choice $\alpha=\beta=-1,\gamma=0$ of \cite{Mol07},
$S(z)\mapsto L\big(-\frac z2+\frac{N-n-\epsilon}{4}\big)$.)
\end{remark}
\begin{remark}\label{20170705:rem2}
For $\mf g=\mf{so}_N$ or $\mf{sp}_N$,
the operator $A(z)=z\id_V+\sum_{i\in I}u_iU^i\in U(\mf g)[z]\otimes\End V$
satisfies the skewadjointness condition $A^\dagger(-z)=-A(z)$.
Hence, by Lemma \ref{lem:main} and Lemma \ref{20170411:lem2}
the operator $L(z)\in W(\mf g,f)((z^{-1}))\otimes\Hom(V[-\frac d2],V[\frac d2])$
satisfies the skewadjointness condition
$$
L^\dagger(-z)=-\epsilon L(z)
\,.
$$
The adjointness property of the ``shifted'' operator 
$L\big(z+\frac{N-n-\epsilon}{4}\big)$ is more complicated,
which is reflecting the (more complicated) adjointness property of the operator $S(z)$
defining the twisted Yangian of $\mf{so}_N$ or $\mf{sp}_N$ \cite{Mol07}.
\end{remark}
\begin{remark}\label{20170705:rem3}
Let $p=(p_1^{r_1},\dots,p_s^{r_s})$ be a partition of $N$,
with $p_1>p_2>\dots>p_s$,
and let $r=r_1+\dots+r_s$.
Consider the finite $W$-algebra $W(\mf{gl}_N,f)$, where $f$
is a nilpotent element of $\mf{gl}_N$ associated to the partition $p$.
In \cite{BK06}, Brundan and Kleshchev define a surjective homomorphism 
\begin{equation}\label{20170705:eq1}
\kappa\,:\,\,Y(\mf{gl}_r,\sigma)\,\to\,W(\mf{gl}_N,f)
\,.
\end{equation}
from the shifted Yangian $Y(\mf{gl}_r,\sigma)$ to $W(\mf{gl}_N,f)$.
Recall from \cite{BK06} that the shifted Yangian $Y(\mf{gl}_r,\sigma)$
is generated by the coefficients of the entries of matrices
$D_i(z)\in\Mat_{r_i\times r_i}Y(\mf{gl}_r,\sigma)[[z^{-1}]]$, $i=1,\dots,s$,
$E_i(z)\in\Mat_{r_{i+1}\times r_i}Y(\mf{gl}_r,\sigma)[[z^{-1}]]$, $i=1,\dots,s-1$,
and $F_i(z)\in\Mat_{r_{i}\times r_{i+1}}Y(\mf{gl}_r,\sigma)[[z^{-1}]]$, $i=1,\dots,s-1$,
subject to certain commutation relations.
Then,
once we identify $V[-\frac d2]$ and $V[\frac d2]$ (and fix bases),
we have \cite{Fed17,DSFV18}
$$
L(z)=-(-z)^{p_1}\kappa(D_1(z))
\,,
$$
where $L(z)\in\Mat_{r_1\times r_1} W(\mf{gl}_N,f)((z^{-1}))$ 
is the matrix constructed in Section \ref{sec:4}.
As a consequence, the homomorphism $Y(\mf{gl}_{r_1})\to W(\mf{gl}_N,f)$
described in Remark \ref{20170705:rem1}
is the restriction of the homomorphism \eqref{20170705:eq1}
to the subalgebra $Y(\mf{gl}_{r_1})\subset Y(\mf{gl}_r,\sigma)$
corresponding to $D_1(z)$.
(We thank the anonymous referee of \cite{DSKV17}
for raising this question.)
\end{remark}

%%%%%%%%%%%%%%%%%%%%%%%%%%%%%%%%%%%%%%%
\section{An example: rectangular nilpotent element}
\label{sec:7}

Let $N\geq2$ be an integer 
and consider the partition $(p,p,\dots,p)$ of $N$, consisting of $r$ equal parts of size $p$, 
so that $N=rp$.
Let $\mc I=\{(i,h)\in\mb Z_+^2\mid 1\leq i\leq r,1\leq h\leq p\}$, and consider the vector space
$$
V=\bigoplus_{(i,h)\in \mc I}\mb Fv_{(ih)}\cong\mb F^N
\,.
$$
The Lie algebra $\mf g=\mf{gl}(V)\cong\mf{gl}_N$ has a basis consisting of elementary matrices $e_{(i,h)(j,k)}$, where
$(i,h),(j,k)\in \mc I$ (and we denote by $E_{(i,h)(j,k)}$, where
$(i,h),(j,k)\in \mc I$, the same basis when viewed as an element of $\End(V)$).

For any $(i,h)\in \mc I$, we define $(i,h)'=(r+1-i,p+1-h)\in I$.
Moreover, we define $\epsilon_{(i,h)}\in\{\pm1\}$, $(i,h)\in \mc I$, as in one of the following two cases:
\begin{description}
\item[Case 1]
For every $N\geq2$, we let
\begin{equation}\label{20170711:eq1}
\epsilon_{(i,h)}=(-1)^{h+(i-1)p}\,,
\qquad (i,h)\in I
\,.
\end{equation}
\item[Case 2]
For $N=2n$, $n\geq1$, we let
\begin{equation}\label{20170711:eq2}
\epsilon_{(i,h)}=\left\{
\begin{array}{ll}
(-1)^{h+(i-1)p}\,,& 1\leq h+(i-1)p\leq n\,,
\\
(-1)^{1-h+(r+1-i)p}\,,& n+1\leq h+(i-1)p\leq N\,.
\end{array}
\right.
\end{equation}
\end{description}
We define a non-degenerate bilinear form on $V$ as follows:
\begin{equation}\label{20170711:eq3}
\langle v_{(i,h)}|v_{(j,k)}\rangle=-\epsilon_{(i,h)}\delta_{(i,h),(j,k)'}
\,,
\qquad
(i,h),(j,k)\in I
\,.
\end{equation}
It is immediate to check from \eqref{20170711:eq1} and \eqref{20170711:eq2} that we have
$$
\langle v|w\rangle=\epsilon\langle w|v\rangle
\,,\qquad
v,w\in V\,,
$$
where $\epsilon=1$ if we assume $\epsilon_{(i,h)}$ as in Case 2 or as in Case 1 for odd $N$,
and $\epsilon=-1$ if we assume $\epsilon_{(i,h)}$ as in Case 1 for even $N$.

Let $A^\dagger$ denote the adjoint of $A\in\End V$ with respect to \eqref{20170711:eq3}.
Explicitly, in terms of elementary matrices, it is given by:
\begin{equation}\label{eq:dagger}
(E_{(i,h)(j,k)})^\dagger=\epsilon_{(i,h)}\epsilon_{(j,k)}E_{(j,k)'(i,h)'}\,.
\end{equation}
Let 
$$
\mf g_N^\epsilon=\{A\in\End V\,|\,A^\dagger=-A\}=\left\{
\begin{array}{ll}
\mf{so}_N\,,& \epsilon=1\,,
\\
\mf{sp}_N\,,& \epsilon=-1\,.
\end{array}
\right.
$$
For $(i,h)$, $(j,k)\in \mc I$ we define
\begin{equation}\label{eq:F}
F_{(i,h),(j,k)}=E_{(i,h),(j,k)}-\epsilon_{(i,h)}\epsilon_{(j,k)}E_{(j,k)',(i,h)'}\,\,\big(=-F_{(i,h),(j,k)}^\dagger\big)
\,.
\end{equation}
The following commutation relations hold ($(i,h),(j,k),(\alpha,\beta),(\gamma,\delta)\in \mc I$):
\begin{align}\label{comm:BCD}
\begin{split}
&[F_{(i,h),(j,k)},F_{(\alpha,\beta),(\gamma,\delta)}]
=\delta_{(j,k),(\alpha,\beta)}F_{(i,h)(\gamma,\delta)}
-\delta_{(\gamma,\delta),(i,h)}F_{(\alpha,\beta),(j,k)}
\\
&-\epsilon_{(i,h)}\epsilon_{(j,k)}\delta_{(i,h)'(\alpha,\beta)}F_{(j,k)'(\gamma,\delta)}
+\epsilon_{(i,h)}\epsilon_{(j,k)}\delta_{(\gamma,\delta)(j,h)'}F_{(\alpha,\beta)(i,h)'}
\,.
\end{split}
\end{align}
By \eqref{eq:F} the following elements form a basis of $\mf g_N^\epsilon$
$$
\frac{1}{1+\delta_{(i,h),(j,k)'}}f_{(i,h),(j,k)}
:=\frac{1}{1+\delta_{(i,h),(j,k)'}}(e_{(i,h),(j,k)}-\epsilon_{(i,h)}\epsilon_{(j,k)}e_{(j,k)',(i,h)'})
\,,
$$
where $(i,h),(j,k)\in I$, and 
$$
I=\Bigg\{
\begin{array}{l}
\displaystyle{
\vphantom{\Big(}
\big\{(i,h),(j,k)\,\big|\, (1,1)\leq (i,h)\leq (r,p), (1,1)\leq (j,k)\leq (i,h)'\big\}
\,\,\text{ if }\,\, \epsilon=-1
} \\
\displaystyle{
\vphantom{\Big(}
\big\{(i,h),(j,k)\,\big|\, (1,1)\leq (i,h)\leq (r,p), (1,1)\leq (j,k)< (i,h)'\big\}
\,\,\text{ if }\,\, \epsilon=1
\,.}
\end{array}
$$
(We are ordering the indices $(i,h)\in\mc I$ lexicographically.)
Its dual basis, with respect to the trace form \eqref{20170317:eq1}, is 
$$
\big\{\frac12f_{(j,k),(i,h)}\mid (i,h),(j,k)\in I\big\}\,.
$$ 

To the partition $(p,\dots,p)$ we associate the element
$$
f=\sum_{i=1}^r\sum_{h=1}^{p-1}e_{(i,h+1),(i,h)}
\,,
$$
which is a nilpotent element of $\mf g_N^\epsilon$
under the restriction, for $\epsilon=1$, that $r$ is even if $p$ is even.
We can include $f$ in the following $\mf{sl}_2$-triple $\{e,h=2x,f\}\subset\mf g_N^\epsilon$,
where:
\begin{equation}\label{20170711:sl2tripleBCD}
x=\sum_{(i,h)\in\mc I}\frac12(p+1-2h)e_{(i,h),(i,h)}
\,,
\quad
e=
\sum_{i=1}^r\sum_{h=1}^{p-1}h(p-h)e_{(i,h),(i,h+1)}
\,.
\end{equation}
From equation \eqref{20170711:sl2tripleBCD} it follows that the largest $\ad x$-eigenvalue is $d=p-1$ and that
$$
V[\frac{d}{2}]=\bigoplus_{i=1}^r \mb F v_{(i,1)}
\,,
\qquad
V[-\frac{d}{2}]=\bigoplus_{i=1}^r \mb F v_{(i,p)}
\,.
$$
Let $v_i=v_{(i,1)}$, for $i=1,\dots,r$, and let us identify $V[-\frac d2]\stackrel{\sim}{\rightarrow}V[\frac d2]$
via $v_{(i,p)}\mapsto v_i$. Then, the pairing $\langle\cdot|\cdot\rangle^{\Psi_{\frac d2}\Pi_{-\frac d2}}$ associated to the maps
$\Psi_{\frac d2}:V[\frac d2]\hookrightarrow V$ and $\Pi_{-\frac d2}:V\twoheadrightarrow V[-\frac d2]$ (see \eqref{20170704:eq3}) gives a
non-degenerate
bilinear form on $V[\frac d2]$. Using equations \eqref{20170704:eq3} and \eqref{20170711:eq3}, in terms of the basis vectors of $V[\frac d2]$ it reads $(i,j=1,\dots,r$)
\begin{equation}\label{20170711:eq4}
\langle v_i|v_j\rangle^{\Psi_{\frac d2}\Pi_{\frac d2}}
:=\langle v_{(ip)}|v_{(j1)}\rangle=-\epsilon_{(i,p)}\delta_{(i,p)(j,1)'}
\,.
\end{equation}
By equations \eqref{20170711:eq3} and \eqref{20170711:eq1}-\eqref{20170711:eq2} we have
$$
\langle v_j|v_i\rangle^{\Psi_{\frac d2}\Pi_{\frac d2}}
=\phi\langle v_i|v_j\rangle^{\Psi_{\frac d2}\Pi_{\frac d2}}
\,,
\qquad
i,j=1,\dots,r
\,,
$$
where
$$
\phi=\epsilon(-1)^{p-1}
\,.
$$
By Remark \ref{20170705:rem1} we get an algebra homomorphism $\kappa: X(\mf g_r^\phi)\to W(\mf g_N^\epsilon,f)$
from the extended twisted Yangian of $\mf g_r^\phi$ to the $W$-algebra $W(\mf g_{N}^\epsilon,f)$.
This result was first proved by \cite{Br09}.

Finally, we want to give an explicit description of the Lax operator $L(z)\in W(\mf g_N^{\epsilon},f)((z^{-1}))\otimes \Hom(V[-\frac d2],\frac d2)$
defined in \eqref{eq:L}. By identifying $V[-\frac d2]\stackrel{\sim}{\rightarrow}V[\frac d2]$, we can view 
$L(z)$ as an element in
$\Mat_{r\times r}W(\mf g_N^{\epsilon},f)((z^{-1}))$.

Let us further identify 
$$%\begin{equation}\label{eq:rectangular-factors}
\Mat_{N\times N}\mb F
\simeq
\Mat_{p\times p}\mb F\otimes\Mat_{r\times r}\mb F
\,,
$$%\end{equation}
by mapping $E_{(ih),(jk)}\mapsto E_{hk}\otimes E_{ij}$.
Recalling the explicit expression of the shift matrix $D$ given in \eqref{20170623:eq12} and \eqref{20170623:eq13}, under this identification, we have
\begin{equation*}
\begin{split}
& \id_N\mapsto\id_{p}\otimes\id_{r}
\,\,,\,\,\,\,
D\mapsto \frac12\sum_{h=1}^{p}\left(r(1-h)+\epsilon\delta_{h\geq\frac{p}{2}+1}\right)E_{hh}\otimes \id_{r}
\,,\\
&
F\mapsto \sum_{k=1}^{p-1}E_{k+1,k}\otimes \id_{r}
\,\,,\,\,\,\,
\pi_{\leq\frac12}U\mapsto\sum_{i,j=1}^{r}\sum_{1\leq h\leq k\leq p}\frac1{2c_{(i,h)(j,k)}}f_{(jk),(ih)}E_{hk}\otimes E_{ij}
\,,
\end{split}
\end{equation*}
where $c_{(i,h),(j,k)}=1+\delta_{(i,h)(j,k)'}$, and we denote  (cf. \eqref{eq:dagger})
$f_{(i,h),(j,k)}=e_{(i,h),(j,k)}-\epsilon_{(i,h)}\epsilon_{(j,k)}e_{(j,k)',(i,h)'}$
for every $(i,h)$, $(j,k)\in\mc I$.

The $W$-algebra $W(\mf g_N^\epsilon,f)$ can be identified with a subalgebra of $U(\mf (g_{N}^\epsilon)_{\leq0})$.
By the formula for quasideterminant \eqref{eq:linalg7} and the identification $V[-\frac d2]\stackrel{\sim}{\rightarrow}V[\frac d2]$,
we have that
that $L(z)\in\Mat_{r\times r} U(\mf (g_{N}^\epsilon)_{\leq0})((z^{-1}))$ is defined by
(we use the shorthand notation 
$\tilde f_{(ih),(jk)}=
\frac{1}{2c_{(i,h),(j,k)}}f_{(ih),(jk)}+\frac12\delta_{(ih),(jk)}(r(1-h)+\epsilon\delta_{h\geq\frac{p}{2}+1}$)
\begin{equation}\label{eq:rectangular-quasidet}
L(z)
=
\Big|
(\id_{p}\otimes\id_{r})z
+\sum_{k=1}^{p-1}E_{k+1,k}\otimes\id_{r}
+\sum_{i,j=1}^{r}\sum_{1\leq h\leq k\leq p}\tilde f_{(jk),(ih)}E_{hk}\otimes E_{ij}
\Big|_{I_1J_1}\,,
\end{equation}
where $I_1=\sum_{i=1}^{r}E_{(i1),i}\in\Mat_{N\times r}\mb F$ and
$J_1=\sum_{i=1}^{r}E_{i,(ip)}\in\Mat_{r\times N}\mb F$\, and the quasideterminant \eqref{eq:rectangular-quasidet} can be computed
using the usual formula in \cite[Prop. 4.2]{DSKV16a}. 
As a result we get (see \cite[Sec.9.2]{DSKV17}), for $1\leq i,j\leq r$:
\begin{equation}\label{eq:rectangular-gener-explicit}
\begin{split}
&L_{ij}(z)
=
f_{(jp),(i1)}+
\sum_{s=1}^{p-1}(-1)^s
\sum_{i_1,\dots,i_s=1}^{r}
\sum_{2\leq h_1<\dots<h_s\leq p} \\
& (\delta_{i_1,i}\delta_{h_1-1,1}z+\tilde f_{(i_1,h_1-1),(i1)})
(\delta_{i_2,i_1}\delta_{h_2-1,h_1}z+\tilde f_{(i_2,h_2-1),(i_1h_1)})
\dots \\
& \dots
(\delta_{i_s,i_{s-1}}\delta_{h_s-1,h_{s-1}}z+\tilde f_{(i_s,h_s-1),(i_{s-1},h_{s-1})})
(\delta_{i_s,j}\delta_{p,h_s}z+\tilde f_{(jp),(i_sh_s)})
\,.
\end{split}
\end{equation}
The RHS of \eqref{eq:rectangular-gener-explicit} is a polynomial in $z$, hence it uniquely defines elements
$w_{ji;k}\in W(\mf{g}_N^\epsilon,f)\subset U((\mf g_N^\epsilon)_{\leq0})$, 
$1\leq i,j\leq r$, $0\leq k\leq p-1$, such that
$$
L(z)
=
-\id_{r}(-z)^{p}
+\sum_{k=0}^{p-1}W_k(-z)^k
\,\,,\,\,\,\,
W_k=\big(w_{ji;k}\big)_{i,j=1}^{r}\in\Mat_{r\times r}\!\!\! W(\mf{g}_N^\epsilon,f)
\,.
$$
These elements $w_{ij;k}$ 
generate the $W$-algebra  $W(\mf{g}_N^\epsilon,f)$.
Indeed, by expanding in powers of $z$ the RHS of \eqref{eq:rectangular-gener-explicit}
it is not hard to check that the elements $\gr w_{ij;k}$ span $\mf g^f$ (here, $\gr$ is computed with respect to the Kazhdan filtration
\eqref{0312:eq1}). 
Hence, the homomorphism
$\kappa: X(\mf g_r^\phi)\to W(\mf g_N^\epsilon,f)$ is surjective and we get an isomorphism
$W(\mf{g}_N^\epsilon,f)\cong X(\mf g_r^\phi)/\ker\kappa$,
as showed in \cite{Br09}.

\end{document}